\newtheorem{assumption}{Assumption}
\def\as#1{{{\color{black}#1}}}
\def\E{\mathbb{E}}
\def\R{\mathbb{R}}
\def\1{{\bf 1}}
\def\bA{{\mathbf A}}
\def\bB{{\mathbf B}}
\def\S{\mathcal{S}}
\def\V{{\mathcal V}}
\def\G{{\mathcal G}}
\def\H{{\mathcal H}}
\def\Ed{{\mathcal E}}
\def\O{{\mathcal O}}
\def\bM{{\mathbf M}}
\def\bP{{\mathbf P}}
\def\bx{{\mathbf x}}
\def\bbw{\bar{\mathbf w}}
\def\bw{{\mathbf w}}
\def\bz{{\mathbf z}}
\def\bg{{\mathbf g}}
\def\bu{{\mathbf u}}
\def\bv{{\mathbf v}}
\def\by{{\mathbf y}}
\def\br{{\mathbf r}}
\def\be{\boldsymbol{\varepsilon}}
\def\bbe{\bar \be}
\def\hbg{\hat{\mathbf g}}
\def\bDelta{{\boldsymbol \Delta}}
\def\bchi{{\boldsymbol \chi}}
\def\bpsi{{\boldsymbol \psi}}
\def\bphi{{\boldsymbol \phi}}
\def\etab{{\boldsymbol \eta}}
\begin{document}

\title{Robust Asynchronous Stochastic Gradient-Push:\\
	Asymptotically Optimal and Network-Independent Performance for Strongly Convex Functions}

\author{\name Artin Spiridonoff \email artin@bu.edu \\
       \addr Division of Systems Engineering\\
       Boston University\\
       Boston, MA 02215, USA
       \AND
       \name Alex Olshevsky \email alexols@bu.edu \\
       \addr Department Electrical and Computer Engineering\\
       Boston University\\
       Boston, MA 02215, USA
       \AND
       \name Ioannis Ch. Paschalidis \email yannisp@bu.edu \\
       \addr Department Electrical and Computer Engineering\\
       Boston University\\
       Boston, MA 02215, USA
       }
   
\maketitle

\begin{abstract}
We consider the standard model of distributed optimization of a  sum of functions $F(\bz) = \sum_{i=1}^n f_i(\bz)$, where node $i$ in a network holds the function $f_i(\bz)$. We allow for a harsh network model characterized by asynchronous updates, message delays, unpredictable message losses, and directed communication among nodes.  In this setting, we analyze a modification of the Gradient-Push method for distributed optimization, assuming that \begin{enumerate*}[label=(\roman*)] \item node $i$ is capable of generating gradients of its function $f_i(\bz)$ corrupted by zero-mean bounded-support additive noise at each step, \item $F(\bz)$ is strongly convex, and \item each $f_i(\bz)$ has Lipschitz gradients. We show that our proposed  method asymptotically performs  as well as the best bounds on centralized  gradient descent that takes steps in the direction of the  sum of the noisy gradients of all the functions $f_1(\bz), \ldots, f_n(\bz)$ at each step.
\end{enumerate*}
\end{abstract}

\begin{keywords}
  distributed optimization, stochastic gradient descent.
\end{keywords}

\section{Introduction}

Distributed systems have attracted much  attention in recent years due to their \as{many applications such as large scale machine learning (e.g., in the healthcare domain, \cite{brisimi2018federated}), control (e.g., maneuvering of autonomous vehicles, \cite{peng2017distributed}), sensor networks (e.g., coverage control, \cite{he2015full})} and advantages over centralized systems, such as scalability and robustness to faults. In a network comprised of multiple agents (e.g., data centers, sensors, vehicles, smart phones, or various IoT devices) engaged in data collection, it is sometimes impractical to collect all the information in one place. Consequently, distributed optimization techniques are currently being explored for potential use in a variety of estimation and learning problems over networks. 

This paper considers the separable optimization problem 
\begin{equation} \label{mainmodel} \min_{\bz \in \R^d} F(\bz) := \sum_{i=1}^{n}f_i(\bz), \end{equation}  where the function $f_i: \R^d \rightarrow \R$ is held only by agent $i$ in the network. We assume the agents communicate through a directed communication network, with each agent able to send messages to its out-neighbors. The agents seek to collaboratively agree on a minimizer to the global function $F(\bz)$. 

This fairly simple problem formulation is capable of capturing a variety of scenarios in estimation and learning. Informally, $\bz$ is often taken to parameterize a model, and $f_i(\bz)$ is a loss function measuring how well $\bz$ matches the data held by agent $i$. Agreeing on a minimizer of $F(\bz)$ means agreeing on a model that best explains all the data throughout the network -- and the challenge is to do this in a distributed manner, avoiding techniques such as flooding which requires every node to learn and store all the data throughout the network. For more details, we refer the reader to the recent survey by \cite{nedic2018network}. 

In this work, we will consider a fairly harsh network environment, including message losses, delays, asynchronous updates, and directed communication. The function $F(\bz)$ will be assumed to be strongly convex with the individual functions $f_i(\bz)$ having a Lipschitz continuous gradient. We will also assume that, at every time step, node $i$ can obtain a noisy gradient of its function $f_i(\bz)$. Our goal will be to investigate to what extent distributed methods can  remain competitive with their centralized counterparts in spite of these obstacles. 

\subsection{Literature review} Research on models of distributed optimization dates back to the 1980s, see \cite{tsitsiklis1986distributed}. The separable model of \eqref{mainmodel} was first formally analyzed in \cite{nedic2009distributed}, where performance guarantees on a fixed-stepsize subgradient method were obtained. The literature on the subject has exploded since, and we review here only the papers closely related to our work. We begin by discussing works that have focused on the effect of harsh network conditions. 

A number of recent papers have studied asynchronicity in the context of distributed optimization. It has been noted that asynchronous algorithms are often preferred to synchronous ones, due to the difficulty of perfectly coordinating all the agents in the network, e.g., due to clock drift.
\as{Papers by \cite{recht2011hogwild, li2014scaling, agarwal2011distributed, lian2015asynchronous} and \cite{feyzmahdavian2016asynchronous} study asynchronous parallel optimization methods in which different processors have access to a shared memory or parameter server.
\cite{recht2011hogwild} present a scheme called HOGWILD!, in which processors have access to the same shared memory with the possibility of overwriting each other's work.
\cite{li2014scaling} proposes a parameter server framework for distributed machine learning.
\cite{agarwal2011distributed} analyze the convergence of gradient-based optimization algorithms whose updates depend on delayed stochastic gradient information due to asynchrony.
\cite{lian2015asynchronous} improve on the earlier work by \cite{agarwal2011distributed}, and study two asynchronous parallel implementations of Stochastic Gradient (SG) for nonconvex optimization; establishing an $\O_k(1/\sqrt{k}$) convergence rate for both algorithms.
\cite{feyzmahdavian2016asynchronous} propose an asynchronous
mini-batch algorithm that eliminates idle waiting and allows workers to run at their maximal update rates.}

\as{The works mentioned above consider a \textit{centralized} network topology, i.e., there is a central node (parameter server or shared memory) connected to all the other nodes. On the other hand, in a \textit{decentralized} setting, nodes communicate with each other over a connected network without depending on a central node (see Figure \ref{fig: cen vs dec}). This setting reduces the communication load on the central node, is not vulnerable to failures of that node, and is more easily scalable.}

\begin{figure}
	\centering
	\begin{subfigure}[t]{0.40\textwidth}
		\includegraphics[width=\textwidth]{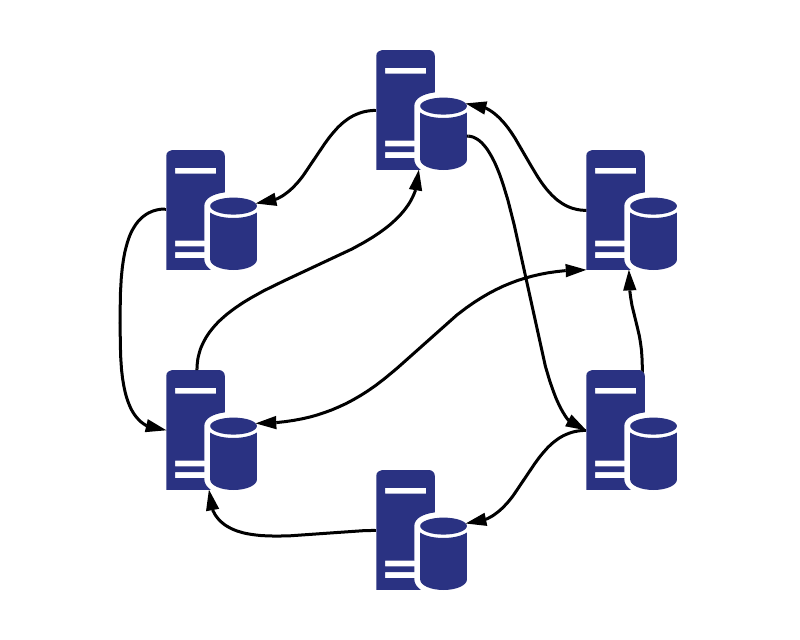}
		\caption{Decentralized network topology.}
	\end{subfigure}
	\hspace{0.05\textwidth}
	\begin{subfigure}[t]{0.40\textwidth}
		\includegraphics[width=\textwidth]{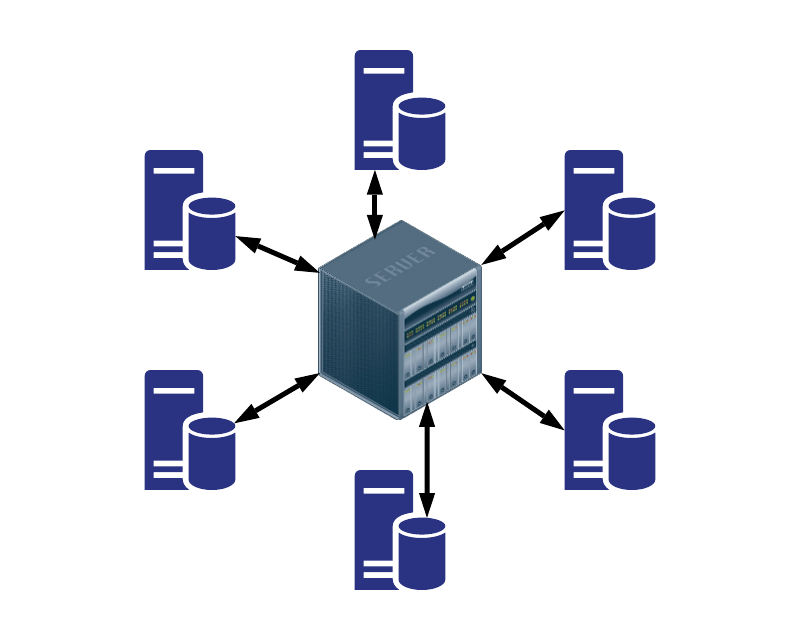}
		\caption{Centralized network topology.}
	\end{subfigure}
	\caption{Different network topologies.}
	\label{fig: cen vs dec}
\end{figure}

For analysis of how \as{decentralized} asynchronous methods perform we refer the reader to \cite{mansoori2017superlinearly, tsitsiklis1986distributed, srivastava2011distributed, assran2018asynchronous, nedic2011asynchronous, wu2018decentralized} and \cite{tian2018asy}. We note that of these works only \cite{tian2018asy} is able to obtain an algorithm which agrees on a global minimizer of \eqref{mainmodel} with non-random asynchronicity, under the assumptions of strong convexity, noiseless gradients and possible delays. On the other hand, the papers \cite{nedic2011asynchronous} and \cite{wu2018decentralized} obtain convergence in this situation under assumptions of natural randomness in the algorithm: the former assumes randomly failing links while the latter assumes that nodes make updates in random order.

The study of distributed separable optimization over directed graphs was initiated in \cite{tsianos2012push}, where a distributed approach based on dual averaging with convex functions over a fixed graph was proposed and shown to converge at an $\O_k(1/\sqrt{k})$ rate. Some numerical results for such methods were reported in \cite{tsianos2012consensus}. In \cite{nedic2015distributed}, a method based on plain gradient descent converging at a rate of $\O_k((\ln k)/\sqrt{k})$ was proposed over time-varying graphs. This was improved in \cite{nedic2016stochastic} to $\O_k((\ln k)/k)$ for strongly convex functions with noisy gradient samples.
More recent works on optimization over directed graphs are \cite{akbari2017distributed}, which considered online convex optimization in this setting, and \cite{assran2018asynchronous}, which considered combining directed graphs with delays and asynchronicity. The main tool for distributed optimization is the so-called ``push sum'' method introduced in  \cite{kempe2003gossip}, which is widely used to design communication and optimization schemes over directed graphs. More recent references are \cite{benezit2010weighted, hadjicostis2016robust}, which provide a more modern and general analysis of this method, and the most comprehensive reference on the subject is the recent monograph by \cite{hadjicostis2018distributed}. We also mention \cite{xi2017dextra, xi2018add, nedic2017achieving}, where an approach based on push-sum was explored. A parallel line of work in this setting based on the the ADMM model, where updates are allowed to include a local minimization step, was explored in \cite{brisimi2018federated, chang2016asynchronousa, chang2016asynchronousb} and \cite{hong2017distributed}.

The reason directed graphs present a problem is because much of distributed optimization relies on the primitive of ``multiplication by a doubly stochastic matrix:" given that each node of a network holds a number $x_i$, the network needs to compute $y_i$, where $\bx = (x_1,\ldots,x_n)^{\top}$, $\by = (y_1,\ldots,y_n)^{\top}$ and $\by=\bf W \bx$ for some doubly stochastic matrix $\bf W$ with positive spectral gap. This is pretty easy to accomplish over undirected graphs (see \cite{nedic2018network}) but not immediate over directed graphs. A parallel line of research focuses on distributed methods for constructing such doubly stochastic matrices over directed graphs -- we refer the reader to \cite{dominguez2013distributed, gharesifard2012distributed, dominguez2014convergence}. Unfortunately, to the authors' best knowledge, no explicit and favorable convergence time guarantees are known for this procedure.  Another line of work (\cite{xi2017distributed}) takes a similar approach, based on construction of a doubly stochastic matrix with positive spectral gap after the introduction of auxiliary states. 
\as{Among works with undirected graphs, \cite{seaman2017optimal} derived the optimal convergence rates for smooth and strongly convex functions and introduced the multi-step dual accelerated (MSDA) algorithm with optimal linear convergence rate in the deterministic case.}

Dealing with message losses has always been a challenging problem for multi-agent optimization protocols. Recently, \cite{hadjicostis2016robust} resolved this issue rather elegantly for the problem of distributed average computation  by having nodes exchange certain running sums. It was shown in \cite{hadjicostis2016robust} that the introduction of these running sums is equivalent to a lossless algorithm on a slightly modified graph. We also refer the reader to the follow-up papers by \cite{su2016non, su2016fault, su2017reaching}. We will use the same approach in this work to deal with message losses. 

In many applications, calculating the exact gradients can be computationally very expensive or impossible \cite{lan2017communication}. In one possible scenario, nodes are sensors that collect measurements at every step, which naturally corrupts all the data with noise. Alternatively, communication between agents may insert noise into information transmitted between them. Finally,  when $f_i(\bz)$ measures the fit of a model parameterized by the vector $\bz$ to the data of agent $i$, it may be  efficient for agent $i$ to randomly select a subset of its data and compute an estimate of the gradient based on only those data points (\cite{alpcan2009distributed}). Motivated by these considerations, a literature has arisen studying the effects of stochasticity in the gradients. For example, \cite{srivastava2011distributed} showed convergence of an asynchronous algorithm for constrained distributed stochastic optimization, under the presence of local noisy communication in a random communication network. In 
\cite{pu2018distributed},  two distributed stochastic gradient  methods were introduced, and their convergence to a neighborhood of the global minimum (under constant step-size) and to the global minimum (under diminishing stepsize) was analyzed.
\as{In work by \cite{sirb2016consensus}, convergence of asynchronous decentralized optimization using delayed stochastic gradients has been shown.}

The algorithms we will study here for stochastic gradient descent are based on the standard ``consensus+gradient descent" framework: nodes will take steps in the direction of their gradients and then ``reconcile'' these steps by moving in the directions of an average of their neighbors in the graph. We refer the reader to \cite{nedic2018network,yuan2016convergence}, for a more recent and simplified analysis of such methods. It is also possible to take a more modern approach, pioneered in \cite{shi2015extra}, of using the past history to make updates; such schemes have been shown to achieve  superior performance in recent years (see \cite{shi2015extra, sun2016distributed, oreshkin2010optimization, nedic2017achieving, xi2017dextra, xi2018add, qu2017harnessing, xu2015augmented,qu2017accelerated, di2016next}); we refer the reader to \cite{pu2018distributed} which took this approach. 

One of our main concerns in this paper is to develop decentralized optimization methods which perform as well as their centralized counterparts. Specifically, we will compare the performance of a distributed method for \eqref{mainmodel} on a network of $n$ nodes with the performance of a  centralized method which, at every step, can query all $n$ gradients of the functions $f_1(\bz), \ldots, f_n(\bz)$. Since the distributed algorithm gets noise-corrupted gradients, so should the centralized method. Thus, the natural approach is to compare the distributed method to centralized gradient descent which moves in the direction of the sum of the gradients of $f_1(\bz), \ldots, f_n(\bz)$. This method of comparison keeps the ``computational power'' of the two nodes identical. 

Traditionally, the bounds derived on distributed methods were considerably worse than those derived for centralized methods. For example, the papers by \cite{nedic2015distributed, nedic2016stochastic} had bounds for distributed optimization over directed graphs that were worse than the comparable centralized method (in terms of rate of error decay) by a multiplicative factor that, in the worst case, could be as large as $n^{\O(n)}$. This is typical over directed graphs, though better results are possible over undirected graphs. For example, in \cite{olshevsky2017linear}, in the model of noiseless, undelayed, synchronous communication over an undirected graph, a distributed subgradient method was proposed whose performance, relative to a centralized method with the same computational power, was worse by a multiplicative factor of $\sqrt{n}$. 

The breakthrough papers by \as{\cite{chen2015learning};} \cite{pu2017flocking, morral2017success}, were the first to address this gap. \as{These} papers studied the model where gradients are corrupted by noise, which we also consider in this paper. \as{\cite{chen2015learning} examined the mean-squared stability and convergence of distributed strategies with fixed step-size over graphs and showed the same performance level as that of a centralized strategy, in the small step-size regime.} In \cite{pu2017flocking} it was shown that, for a certain stochastic differential equation paralleling network gradient descent, the performance of centralized and distributed methods were comparable. In \cite{morral2017success}, it was proved, for the first time, that  distributed gradient descent with an appropriately chosen step-size, asymptotically performs similarly to a centralized method that takes steps in the direction of the sum of the noisy gradients, \as{assuming iterates will remain bounded almost surely}. This was the first analysis of a decentralized method for computing the \as{\textit{optimal}} solution with performance bounds matching its centralized counterpart.

Both \cite{pu2017flocking} and \cite{morral2017success}  were over fixed, undirected graphs with no message loss or delays or asynchronicity. As shown in the  paper by \cite{morral2012distributed}, this turns out to be a natural consequence of the analysis of those methods. Indeed, on a technical level, the advantage of working over undirected graphs is that they allow for easy distributed multiplication by doubly-stochastic matrices; it was shown in \cite{morral2012distributed} that if this property holds only in expectation --  that is, if the network nodes can multiply by random stochastic matrices that are only doubly stochastic in expectation --  distributed gradient descent will not perform comparably to its centralized counterpart. 

\as{In parallel to this work, and in order to reduce communication bottlenecks, \cite{koloskova2019decentralized} propose a Decentralized SGD with communication compression that can achieve the centralized baseline convergence rate, up to a constant factor.}
\as{When the objective functions are smooth but not necessarily convex, \cite{lian2017can} show that Decentralized Parallel Stochastic Gradient Descent (D-PSGD)  can asymptotically perform comparably to Centralized PSGD in total computational complexity. However, they argue that D-PSGD requires much less communication cost on the busiest node and hence, can outperform C-PSGD in certain communication regimes.}
\as{Again, both \cite{koloskova2019decentralized} and \cite{lian2017can} are over fixed undirected graphs, without delays, link failures or asynchronicity. The follow-up work by \cite{lian2017asynchronous}, extends the D-PSGD to the asynchronous case.}

\subsection{Our Contribution} 

We propose an algorithm which we call {\em Robust Asynchronous Stochastic Gradient Push (RASGP)} for distributed optimization from noisy gradient samples {\em over directed graphs with message losses, delays, and asynchronous updates}. We will assume gradients are corrupted with additive noise represented by independent random variables, with bounded support, and with finite variance at node $i$ denoted by $\sigma_i^2$. Our main result is that the RASGP performs as well as the best bounds on centralized gradient descent that moves in the direction of the sum of noisy gradients of $f_1(\bz), \ldots, f_n(\bz)$. Our results also hold if the underlying graphs are time-varying as long as there are no message losses. We give a  brief technical overview of this result next. 

We will assume that each function $f_i(\bz)$ is $\mu_i$-strongly convex with $L_i$-Lipschitz gradient, where $\sum_i \mu_i > 0$ and $L_i > 0$, $i =1, \ldots, n$.  The RASGP will have every node maintain an estimate of the optimal solution which will be updated from iteration to iteration; we will use $\bz_i(k)$ to denote the value of this estimate held by node $i$ at iteration $k$. We will show that, for each node $i=1, \ldots, n$, 
\begin{equation} \label{shortmainresult} \E \left[\Vert\bz_i(k) - \bz^*\Vert_2^2 \right] = \frac{L_u \sum_{i=1}^n \sigma_i^2}{k (\sum_{i=1}^n \mu_i)^2} + \O_k \left( \frac{1}{k^{1.5}} \right), \end{equation}
where $\bz^*:= \arg\min F(\bz)$ and $L_u$ is the \emph{degree of asynchronicity}, defined as the maximum number of iterations between two consecutive updates of any agent.
The leading term matches the best bounds for (centralized)  gradient descent that takes steps in the direction of the sum of the noisy gradients of $f_1(\bz), \ldots, f_n(\bz)$, every $k/L_u$ iterations  (see \cite{nemirovski2009robust, rakhlin2012making}). Asymptotically, the performance of the RASGP is network independent: indeed, the only effect of the network or the number of nodes is on the constant factor within the $\O_k\left(1/k^{1.5}\right)$ term above. The asymptotic scaling as $\O_k(1/k)$ is optimal in this setting (\cite{rakhlin2012making}). 

Consider the case when all the functions are identical, i.e., $f_1(\bz) = \cdots = f_n(\bz)$, and $L_u = 1$. In this case, letting $\mu = \mu_i$ and $\sigma = \sigma_i$, we have that for each $i=1, \ldots, n$,   \eqref{shortmainresult} reduces to 
\[ \E \left[\Vert\bz_i(k) - \bz^*\Vert_2^2 \right] = \frac{\sigma^2/n}{ k \mu^2} + \O_k \left( \frac{1}{k^{1.5}} \right). \] In other words, asymptotically we get the  variance reduction of  a centralized method that simply averages the $n$ noisy gradients at each step. 

The implication of this result is that one can get the benefit of having $n$ independent processors computing noisy gradients in spite of all the usual problems associated with communications over a network (i.e., message losses, latency, asynchronous updates, one-way communication). Of course, the caveat is that one must wait sufficiently long for the asymptotic decay to ``kick in,'' i.e., for the second term on the right-hand side of \eqref{shortmainresult} to become negligible compared to the first. We leave the analysis of the size of this transient period to future work and note here that it {\em will} depend on the network and the number of nodes.\footnote{It goes without saying that no analysis of distributed optimization can be wholly  independent of the network or the number of nodes. Indeed, in a network of $n$ nodes, the diameter can be as large as $n-1$, which means that, in the worst case, no bounds on global performance can be obtained during the first $n-1$ steps of any algorithm.} 

The RASGP is a variation on the usual distributed gradient descent where nodes mix consensus steps with steps in the direction of their own gradient, combined with a new step-size trick to deal with asynchrony. It is presented as Algorithm \ref{alg: optimization} in Section \ref{sec: optimization}. For a formal statement of the results presented above, we refer the reader to Theorem \ref{The: Opt conv} in the body of the paper.

We briefly mention two caveats. The first is that implementation of the RASGP requires each node to use the quantity $\sum_{i=1}^n \mu_i/n$ in  setting its local stepsize. This is not a problem in the setting when all functions are the same, but, otherwise, $\sum_{i=1}^n \mu_i/n$ is a global quantity not immediately available to each node. Assuming that node $i$ knows $\mu_i$, one possibility is to use average consensus to compute this quantity in a distributed manner before running the RASGP (for example using the algorithm described in  Section \ref{sec: push-sum} of this paper). The second caveat is that, like all algorithms based on the push-sum method, the RASGP requires each node to know its out-degree in the communication graph. 

\subsection{Organization of this paper} 

We conclude this Introduction with  Section \ref{sec: problem formulation}, which describes the basic notation we will use throughout the remainder of the paper. Section \ref{sec: push-sum} does not deal directly with the distributed optimization problem we have discussed, but rather introduces the problem of computing the average in the fairly harsh network setting we will consider in this paper. This is an intermediate problem we need to analyze on the way to our main result. Section \ref{sec: optimization} provides the RASGP algorithm for distributed optimization, and then states and proves our main result, namely the asymptotically network-independent and optimal convergence rate. Results from numerical simulations of our algorithm to illustrate its performance are provided in Section \ref{sec: numerical}, followed by conclusions in Section \ref{sec: conclusion}.

\subsection{Notations and definitions} \label{sec: problem formulation}
We assume there are $n$ agents $\V=\{1, \ldots, n\}$, communicating through a fixed directed graph $\G=(\V,\Ed)$, where $\Ed$ is the set of directed arcs. We assume $\G$ does not have self-loops and is strongly connected.

For a matrix $\mathbf{A}$, we will use $A_{ij}$ to denote its $(i,j)$th entry. Similarly, $v_i$ and $[v]_i$ will denote the $i$th entry of a vector $\bv$.
A matrix is called \textit{stochastic} if it is non-negative and the sum of the elements of each row equals to one. A matrix is \textit{column stochastic} if its transpose is stochastic.
To a non-negative matrix $\mathbf{A} \in \R^{n\times n}$ we associate a directed graph $\mathcal{G}_\mathbf{A}$ with vertex set $\mathcal{V}_{\mathbf{A}}=\{1,2,\ldots,n\}$ and edge set $\mathcal{E}_\mathbf{A}=\{(i,j)\vert  A_{ji}>0 \}$.  In general, such a graph might contain self-loops. Intuitively, this graph corresponds to the information flow in the update  $\bx(k+1) = \mathbf{A} \bx(k)$; indeed, $(i,j) \in \mathcal{E}_\mathbf{A}$ if the $j$th coordinate of $\bx(k+1)$ depends on the $i$th coordinate of $\bx(k)$ in this update. 

Given a sequence of matrices $\mathbf{A}(0),\mathbf{A}(1),\mathbf{A}(2),\ldots$, we denote by $\mathbf{A}^{k_2:k_1},k_2\geq k_1$, the product of elements $k_1$ to $k_2$ of the sequence, inclusive, in the following order:
\begin{displaymath}
\mathbf{A}^{k_2:k_1}=\mathbf{A}(k_2)\mathbf{A}(k_2-1)\cdots \mathbf{A}(k_1).
\end{displaymath}
Moreover, $\mathbf{A}^{k:k}=\mathbf{A}(k)$.

Node $i$ is an \textit{in-neighbor} of node $j$, if there is a directed link from $i$ to $j$. Hence, $j$ would be an \textit{out-neighbor} of node $i$. We denote the set of in-neighbors and out-neighbors of node $i$ by $N_i^{-}$ and $N_i^{+}$, respectively. Moreover, we denote the number of in-neighbors and out-neighbors of node $i$ with $d_i^{-}$ and $d_i^{+}$, as its \textit{in-degree} and \textit{out-degree}, respectively.

By $x_{\min}$ and $x_{\max}$ we denote $\min_i x_i$ and $\max_i x_i$ respectively, over all possible indices unless mentioned otherwise. We denote a $n \times 1$ column vector of all ones or zeros by $\mathbf{1}_n$ and $\mathbf{0}_n$, respectively. We will remove the subscript when the size is clear from the context. 

Let $\bv \in \R^d$ be a vector. We denote by $\bv^-\in \R^d$ a vector of  the same length such that
\begin{align*}
	v_i^-=\begin{cases}
		1/v_i, \qquad &\text{if } v_i\neq 0,\\
		0,\qquad &\text{if } v_i= 0.
	\end{cases}
\end{align*}

For all the algorithms we describe, we sometimes use the notion of \textit{mass} to denote the value an agent holds, sends or receives. With that in mind, we can think of a value being sent from one node, as a mass being transferred. 

We use $\Vert.\Vert_p$ to denote the $l_p$-norm of a vector. We sometimes drop the subscript when referring to the Euclidean $l_2$ norm.

\section{Push-sum with delays and link failures}\label{sec: push-sum}
In this section we introduce the Robust Asynchronous Push-Sum algorithm (RAPS) for distributed average computation and prove its exponential convergence.  Convergence results proved for this algorithm will be used later when we turn to distributed optimization. The algorithm relies heavily on ideas from \cite{hadjicostis2016robust} to deal with message losses, delays, and asynchrony. The conference version of this paper  \cite{olshevsky2018fully} developed RAPS for the delay-free case, and this section may be viewed as an extension of that work.  

Pseudocode for the algorithm is given in the box for Algorithm \ref{alg: raps}.  We begin by outlining the operation of the algorithm. Our goal in this section is to compute the average of vectors, one held by each node in the network, in a distributed manner. However, since the RAPS algorithm acts separately in each component, we may, without loss of generality, assume that we want to average scalars rather than vectors. The scalar held by node $i$ will be denoted by $x_i(0)$.

\as{Without loss of generality, we define an iteration by descretizing time into time slots indexed by $k=0,1,2,\ldots$. We assume that during each time slot every agent makes at most one update and processes messages sent in previous time slots.}

In the setting of no message losses, no delays, no asynchrony, and a fixed, regular, undirected communication graph, the RAPS can be shown to be equivalent to the much simpler iteration 
\[ \bx(t+1) = {\bf W} \bx(t), \] where $\bf W$ is an irreducible, doubly stochastic matrix with positive diagonal; standard Markov chain theory implies that $x_i(t) \rightarrow (1/n)\sum_{i=1}^n x_i(t)$ in this setting. RAPS does essentially the same linear update, but with a considerable amount of modifications. In particular, we use  the central idea of the classic push-sum method (\cite{kempe2003gossip}) to deal with directed communication, which suggests to have a separate update equation for the $y$-variables, which informs us how we should rescale the $x$-variables; as well as the central idea of \cite{hadjicostis2018distributed}, which is to repeatedly broadcast sums of previous messages to provide robustness against message loss. \as{While the algorithm in \cite{hadjicostis2018distributed} handles message losses in a synchronous setting, RAPS can handle delays as well as asynchronicity.}

\as{Before getting into details, let us provide a simple intuition behind the RAPS algorithm. Each agent $i$ holds a value (mass) $x_i$ and $y_i$. At the beginning of every iteration, $i$ wants to split its mass between itself and its out-neighbors $j \in N_i^+$. However, to handle message losses, it sends the accumulated $x$ and $y$ mass (running sums which we denote by  $\phi_i^x$ and $\phi_i^y$), that $i$ wants to transfer to each of its neighbors, from the start of the algorithm.
Therefore, when a neighbor $j$ receives a new accumulated mass from $i$, it stores it at $\rho_{ji}^*$ and by subtracting the previous accumulated mass $\rho_{ji}$ it had received from $i$, $j$ obtains all the mass that $i$ has been trying to send since its last successful communication. Then, $j$ updates its $x$ and $y$ mass by adding the new received masses, and finally, updates its estimate of the average to $x/y$.
To handle delays and asynchronicity, timestamps $\kappa_i$ are attached to messages outgoing from $i$.}

\begin{algorithm}[h]
	\caption{Robust Asynchronous Push-Sum (RAPS)}
	\begin{algorithmic}[1]\label{alg: raps}
		\STATE Initialize the algorithm with $\by(0)=\mathbf{1}$, $\phi_i(0)=0$,  $\forall i\in\{1,\ldots,n\}$ and $\rho_{ij}(0)=0$, $\kappa_{ij}(0)=0$, $\forall (j,i)\in \Ed$.
		\STATE At every iteration $k=0,1,2,\ldots$, for every node $i$:
		\IF{node $i$ wakes up}\label{line3_alg1}
		\STATE $\kappa_i \leftarrow k$;\label{line4_alg1}
		\STATE $\phi_i^x \leftarrow \phi_i^x +\frac{x_i}{d_i^{+}+1}$,
		$\phi_i^y \leftarrow \phi_i^y +\frac{y_i}{d_i^{+}+1}$;
		\STATE $x_i \leftarrow \frac{x_i}{d_i^{+}+1}$,
		$y_i \leftarrow \frac{y_i}{d_i^{+}+1}$;
		\STATE Node $i$ broadcasts $(\phi_i^x,\phi_i^y,\kappa_i)$ to its out-neighbors in $N_i^{+}$.
		\STATE \textbf{Processing the received messages}
		\FOR{$(\phi_j^x,\phi_j^y,\kappa_j')$ in the inbox}
		\IF{$\kappa_j'>\kappa_{ij}$} \label{discard}
		\STATE $\rho_{ij}^{*x} \leftarrow \phi_j^x$,
		$\rho_{ij}^{*y} \leftarrow \phi_j^y$;
		\STATE $\kappa_{ij} \leftarrow \kappa_j'$;
		\ENDIF
		\ENDFOR
		\STATE $x_i \leftarrow x_i + \sum_{j\in N_i^{-}}\left(\rho_{ij}^{*x}-\rho_{ij}^x \right)$,
		$y_i \leftarrow y_i + \sum_{j\in N_i^{-}}\left(\rho_{ij}^{*y}-\rho_{ij}^y \right)$;
		\STATE $\rho_{ij}^x \leftarrow \rho_{ij}^{*x}$,
		$\rho_{ij}^y \leftarrow \rho_{ij}^{*y}$,\label{rhoup}
		\STATE $z_i \leftarrow \frac{x_i}{y_i}$; \label{line17_alg1}
		\ENDIF
		\STATE Other variables remain unchanged.
	\end{algorithmic}
\end{algorithm}

The pseudocode for the algorithm may appear complicated at first glance; this is because of the considerable complexity required to deal with directed communications, message losses, delays, and asynchrony.

We next describe the algorithm in \as{more detail}. First, in the course of executing the algorithm, every agent $i$ maintains scalar variables $x_i$, $y_i$, $z_i$, $\phi^x_i$, $\phi^y_i$, $\kappa_i$, $\rho_{ij}^x$, $\rho_{ij}^y$ and $\kappa_{ij}$ for $(j,i)\in \Ed$. The variables $x_i$ and $y_i$ have the same evolution, \as{however $y_i$ is initialized as $1$}. Therefore, to save space in describing and analyzing the algorithm, we will use the symbol $\theta$, when a statement holds for both $x$ and $y$. Similarly, when a statement is the same for both variables $x$ and $y$, we will remove the superscripts $x$ or $y$.
For example, the initialization $\rho_{ji}(0)=0$ in the beginning of the algorithm means both $\rho_{ji}^x(0)=0$ and $\rho_{ji}^y(0)=0$.

We briefly mention the intuitive meaning of the various variables. The number $z_i$ represents node $i$'s estimate of the initial average. The counter $\phi^{\theta}_i(k)$ is the total $\theta$-value sent by $i$ to each of its neighbors from time $0$ to $k-1$. Similarly, $\rho^{\theta}_{ij}(k)$ is the total $\theta$ value that $i$ has received from $j$ up to time $k-1$. 
The integer $\kappa_i$ is a timestamp that $i$ attaches to its messages, and the number $\kappa_{ij}$ tracks the latest timestamp $i$ has received from $j$.

To obtain an intuition for how the algorithm uses the counters $\phi^{\theta}_i(k)$ and $\rho^{\theta}_{ij}(k)$, note that, in line 15 of the algorithm,  node $i$ effectively figures out the last $\theta$ value sent to it by each of its in-neighbors $j$, by looking at the increment to the $\rho^{\theta}_{ij}$.  This might seem needlessly involved, but,  the underlying reason is that this approach introduces robustness to message losses.

We next describe in words what the pseudocode above does.  At every iteration $k$, if agent $i$ wakes up, it performs the following actions. First, it divides its values $x_i, y_i$ into $d_i^+ + 1$ parts and broadcasts these to its out-neighbors; actually, what it broadcasts are the accumulated running sums $\phi_i^x$ and $\phi_i^y$. Following \cite{kempe2003gossip}, this is sometimes called the ``push step.'' 

Then, node $i$ moves on to process  the messages in its inbox in the following way. If agent $i$ has received a message from node $j$ that is newer than the last one it has received before, it will store that message in $\rho_{ij}^*$ and discard the older messages. Next, $i$ updates its $x$ and $y$ variables by adding the difference of $\rho_{ij}^*$ with the older value $\rho_{ij}$, for all in-neighbors $j$. As mentioned above, this difference is equal to the new mass received. Next, $\rho_{ij}^*$ overwrites $\rho_{ij}$ in the penultimate step. The last step of the algorithm sets $z_i$ to be the rescaled version of $x_i$: $z_i = x_i/y_i$. 

In the remainder of this section, we provide an analysis of the RAPS algorithm, ultimately showing that it converges geometrically to the average in the presence of message losses, asynchronous updates, delays, and directed communication. Our first step is to formulate the RAPS algorithm in terms of a linear update (i.e., a matrix multiplication), which we do in the next subsection.

\subsection{Linear formulation}
Next we show that, after introducing some new auxiliary variables,  Algorithm \ref{alg: raps} can be written in terms of a classical push-sum algorithm (\cite{kempe2003gossip}) on an augmented graph. Since the $y$-variables have the same evolution as the $x$-variables, here we only analyze the $x$-variables.

In our analysis, we will associate with each message an {\em effective delay}. 
If a message is sent at time $k_1$ and is ready to be processed at time $k_2$, then $k_2-k_1 \geq 1$ is the effective delay experienced by that message. Those messages that are discarded  will not have an effective delay associated with them and are considered as lost.

Next, we will state our assumptions on connectivity, asynchronicity, and message loss. 

\begin{assumption}\label{asm: connvectivity}
	Suppose:
	\begin{enumerate}
		\item[(a)]
		Graph $\G$ is strongly connected and does not have self-loops.
		\item[(b)]
		The  delays on each link are bounded above by some $L_{\rm del} \geq 1$.
		\item[(c)]
		Every agent wakes up and performs updates at least once every $L_u\geq 1$ iterations.
		\item[(d)]
		Each link fails at most $L_f\geq 0$ consecutive times.
		\item[(e)] \label{ass:e}
		Messages arrive in the order of time they were sent. In other words, if messages are sent from node $i$ to $j$ at times $k_1$ and $k_2$ with (effective) delays $d_1$ and $d_2$, respectively, and $k_1<k_2$, then we have $k_1+d_1<k_2+d_2.$
	\end{enumerate}
\end{assumption}

One consequence of Assumption \ref{asm: connvectivity} is that the effective delays associated with each message that gets through are bounded above by $L_d := L_{\rm del} + L_u - 1$. Another consequence is that for each $(i,j)\in \Ed$, $j$ receives a message from $i$ successfully, at least once every $L_s$ iterations where \begin{equation} \label{eq:Ls} L_s:=L_u(L_f+1)+L_d\geq 2. \end{equation} 

Part (e) of Assumption \ref{ass:e} can be assumed without loss of generality. Indeed, observe that outdated messages automatically get discarded in Line \ref{discard} of our algorithm. For simplicity, it is convenient to think of those messages as lost. Thus, if this assumption fails in practice, the algorithm will perform exactly as if it had actually held in practice due to Line \ref{discard}. Making this an assumption, rather than a proposition, lets us slightly simplify some of the arguments and avoid some redundancy throughout this paper.

Let us introduce the following indicator variables:
$\tau_i(k)$ for $i\in\{1,\ldots,n\}$ which equals to $1$ if node $i$ wakes up at time $k$, and equals $0$ otherwise.
Similarly, $\tau_{ij}^l(k)$ for $(i,j)\in \Ed$, $1 \leq l\leq L_d$, which is $1$ if $\tau_i(k)=1$ {\bf and} the message sent from node $i$ to $j$ at time $k$ will arrive after experiencing an effective delay of $l$.~\footnote{Note the difference between indexing in $\tau_{ij}^l$ and $\rho_{ji}^x$, which are both defined for link $(i,j)\in \Ed$.} Note that if node $i$ wakes up at time $k$ but the \as{message it sends to $j$ is} lost, then $\tau_{ij}^l(k)$ will be zero for all $l$.

We can rewrite the RAPS algorithm  with the help of these indicator variables. Let us adopt the notation that $x_i(k)$ refers to $x_i$ at the {\bf beginning} of round $k$ of the algorithm (i.e., before node $i$ has a chance to go through the list of steps outlined in the algorithm box). We will use the same convention with all of the other variables, e.g., $y_i(k), z_i(k)$, etc. If node $i$ does not wake up at round $k$, then of course $x_i(k+1)=x_i(k)$.

Now observe that we can write \begin{equation} \phi_i^x(k+1) -\phi_i^x(k)=\tau_i(k)\frac{x_i(k)}{d_i^{+}+1}\label{eq: sigma alg}. \end{equation}
Likewise, we have
	\begin{align}
	x_i(k+1)&=x_i(k) \left( 1-\tau_i(k)+\frac{\tau_i(k)}{d_i^{+}+1} \right) + \sum_{j\in N_i^-}\left(\rho_{ij}^x(k+1)-\rho_{ij}^x(k)\right) \label{eq: x alg}, 
	\end{align} 
	which can be shown by considering each case ($\tau_i(k)=1$ or $0$); note that we have used the fact that, in the event that node $i$ wakes up at time $k$, the variable $\rho_{ij}^x(k+1)$ equals the variable $\rho_{ij}^{*x}$ during execution of Line \ref{rhoup} of the algorithm at time $k$.

Finally, we have that $\forall (i,j) \in \Ed$, the flows $\rho_{ji}^x$ are updated as follows:
	\begin{align}\label{eq: rho alg}
	\rho_{ji}^x(k+1) = \rho_{ji}^x(k) + \sum_{l=1}^{L_d} \tau_{ij}^l(k-l) \left( \phi_i^x(k+1-l) - \rho_{ji}^x(k) \right),
	\end{align}
	where we make use of the fact that the sum contains only a single nonzero term, since the messages arrive monotonically. To parse the indices in this equation, note  that  node $i$ actually broadcasts $\phi_i^x(k+1-l)$ in our notation at iteration $k-l$; by our definitions, $\phi_i^x(k-l)$ is the value of $\phi_i^x$ at the {\bf beginning} of that iteration. 
	To simplify these relations, we introduce the auxiliary variables $u_{ij}^x$ for all $(i,j)\in \Ed$, defined through the following recurrence relation: 
	\begin{align}\label{eq: U def}
	u_{ij}^x(k+1) := \left(1-\sum_{l=1}^{L_d} \tau_{ij}^l(k)\right)\Big(u_{ij}^x(k) + \phi_i^x(k+1)-\phi_i^x(k)\Big),
	\end{align}
	and initialized as $u_{ij}^x(0):=0$. Intuitively, the variables $u_{ij}^x$ represent the ``excess mass'' of $x_i$ that is yet to reach node $j$. Indeed, this quantity resets to zero whenever a message is sent that arrives at some point in the future, and otherwise is incremented by adding the broadcasted mass that is lost. Note that node $i$ never knows $u_{ij}^x(k)$, since it has no idea which messages are lost, and which are not; nevertheless, for purposes of analysis, nothing prevents us from considering these variables.

Let us also define the related quantity,
\begin{align*}
\mu_{ij}^x(k):=u_{ij}^x(k)+\phi_i^x (k+1)-\phi_i^x(k), \qquad \text{for } k\geq 0,
\end{align*}
and $\mu_{ij}^x(k):=0$ for $k < 0$. Intuitively, this quantity may be thought of as a forward-looking estimate of the mass that {\em will arrive} at node $j$,
if the message sent from node $i$ at time $k$ gets through;
correspondingly, it includes not only the previous unsent mass, but the extra mass that will be added at the current iteration.

The key variables for the analysis of our method are the variables we will denote by $x_{ij}^l(k)$. Intuitively, every time a message is sent, but gets lost, we imagine that it has instead arrived into a ``virtual node'' which holds that mass; once the next message gets through, we imagine that the virtual node has forwarded that mass to its intended destination. This idea originates from \cite{hadjicostis2016robust}.  Because of the delays, however, we need to introduce $L_d$ virtual nodes for each such event. If a message is sent from $i$ and arrives at $j$ with
effective delay $l$, we will instead imagine it is received by the virtual node $b_{ij}^l$, then sent to $b_{ij}^{l-1}$ at the next time step, and so forth until it reaches $b_{ij}^1$, and is then forwarded to its destination. These virtual nodes are defined formally later.

Putting that intuition aside, we formally define the variables  $x_{ij}^l(k)$ via the following set of recurrence relations:
\begin{align}
x_{ij}^{l}(k+1) &:= \tau_{ij}^{l}(k)\mu_{ij}^x(k), &l={L_d}, \label{eq:bd} \\
x_{ij}^l(k+1) &:=  \tau_{ij}^l(k)\mu_{ij}^x(k) + x_{ij}^{l+1}(k),  &1\leq l <{L_d}, \label{eq:prop}
\end{align}
and $x_{ij}^l(k):=0$ when both $k \leq 0$ and $l=1,\ldots,L_d$. To parse these equations, imagine what happens when a message is sent from $i$ to $j$ with effective delay of $L_d$ at time $k$. The content of this message becomes the value of $x_{ij}^{L_d}$ according to \eqref{eq:bd}; and, in each subsequent step, influences $x_{ij}^{L_d-1}, x_{ij}^{L_d-2}$, and so forth according to \eqref{eq:prop}.
Putting \eqref{eq:bd} and \eqref{eq:prop} together,  we obtain 
\begin{align}\label{eq:xl}
x_{ij}^l(k)=\sum_{t=1}^{L_d-l+1}\tau_{ij}^{t+l-1}(k-t)\mu_{ij}^x(k-t),
\end{align}
and particularly,
\begin{align}\label{eq:x1}
x_{ij}^1(k) = \sum_{t=1}^{L_d} \tau_{ij}^t(k-t)\mu_{ij}^x(k-t).
\end{align}
Note that, as is common in many of the equations we will write, only a single term in the sums can be nonzero (this is not obvious at this point and is a result of Lemma \ref{lem: delay}).

Before proceeding to the main result of this section, we state the following lemma, whose proof is immediate. 

\begin{lemma}\label{lem: delay}
	If $\tau_{ij}^l(k)=1$, the following statements are satisfied:
	\begin{enumerate}
		\item[(a)]
		$\tau_{ij}^{l'}(k)=0$ for $l'\neq l$.
		\item[(b)]
		If $l>0$, then $\tau_{ij}^{s}(k+t)=0$ for $t=1,\ldots,l$ and $s=0,\ldots,l-t$.
		\item[(c)]
		If $l<L_d$, then $\tau_{ij}^{s}(k-t)=0$ for $t=1,\ldots,L_d-l$ and $s=l+t,\ldots,L_d$.
	\end{enumerate}
\end{lemma}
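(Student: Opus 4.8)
The plan is to read all three parts straight off the definition of $\tau_{ij}^l$ together with the in-order arrival hypothesis, Assumption~\ref{asm: connvectivity}(e); nothing heavier is required, which is exactly why the statement is flagged as immediate. The only real content is translating the FIFO inequality into the stated index ranges.

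First I would settle part (a). Having $\tau_{ij}^l(k)=1$ forces $\tau_i(k)=1$, so at time $k$ node $i$ wakes up and sends exactly one message toward $j$ in its push step (Algorithm~\ref{alg: raps}). That single transmission experiences a single, well-defined effective delay, namely the $l$ in question. Since one message cannot simultaneously arrive with two distinct delays, $\tau_{ij}^{l'}(k)=0$ for every $l'\neq l$.

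For parts (b) and (c) I would invoke in-order arrival. Fix $k,l$ with $\tau_{ij}^l(k)=1$; the message sent at time $k$ becomes ready to be processed at time $k+l$. Now take any other candidate send-time $k'$. If $i$ is asleep at $k'$, or wakes but its message to $j$ is lost, then $\tau_{ij}^s(k')=0$ for all $s$ and there is nothing to prove; otherwise the message sent at $k'$ gets through with some effective delay $s$, and Assumption~\ref{asm: connvectivity}(e) constrains the two successful transmissions. For part (b) put $k'=k+t$ with $1\le t\le l$, so the fixed message is the earlier one; Assumption~\ref{asm: connvectivity}(e) yields $k+l<(k+t)+s$, i.e.\ $s>l-t$, whence $\tau_{ij}^s(k+t)=0$ for $s=0,\ldots,l-t$ (the value $s=0$ being a vacuous placeholder, since $\tau_{ij}^0$ is not defined). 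For part (c) put $k'=k-t$ with $1\le t\le L_d-l$, so now $k'$ is the earlier send-time; Assumption~\ref{asm: connvectivity}(e) gives $(k-t)+s<k+l$, i.e.\ $s<l+t$, whence $\tau_{ij}^s(k-t)=0$ for $s=l+t,\ldots,L_d$.

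There is no genuine obstacle here; the two things to watch are keeping the strict inequality of Assumption~\ref{asm: connvectivity}(e)—which is precisely what pins the endpoint of each index range—and remembering that the asleep/lost cases are covered trivially because every $\tau_{ij}^s(k')$ already vanishes there, so the FIFO argument only needs the case where both transmissions succeed.
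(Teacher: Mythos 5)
Your proof is correct and is exactly the intended argument: the paper states this lemma ``whose proof is immediate'' and omits any proof, and your write-up (part (a) from the fact that a single transmission has a single effective delay, parts (b) and (c) by translating the strict FIFO inequality of Assumption~\ref{asm: connvectivity}(e) into the stated index ranges, with the asleep/lost cases handled trivially) is precisely what the authors leave to the reader. Your care with the strict inequality pinning the endpoints, and the remark that $s=0$ is vacuous, are both sound.
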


\begin{lemma}\label{lem: xl>l=0}
	If $\tau_{ij}^l(k)=1$ then $x_{ij}^{l'}(k)=0$ for $l'>l$.
\end{lemma}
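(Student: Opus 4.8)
The plan is to work directly from the closed-form expression \eqref{eq:xl} for the auxiliary variables and reduce the claim to a statement purely about the indicator variables $\tau_{ij}^{\cdot}(\cdot)$, which is then settled by part (c) of Lemma \ref{lem: delay}. First I would dispose of the boundary case $l = L_d$: here there is no $l' > l$ in the admissible range $\{1,\dots,L_d\}$, so the assertion is vacuously true. Hence I may assume $l < L_d$ and fix an arbitrary $l'$ with $l < l' \le L_d$.

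Next I would expand $x_{ij}^{l'}(k)$ using \eqref{eq:xl}, obtaining a sum of terms of the form $\tau_{ij}^{t+l'-1}(k-t)\,\mu_{ij}^x(k-t)$ as $t$ ranges over $1,\dots,L_d-l'+1$. Since each product is killed as soon as its $\tau$-factor vanishes, it suffices to show $\tau_{ij}^{t+l'-1}(k-t)=0$ for every such $t$. To invoke Lemma \ref{lem: delay}(c) with the hypothesis $\tau_{ij}^l(k)=1$ (and $l<L_d$), I would read off $s := t+l'-1$ as the upper index and $t$ as the backward time-shift, and then verify the two range conditions required by (c). The bookkeeping is the heart of the argument: because $l'\ge l+1$, the upper limit of the sum satisfies $L_d-l'+1\le L_d-l$, so every $t$ in the sum lies in $\{1,\dots,L_d-l\}$, matching the allowed shifts in (c). For the upper index, $s=t+l'-1\ge t+l$ exactly because $l'\ge l+1$, while $s=t+l'-1\le L_d$ follows from $t\le L_d-l'+1$; thus $s\in\{l+t,\dots,L_d\}$, which is precisely the range for which (c) guarantees $\tau_{ij}^{s}(k-t)=0$. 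Every summand therefore vanishes and $x_{ij}^{l'}(k)=0$.

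The only real obstacle is this index matching --- making sure the pair $(s,t)$ produced by each term of \eqref{eq:xl} genuinely falls inside the window in Lemma \ref{lem: delay}(c), and noticing that the strict inequality $l'>l$ is exactly what supplies both the lower bound on $s$ and the shortened range of $t$. An alternative, more self-contained route would avoid \eqref{eq:xl} altogether and argue by backward induction on $l'$ directly from the defining recurrences \eqref{eq:bd}--\eqref{eq:prop}, using Lemma \ref{lem: delay}(b) to propagate the vanishing of successive $\tau$'s; but the closed-form computation above is shorter and keeps all the combinatorics in one place.
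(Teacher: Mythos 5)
Your proposal is correct and follows essentially the same route as the paper's proof: expand $x_{ij}^{l'}(k)$ via the closed form \eqref{eq:xl} and kill every summand with Lemma \ref{lem: delay}(c). The only difference is that you spell out the index bookkeeping (and the vacuous case $l=L_d$) that the paper leaves implicit, which is a fair bit more careful but not a different argument.
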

\begin{proof}
	By Lemma \ref{lem: delay}(c), $\tau_{ij}^{t+l'-1}(k-t)=0$ for $t\in \{1,\ldots,L_d-l'+1\}$. Hence, by \eqref{eq:xl} we have,
	\[ x_{ij}^{l'}(k)=\sum_{t=1}^{L_d-l'+1}\tau_{ij}^{t+l'-1}(k-t)\mu_{ij}^x(k-t)=0.
	\]
\end{proof}

The next lemma is essentially a restatement of the observation that the content of every $x_{ij}^{l'}$ eventually ``passes through'' $x_{ij}^1$.

\begin{lemma}\label{lem: x1=xl}
	If $\tau_{ij}^l(k-l)=1$, $l\geq 1$, we have,
	\[
	\sum_{l'=1}^{l}x_{ij}^{l'}(k-l)=\sum_{t=1}^{l}x_{ij}^1(k-t).
	\]
\end{lemma}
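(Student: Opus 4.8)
The plan is to reduce the identity to a \emph{pointwise} statement that tracks a single packet of mass as it descends through the virtual nodes. Writing $m := k-l$, the hypothesis becomes $\tau_{ij}^l(m)=1$, and since $k-t=m+(l-t)$ the claimed right-hand side is $\sum_{t=1}^{l}x_{ij}^1(k-t)=\sum_{s=0}^{l-1}x_{ij}^1(m+s)$. Hence it suffices to establish the termwise identity
\[
x_{ij}^{l'}(m)=x_{ij}^1\!\left(m+l'-1\right),\qquad l'=1,\ldots,l,
\]
and then sum over $l'$: the left-hand side becomes $\sum_{l'=1}^{l}x_{ij}^{l'}(m)$, while the right-hand side becomes $\sum_{s=0}^{l-1}x_{ij}^1(m+s)$, which is exactly the desired equality. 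Intuitively the identity says that whatever mass sits in the $l'$-th virtual node at time $m$ simply slides down one level per time step, reaching the first virtual node $l'-1$ steps later, \emph{without} any fresh mass being injected on top of it along the way.

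To prove the termwise identity I would telescope the recurrence \eqref{eq:prop}. Fix $l'$ (the case $l'=1$ is the trivial identity $x_{ij}^1(m)=x_{ij}^1(m)$). Applying \eqref{eq:prop} at level $l'-j-1$ and time $m+j$ gives
\[
x_{ij}^{\,l'-j-1}(m+j+1)=\tau_{ij}^{\,l'-j-1}(m+j)\,\mu_{ij}^x(m+j)+x_{ij}^{\,l'-j}(m+j),
\]
valid for $j=0,1,\ldots,l'-2$ (all appearing levels $l'-j-1\in\{1,\ldots,l-1\}$ satisfy $1\le l'-j-1<L_d$ because $l'\le l\le L_d$, so \eqref{eq:prop} rather than the boundary relation \eqref{eq:bd} governs every step). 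If each source term $\tau_{ij}^{\,l'-j-1}(m+j)\,\mu_{ij}^x(m+j)$ vanishes, these equalities chain into $x_{ij}^{l'}(m)=x_{ij}^{l'-1}(m+1)=\cdots=x_{ij}^1(m+l'-1)$, which is the identity.

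The crux is therefore to verify that $\tau_{ij}^{\,l'-j-1}(m+j)=0$ for every $j=0,\ldots,l'-2$, and this is precisely what Lemma \ref{lem: delay} extracts from the single hypothesis $\tau_{ij}^l(m)=1$. For $j=0$ the index is $\tau_{ij}^{\,l'-1}(m)$ with $1\le l'-1\le l-1<l$, so it vanishes by Lemma \ref{lem: delay}(a). For $j\ge 1$ I would invoke Lemma \ref{lem: delay}(b) with its bound variables specialized to $t\mapsto j$ and $s\mapsto l'-j-1$; one checks $1\le j\le l$ and $0\le l'-j-1\le l-j$ (both inequalities using $l'\le l$), which places the pair inside the range where part (b) forces the indicator to zero. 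The main obstacle is purely this index bookkeeping: one must confirm, step by step, that $(j,\,l'-j-1)$ lands in the admissible window of Lemma \ref{lem: delay}(b) and remember to peel off the $j=0$ step, which is covered by part (a) rather than part (b). Once these range checks are in place the chain closes, and summing the termwise identity over $l'=1,\ldots,l$ yields the stated equality.
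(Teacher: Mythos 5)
Your proof is correct and takes essentially the same route as the paper's: under the change of variable $l'=l-t+1$, your termwise identity $x_{ij}^{l'}(k-l)=x_{ij}^1\left(k-l+l'-1\right)$ is exactly the identity $x_{ij}^1(k-t)=x_{ij}^{l-t+1}(k-l)$ established there, and both arguments chain the recurrence \eqref{eq:prop}, killing the injection terms via Lemma \ref{lem: delay}(a) for the first step and Lemma \ref{lem: delay}(b) for the remaining steps, then sum over the terms. Your index bookkeeping checks out, so there is no gap.
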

\begin{proof}
	We will show $x_{ij}^1(k-t)=x_{ij}^{l-t+1}(k-l)$ for $t=1,\ldots,l$. For $t=l$ the equality is trivial. Now suppose $t<l$.
	By Lemma \ref{lem: delay}(a) we have $\tau_{ij}^{l-t}(k-l)=0$. Moreover, by part (b) of the same lemma we have, $\tau_{ij}^{s'}(k-l+t')=0$ for $t'=1,\ldots,l-t-1$ and $s'=l-t-t'$. Hence, $x_{ij}^{l-t-t'+1}(k-l+t')=x_{ij}^{l-t-t'}(k-l+t'+1)$. Combining these equations for $t'=0,\ldots,l-t-1$, we get $x_{ij}^1(k-t)=x_{ij}^{l-t+1}(k-l)$.
\end{proof}

The following lemma is the key step of a linear formulation of RAPS.

\begin{lemma}\label{lem: rho u}
	For $k=0,1,\ldots$ and $(i,j)\in \Ed$ we have:
	\begin{gather}
	\rho_{ji}^x(k+1)-\rho_{ji}^x(k)=x_{ij}^1(k) \label{eq: rho x1},\\
	u_{ij}^x(k+1)+\rho_{ji}^x(k+1)+\sum_{l=1}^{L_d} x_{ij}^l(k+1)=\phi_i^x(k+1).\label{eq: u rho x}
	\end{gather} 
\end{lemma}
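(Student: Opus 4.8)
The plan is to treat the two identities together by strong induction on $k$, since neither can be established in isolation. I first rewrite the target \eqref{eq: u rho x} as the conservation law $S(k):=u_{ij}^x(k)+\rho_{ji}^x(k)+\sum_{l=1}^{L_d}x_{ij}^l(k)=\phi_i^x(k)$, now valid for all $k\geq 0$; the initializations give $S(0)=0=\phi_i^x(0)$. Writing $P(k)$ for \eqref{eq: rho x1} and $Q(k)$ for the statement $S(k+1)=\phi_i^x(k+1)$, I would prove $R(k):=P(k)\wedge Q(k)$ assuming $R(k')$ for all $k'<k$. Throughout I use the fact -- which follows from Lemma \ref{lem: delay}(b) -- that for fixed $(i,j)$ at most one term of $\sum_{l=1}^{L_d}\tau_{ij}^l(k-l)$ is nonzero (at most one message can arrive at time $k$), and that by Lemma \ref{lem: delay}(a) the quantity $T(k):=\sum_{l=1}^{L_d}\tau_{ij}^l(k)\in\{0,1\}$.

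The first, purely algebraic, step computes the increment $S(k+1)-S(k)$ from the recursions. Using $\mu_{ij}^x(k)=u_{ij}^x(k)+\phi_i^x(k+1)-\phi_i^x(k)$, the $u$-recursion \eqref{eq: U def} gives $u_{ij}^x(k+1)-u_{ij}^x(k)=\phi_i^x(k+1)-\phi_i^x(k)-T(k)\mu_{ij}^x(k)$, while summing the recursions \eqref{eq:bd}--\eqref{eq:prop} telescopes to $\sum_{l=1}^{L_d}x_{ij}^l(k+1)-\sum_{l=1}^{L_d}x_{ij}^l(k)=T(k)\mu_{ij}^x(k)-x_{ij}^1(k)$. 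The two $T(k)\mu_{ij}^x(k)$ contributions cancel, leaving
\[
S(k+1)-S(k)=\phi_i^x(k+1)-\phi_i^x(k)-x_{ij}^1(k)+\bigl(\rho_{ji}^x(k+1)-\rho_{ji}^x(k)\bigr).
\]
Hence, once $P(k)$ is known, the last two terms cancel and $Q(k-1)$ (i.e.\ $S(k)=\phi_i^x(k)$) immediately upgrades to $Q(k)$. So the crux is to prove $P(k)$.

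For $P(k)$ I split on whether a message arrives at time $k$. If $\sum_l\tau_{ij}^l(k-l)=0$, both sides of \eqref{eq: rho x1} vanish by \eqref{eq: rho alg} and \eqref{eq:x1}. Otherwise there is a unique $t_0$ with $\tau_{ij}^{t_0}(k-t_0)=1$, and \eqref{eq: rho alg}, \eqref{eq:x1} reduce $P(k)$ to the claim $\rho_{ji}^x(k)=\phi_i^x(k-t_0)-u_{ij}^x(k-t_0)$ (after cancelling $\phi_i^x(k+1-t_0)$). Here I use the induction: $Q(k-t_0-1)$ rewrites the right-hand side as $\rho_{ji}^x(k-t_0)+\sum_{l=1}^{L_d}x_{ij}^l(k-t_0)$, while summing $P(k')$ over $k-t_0\leq k'<k$ gives $\rho_{ji}^x(k)=\rho_{ji}^x(k-t_0)+\sum_{t=1}^{t_0}x_{ij}^1(k-t)$. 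Thus $P(k)$ reduces to $\sum_{t=1}^{t_0}x_{ij}^1(k-t)=\sum_{l=1}^{L_d}x_{ij}^l(k-t_0)$, which is exactly where Lemmas \ref{lem: xl>l=0} and \ref{lem: x1=xl} enter: the former kills the terms $x_{ij}^{l}(k-t_0)$ with $l>t_0$, and the latter identifies $\sum_{l=1}^{t_0}x_{ij}^l(k-t_0)$ with $\sum_{t=1}^{t_0}x_{ij}^1(k-t)$.

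I expect the main obstacle to be precisely this entanglement: $P(k)$ needs the conservation law $Q$ at the earlier arrival time $k-t_0$, and $Q(k)$ needs $P(k)$, so the bookkeeping of which earlier instances of $P$ and $Q$ are invoked must be arranged so the strong induction stays well-founded (all invoked indices are $<k$, with the degenerate case $k-t_0=0$ handled by the base $S(0)=\phi_i^x(0)$). Once the reduction to $\sum_{t=1}^{t_0}x_{ij}^1(k-t)=\sum_{l=1}^{L_d}x_{ij}^l(k-t_0)$ is in hand, the two preparatory lemmas close it immediately, which is a good sign that the virtual-node picture (mass queued in $x_{ij}^{L_d},\ldots,x_{ij}^2$ eventually passing through $x_{ij}^1$) has been correctly encoded.
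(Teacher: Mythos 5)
Your proposal is correct and follows essentially the same route as the paper's proof: a joint strong induction in which \eqref{eq: rho x1} at time $k$ is reduced (via the definition of $\mu_{ij}^x$ and the unique arrival index $t_0$) to the identity $\rho_{ji}^x(k)=\phi_i^x(k-t_0)-u_{ij}^x(k-t_0)$, which is then closed by telescoping earlier instances of \eqref{eq: rho x1} together with Lemmas \ref{lem: xl>l=0} and \ref{lem: x1=xl} and the earlier instance of \eqref{eq: u rho x}. Your increment computation $S(k+1)-S(k)$ is just a repackaging of the paper's step of summing the three update recursions to propagate \eqref{eq: u rho x}, so the two arguments coincide in substance.
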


Parsing these equations, \eqref{eq: rho x1} simply states that the value of $x_{ij}^1(k)$ can be thought of as impacting $\rho_{ji}^x$ at time $k$; recall that the content of $x_{ij}^1(k)$ is a message that was sent from node $i$ to $j$ at time $k-l$ with an effective delay of $l$, for some $1\leq l \leq L_d$ (cf.\ \eqref{eq:x1}).
On the other hand, \eqref{eq: u rho x} may be thought of a ``conservation of mass'' equation. All the mass that has been sent out by node $i$ has either: (i) been lost (in which case it is in $u_{ij}^x$), (ii) affected node $j$ (in which case it is in $\rho_{ji}^x$), or (iii) is in the process of reaching node $j$ but delayed (in which case it is in some $x_{ij}^l$).

Although this lemma is arguably obvious, a formal proof is surprisingly lengthy. For this reason, we relegate it to the Appendix. 

We next write down a matrix form of our updates. As a first step, define the $(n+m')\times1$ column vector $\bchi(k):=[\bx(k)^{\top},\bx^1(k)^{\top},\ldots,\bx^{L_d}(k)^{\top},\mathbf{u}^x(k)^{\top}]^{\top}$, where $m':=(L_d+1)m$, $m:=|\Ed|$, $\bx(k)$ collects all $x_i(k)$, $\bx^l(k)$ collects all $x_{ij}^l(k)$ and, $\mathbf{u}^x(k)$ collects all $u_{ij}^x(k)$. 
Define $\bpsi(k)$ by collecting $y$-values similarly.

Now, we have all the tools to show the linear evolution of $\bchi(k)$. By Equations \eqref{eq: sigma alg}, \eqref{eq: x alg} and \eqref{eq: rho x1} we have, \begin{small}
	\begin{align}\label{eq: x-linear}
	x_j(k+1) &=x_j(k)\left(1-\tau_j(k)+\frac{\tau_j(k)}{d_j^{+}+1}\right) 
	+\sum_{i\in N_j^-}x_{ij}^1(k).
	\end{align}  \end{small}
Moreover, by the definitions of $x_{ij}$, $\mu_{ij}$ and \eqref{eq: sigma alg} it follows,
\begin{align}\label{eq: xl-linear}
\begin{split}
x_{ij}^{L_d}(k+1) &= \tau_{ij}^{L_d}(k) \left[u_{ij}^x(k)+\frac{x_i(k)}{d_i^{+}+1} \right],\\
x_{ij}^l(k+1) &=  \tau_{ij}^l(k) \left[u_{ij}^x(k)+\frac{x_i(k)}{d_i^{+}+1}\right] + x_{ij}^{l+1}(k).
\end{split}
\end{align}
Finally, by \eqref{eq: sigma alg} and \eqref{eq: U def} we obtain, \begin{small}
	\begin{align}\label{eq: u-linear}
	u_{ij}^x(k+1) = \big(1-\sum_{l=1}^{L_d} \tau_{ij}^l(k)\big)\Big(u_{ij}^x(k) +\tau_i(k)\frac{x_i(k)}{d_i^{+}+1}\Big).
	\end{align} \end{small}
Using \eqref{eq: x-linear} to \eqref{eq: u-linear} we can write the evolution of $\bchi(k)$ and $\bpsi(k)$ in the following linear form:
\begin{align}
\label{eq: XMX}
\begin{split}
\bchi(k+1)=\bM(k)\bchi(k),\\
\bpsi(k+1)=\bM(k)\bpsi(k),
\end{split}
\end{align}
where $\bM(k)\in \mathbb{R}^{(n+m')\times(n+m')}$ is an appropriately defined matrix.

We have thus completed half of our goal: we have shown how to write RAPS as a linear update. Next, we show that the corresponding matrices are column-stochastic.

\begin{lemma}\label{lem: M stochastic}
	$\bM(k)$ is column stochastic and its positive elements are at least $1/(\max_i\{d_i^+\}+1)$. Moreover, for $i=1,\ldots,n$, $M_{ii}(k)$ are positive.
\end{lemma}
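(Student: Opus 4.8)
The plan is to read off the entries of $\bM(k)$ directly from the scalar update equations \eqref{eq: x-linear}, \eqref{eq: xl-linear} and \eqref{eq: u-linear}, and to verify the three claims by inspection of the coefficients. Non-negativity is immediate: every coefficient is a product of indicator variables $\tau_i(k),\tau_{ij}^l(k)$, a factor $1/(d_i^{+}+1)\in(0,1]$, and a factor $1-\sum_{l=1}^{L_d}\tau_{ij}^l(k)$, which by Lemma \ref{lem: delay}(a) lies in $\{0,1\}$; all of these are non-negative.

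For column stochasticity I would sum, for each source coordinate, the coefficients with which it enters every target coordinate, grouping the sources into four types. The variables $x_{ij}^1(k)$ and $x_{ij}^l(k)$ with $l\geq 2$ are easy: by \eqref{eq: x-linear} the former enters only $x_j(k+1)$ with coefficient $1$, and by \eqref{eq: xl-linear} the latter enters only $x_{ij}^{l-1}(k+1)$ with coefficient $1$, so each such column sums to $1$. For $u_{ij}^x(k)$, equations \eqref{eq: xl-linear} and \eqref{eq: u-linear} show it enters $x_{ij}^l(k+1)$ with coefficient $\tau_{ij}^l(k)$ and enters $u_{ij}^x(k+1)$ with coefficient $1-\sum_l\tau_{ij}^l(k)$, whose total is $\sum_{l=1}^{L_d}\tau_{ij}^l(k)+\bigl(1-\sum_{l=1}^{L_d}\tau_{ij}^l(k)\bigr)=1$.

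The only delicate column is that of $x_i(k)$, since this variable fans out to its own coordinate, to the virtual coordinates $x_{ij}^l$, and to the excess-mass coordinates $u_{ij}^x$ of every out-edge $(i,j)$. Its total column weight is
\[
\Bigl(1-\tau_i(k)+\tfrac{\tau_i(k)}{d_i^{+}+1}\Bigr)+\sum_{j\in N_i^{+}}\frac{1}{d_i^{+}+1}\Bigl[\sum_{l=1}^{L_d}\tau_{ij}^l(k)+\bigl(1-\sum_{l=1}^{L_d}\tau_{ij}^l(k)\bigr)\tau_i(k)\Bigr].
\]
Here I would use the defining property that $\tau_{ij}^l(k)=1$ forces $\tau_i(k)=1$. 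When $\tau_i(k)=0$, all $\tau_{ij}^l(k)=0$, the bracket vanishes, and the weight is $1$. When $\tau_i(k)=1$, the bracketed quantity equals $\sum_l\tau_{ij}^l(k)+\bigl(1-\sum_l\tau_{ij}^l(k)\bigr)=1$ for each of the $d_i^{+}$ out-edges, so the weight is $\tfrac{1}{d_i^{+}+1}+d_i^{+}\cdot\tfrac{1}{d_i^{+}+1}=1$. Either way the column sums to $1$, completing column stochasticity.

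For the remaining two claims I would enumerate the possible positive values of the entries. Every positive coefficient equals $1$, or $\tfrac{1}{d_i^{+}+1}$ for some $i$; since $\tfrac{1}{d_i^{+}+1}\geq\tfrac{1}{\max_i d_i^{+}+1}$ and $1\geq\tfrac{1}{\max_i d_i^{+}+1}$, the lower bound follows. Finally, the diagonal entry $M_{ii}(k)$ for $i\leq n$ is exactly the self-coefficient $1-\tau_i(k)+\tfrac{\tau_i(k)}{d_i^{+}+1}$ of $x_i$, which equals $1$ when $\tau_i(k)=0$ and $\tfrac{1}{d_i^{+}+1}$ when $\tau_i(k)=1$; in both cases it is positive. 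The one step requiring care is the $x_i$ column, where the loss/delay bookkeeping must be combined with the implication $\tau_{ij}^l(k)=1\Rightarrow\tau_i(k)=1$; everything else is direct inspection.
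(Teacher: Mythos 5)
Your proof is correct and follows essentially the same route as the paper's: a column-by-column inspection of $\bM(k)$ grouped into the same four source types ($x_i$, $x_{ij}^1$, $x_{ij}^l$ for $l\geq 2$, and $u_{ij}^x$), with the identical coefficients read off from \eqref{eq: x-linear}--\eqref{eq: u-linear} and the same observation that each positive entry is either $1$ or $1/(d_i^{+}+1)$. Your explicit case split on $\tau_i(k)\in\{0,1\}$ together with the implication $\tau_{ij}^l(k)=1\Rightarrow\tau_i(k)=1$ is just a slightly more detailed rendering of the paper's summation argument for the $x_i$ column.
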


This lemma can  be proved ``by inspection.'' Indeed, $\bM(k)$ is column stochastic if and only if, for every $\bchi(k)$, we have ${\bf 1}^T \bchi(k+1) = \1^T \bchi(k)$. Thus one just needs to demonstrate that no mass is ever ``lost,'' i.e., that a decrease/increase in the value of one node is always accompanied by an increase/decrease of the value of another node,  which can be done just by inspecting the equations. A formal proof is nonetheless given next.

\begin{proof}
	To show that $\bM(k)$ is column stochastic, we study how each element of $\bchi(k)$ influences $\bchi(k+1)$.
	
	For $i=1,\ldots,n$, the $i$th column of $\bM(k)$ represents how $x_i(k)$ influences $\bchi(k+1)$.\\
	We will use \eqref{eq: x-linear} to \eqref{eq: u-linear} to find these coefficients.
	
	First, $x_i(k)$ influences $x_i(k+1)$ with the coefficient $1-\tau_i(k)+\tau_i(k)/(d_i^++1)>0$. For $j \in N_i^+$, $x_i(k)$ influences
	$x_{ij}^l(k+1)$ by $\tau_{ij}^l(k)/(d_i^++1)$ and $u_{ij}^x(k+1)$ with coefficient $(\tau_{i}(k)-\sum_{l=1}^{L_d} \tau_{i}(k)\tau_{ij}^l(k))/(d_i^++1)$. Summing these coefficients up results in $1$.
	
	For $l=2,\ldots,L_d$, $(i,j)\in \Ed$, $x_{ij}^{l}(k)$ influences $x_{ij}^{l-1}(k+1)$ with coefficient $1$ and $x_{ij}^{1}(k)$ influences $x_{j}(k+1)$ with coefficient $1$.
	
	Finally, $u_{ij}^x(k)$ influences $x_{ij}^l(k+1)$ with coefficient $\tau_{ij}^l(k)$ and $u_{ij}^x(k+1)$ with $(1-\sum_{l=1}^d \tau_{ij}^l(k))$, which sum up to $1$.
	
	Note that all the coefficients above are at least $1/(\max_i\{d_i^+\}+1)$.
\end{proof}

An important result of this lemma is the sum preservation property, i.e.,
\begin{align}\label{eq: sum preservation}
\begin{split}
\sum_{i=1}^{n+m'} \chi_i(k) &= \sum_{i=1}^n x_i(0),\\
\sum_{i=1}^{n+m'} \psi_i(k) &= n.
\end{split}
\end{align}

For further analysis, we augment the graph $\G$ to $\H(k):=\G_{\bM(k)}=(\V_A,\Ed_A(k))$ by adding the following virtual nodes: $b_{ij}^l$ for $l=1,\ldots,L_d$ and $(i,j)\in \Ed$, which hold the values $x_{ij}^l$ and $y_{ij}^l$; We also add the nodes $c_{ij}$ for $(i,j)\in \Ed$ which hold the values $u_{ij}^x$ and $u_{ij}^y$.

In $\H(k)$, there is a link from $b_{ij}^l$ to $b_{ij}^{l-1}$ for $1<l \leq d$ and from $b_{ij}^1$ to $j$ as they forward their values to the next node. Moreover, if $\tau_{ij}^l(k)=1$ for some $1\leq l \leq L_d$, then there is a link from both $c_{ij}$ and $i$ to $b_{ij}^l$.

If $\tau_{ij}^l(k)=0$ for $1\leq l \leq L_d$ then $c_{ij}$ has a self loop, and if also $\tau_i(k)=1$, there's a link from $i$ to $c_{ij}$. All non-virtual agents $i\in \V$, have self-loops all the time (see Fig. \ref{fig: augmented graph}).

\begin{figure}[ht]
	\centering
	\begin{subfigure}[t]{0.32\textwidth}
		\includegraphics[width=\textwidth]{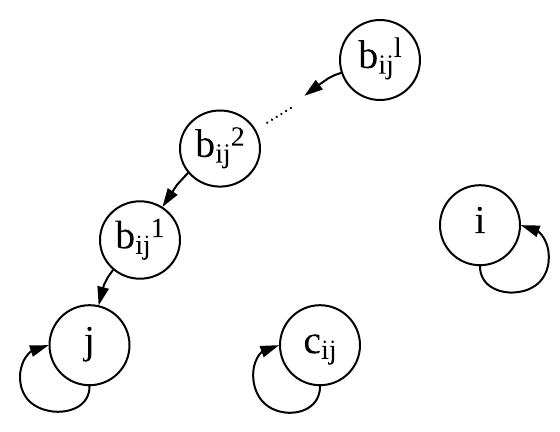}
		\caption{$\tau_i(k)=0$.}
	\end{subfigure}
	\begin{subfigure}[t]{0.32\textwidth}
		\includegraphics[width=\textwidth]{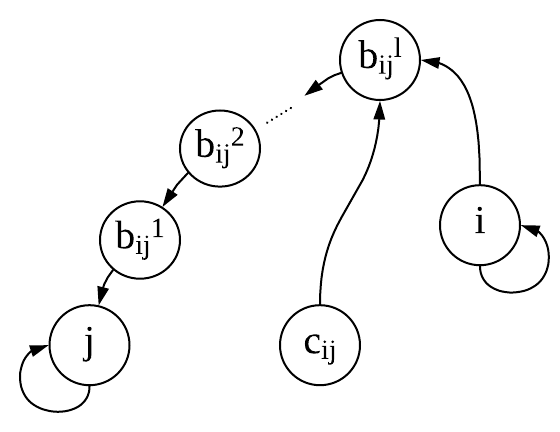}
		\caption{$\tau_{ij}^l(k)=1$.}
	\end{subfigure}
	\begin{subfigure}[t]{0.32\textwidth}
		\includegraphics[width=\textwidth]{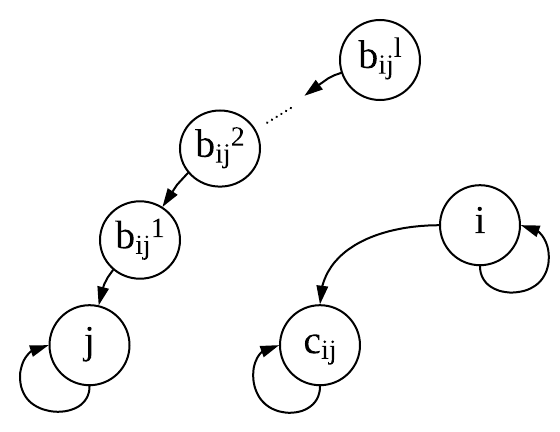}
		\caption{$\tau_i(k)=1$, $\tau_{ij}^l(k)=0, \forall l$.}
	\end{subfigure}
	\caption{Augmented graph $\H(k)$ for different scenarios.}
	\label{fig: augmented graph}
\end{figure}

Recursions \eqref{eq: XMX} and Lemma \ref{lem: M stochastic} may thus be interpreted as showing that the RAPS algorithm can be thought of as a push-sum algorithm over the augmented graph sequence $\{\H(k)\}$, where each agent (virtual and non-virtual) holds an $x$-value and a $y$-value which evolve similarly and in parallel.

\subsection{Exponential convergence}
The main result of this section is exponential convergence of RAPS to initial average, stated next.
\begin{theorem}\label{The: geometric push-sum}
	Suppose Assumption \ref{asm: connvectivity} holds. Then RAPS converges exponentially to the initial mean of agent values. i.e.,
	\begin{align*}
	\left| z_i(k)-\frac{1}{n}\sum_{i=1}^n x_i(0) \right| \leq \delta \lambda^k \Vert \bx(0) \Vert_1,
	\end{align*}
	where $\delta:=\frac{1}{1-n\alpha^6}$, $\lambda:=(1-n\alpha^6)^{1/(2nL_s)}$ and $\alpha:=(1/n)^{nL_s}$.
\end{theorem}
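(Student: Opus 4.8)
The plan is to reduce the claim to the classical convergence analysis of push-sum over a column-stochastic product of matrices, exploiting the linear formulation \eqref{eq: XMX} together with the column-stochasticity established in Lemma \ref{lem: M stochastic}. Writing $\Phi(k):=\bM^{k-1:0}$, we have $\bchi(k)=\Phi(k)\bchi(0)$ and $\bpsi(k)=\Phi(k)\bpsi(0)$. Since the virtual nodes are initialized at zero and $\psi_j(0)=1$ exactly for the $n$ real nodes, the estimate of node $i$ can be written as
\begin{equation*}
z_i(k)=\frac{\chi_i(k)}{\psi_i(k)}=\frac{\sum_{j=1}^n [\Phi(k)]_{ij}\,x_j(0)}{\sum_{j=1}^n [\Phi(k)]_{ij}},
\end{equation*}
so that, using $\sum_{j=1}^n (x_j(0)-\bar x)=0$ with $\bar x:=\frac1n\sum_j x_j(0)$,
\begin{equation*}
z_i(k)-\bar x=\frac{\sum_{j=1}^n [\Phi(k)]_{ij}\,(x_j(0)-\bar x)}{\psi_i(k)}.
\end{equation*}
The entire problem thus comes down to two quantitative facts about $\Phi(k)$: that its columns become nearly identical at a geometric rate, and that the denominator $\psi_i(k)$ stays bounded away from zero.

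First I would establish a uniform ergodicity bound for the augmented graph sequence $\{\H(k)\}$: over any window of $B:=nL_s$ consecutive steps, the restriction of the product of the corresponding $\bM(\cdot)$ to the persistent (real) nodes has all entries bounded below by $\alpha=(1/n)^{nL_s}$. This is where Assumption \ref{asm: connvectivity} enters. By \eqref{eq:Ls} each directed edge of the strongly connected graph $\G$ delivers a message successfully at least once every $L_s$ steps; since $\G$ has diameter at most $n-1$ and every real node carries a permanent self-loop (the positive diagonal of Lemma \ref{lem: M stochastic}), information propagates from any real node to any other within $B$ steps, the delayed mass being routed through the chain of virtual nodes $b_{ij}^{L_d}\to\cdots\to b_{ij}^1$ in at most $L_d<B$ steps. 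Each positive entry of a single $\bM(k)$ is at least $1/(\max_i d_i^+ +1)\ge 1/n$ by Lemma \ref{lem: M stochastic}, and multiplying these lower bounds along a connecting path of length at most $B$ yields the claimed $\alpha$.

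Next I would convert this into geometric convergence of the columns. Passing to the transposes $\bM(k)^{\top}$, which are row-stochastic, the columns of $\Phi(k)$ are the rows of a forward product of row-stochastic matrices; a standard coefficient-of-ergodicity (Birkhoff/Hajnal) contraction shows that once a window contains an all-positive block with entries $\ge\alpha$, the spread $\max_{j}[\Phi(k)]_{ij}-\min_j[\Phi(k)]_{ij}$ contracts by a fixed factor. Composing these contractions over windows of length $2nL_s$ produces a common limiting vector $\phi(k)$ with
\begin{equation*}
\bigl|[\Phi(k)]_{ij}-\phi_i(k)\bigr|\le \delta\,\lambda^{k},
\end{equation*}
the per-window contraction factor being $1-n\alpha^6$ (the power of $\alpha$ accounting for the several elementary transitions needed simultaneously to certify the contraction, and the window $2nL_s$ producing the exponent $1/(2nL_s)$ in $\lambda$), while $\delta=1/(1-n\alpha^6)$ is the sum of the resulting geometric series. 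In parallel, the same lower bound gives $\psi_i(k)=\sum_{j=1}^n[\Phi(k)]_{ij}\ge [\Phi(k)]_{ii}\ge\alpha$, keeping the denominator away from zero. Substituting into the displayed expression for $z_i(k)-\bar x$, replacing $[\Phi(k)]_{ij}$ by $\phi_i(k)$ at the cost of the geometric error (the $\phi_i(k)$ term vanishes since $\sum_j(x_j(0)-\bar x)=0$), and bounding $\sum_j|x_j(0)-\bar x|\le 2\Vert\bx(0)\Vert_1$, yields a bound of the stated form.

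The main obstacle is the ergodicity analysis on the time-varying augmented graph. Unlike a fixed strongly connected graph, the sequence $\{\H(k)\}$ has links that appear and disappear according to the indicators $\tau_i(\cdot)$ and $\tau_{ij}^l(\cdot)$, and a nontrivial fraction of the mass resides transiently in the virtual nodes $b_{ij}^l$ and $c_{ij}$. Verifying that every real node still influences every real node within a bounded window, with an explicitly computable lower bound, requires carefully tracing paths through the success/failure/delay pattern guaranteed by Assumption \ref{asm: connvectivity}; it is the bookkeeping of these paths—rather than the concluding push-sum estimate—that carries the exact powers of $\alpha$ and the window length appearing in $\delta$ and $\lambda$.
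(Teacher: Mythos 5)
Your plan is the classical push-sum analysis: represent $z_i(k)$ as a ratio of linear functionals of the backward product $\Phi(k)=\bM^{k-1:0}$, prove weak ergodicity of its columns by a scrambling (Dobrushin/Hajnal) argument, and bound the denominator $\psi_i(k)$ away from zero. The paper's proof is genuinely different: it never performs that final division. It forms the ratio process $\bz(k)=\bchi(k)\circ\bpsi^-(k)$ and shows (Lemma \ref{lem: uAu}, Corollary \ref{col: B stochastic}) that $\bz(k+1)=\bP(k)\bz(k)$, where every \emph{active} row of $\bP(k)$ (indices with $\psi_i>0$) is a convex combination of active entries of $\bz(k)$; it then contracts the spread of $\bz$ over the active sets $I^k$ (Lemma \ref{lem: s_t}) and identifies the limit as the initial average via mass conservation \eqref{eq: sum preservation}. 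Your ergodicity step is workable in outline, because Lemma \ref{lemma:positive n-rows} supplies exactly the window fact you need, but note two imprecisions: (i) what the Dobrushin step requires is positivity of the \emph{entire} first $n$ rows of the window product $\bM^{k+nL_s-1:k}$ --- every column, including those indexed by the virtual nodes $b_{ij}^l$ and $c_{ij}$, must place mass at least $\alpha$ on every real node --- not merely the real-to-real block your path argument emphasizes; the contraction mixes through all rows of $\Phi(k)$, so virtual columns must overlap as well (this is precisely what Lemma \ref{lemma:positive n-rows} proves); (ii) the bound $[\Phi(k)]_{ii}\geq\alpha$ does not come from multiplying diagonal entries (that would decay like $(1/n)^k$); it needs the same window fact combined with column-stochasticity of the remaining product, which is the content of Lemma \ref{lem: Y lower bound}.

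The genuine gap is quantitative, and it matters because the stated inequality includes explicit constants. Splitting numerator and denominator forces a terminal division by $\psi_i(k)\geq n\alpha$, so the best prefactor your framework can produce is of order $1/(n\alpha)=n^{nL_s-1}$ (times the geometric-series constant), whereas the theorem asserts the prefactor $\delta=1/(1-n\alpha^6)$, which is essentially $1$. A bound $C\mu^k$ with astronomically large $C$ does not imply $\delta\lambda^k$ for small and moderate $k$, even though your method actually yields a \emph{better} rate ($1-n\alpha$ per window of $nL_s$ steps, versus the paper's $1-n\alpha^6$ per $2nL_s$ steps). Relatedly, your assertion that the per-window contraction factor ``works out to'' $1-n\alpha^6$ over windows of length $2nL_s$ is reverse-engineered from the theorem statement: nothing in a Dobrushin argument on $\Phi(k)^{\top}$ produces the exponent $6$ or the doubled window. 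In the paper those numbers arise from the normalized matrices $\hat{\bP}(k)={\rm diag}(\bpsi^-(k+nL_s))\bM^{k+nL_s-1:k}{\rm diag}(\bpsi(k))$, whose positive entries are only guaranteed to be at least $\alpha^3$ (Corollary \ref{col: P}), and from composing two such window contractions in Lemma \ref{lem: s_t}, whence $\alpha^3\cdot\alpha^3=\alpha^6$. So, after repairing (i) and (ii), your argument proves exponential convergence to the initial average with some explicit constants, but it does not and cannot (within the numerator/denominator framework) prove Theorem \ref{The: geometric push-sum} with the stated $\delta$ and $\lambda$; the paper's ratio-process contraction is precisely what delivers the clean, $\psi$-independent prefactor.
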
 
\as{It is worth mentioning that even though $1/(1 - \lambda) = \O(n^{p(n)})$ where $p(n) = \O(n)$, this is a bound for a worst case scenario and on average, as it can be seen in numerical simulations, RAPS performs better. Moreover, when the graph $\G$ satisfies certain properties, such as regularity, and also there is no link delays and failures, we have $1/(1 - \lambda) = \O(n^3)$ (see Theorem 1 in \cite{nedic2016stochastic}). More broadly, that paper establishes that $1/(1-\lambda)$ will scale with the mixing rate of the underlying Markov process.}

Unfortunately, this theorem does not follow immediately from standard results on exponential convergence of push-sum. The reason is that the connectivity conditions assumed for such theorems are not satisfied here: there will not always be paths leading to virtual nodes from non-virtual nodes. Nevertheless, with some suitable modifications, the existence of paths from virtual nodes to other virtual nodes is sufficient, as we will show next.

Before proving the theorem, we need the following lemmas and definitions.
Given a sequence of graphs $\G^0,\G^1,\G^2,\ldots$, we will say node $b$ is reachable from node $a$ in time period $k_1$ to $k_2$ ($k_1<k_2$), if there exists a sequence of directed edges $e_{k_1},e_{k_1+1},\ldots,e_{k_2}$ such that $e_k$ is in $\G^k$, the destination of $e_k$ is the origin of $e_{k+1}$ for $k_1\leq k<k_2$, and the origin of $e_{k_1}$ is $a$ and the destination of $e^{k_2}$ is $b$.

Our first lemma provides a standard lower bound on the entries of the column-stochastic matrices from \eqref{eq: XMX}. 

\begin{lemma}\label{lemma:positive n-rows}
	$\bM^{k+nL_s-1:k}$ has positive first $n$ rows, for  any $k\geq 0$. The positive elements of this matrix are at least $$\alpha = (1/n)^{nL_s}.$$
\end{lemma}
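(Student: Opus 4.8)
The plan is to reduce the claim to a statement about reachability in the augmented graph sequence $\{\H(k)\}$ and then to exploit the ever-present self-loops together with the connectivity guarantees of Assumption \ref{asm: connvectivity}. The starting point is the graph interpretation of \eqref{eq: XMX}: the $(i,j)$ entry of $\bM^{k+nL_s-1:k}$ equals a sum, over all walks $j=j_0,j_1,\ldots,j_{nL_s}=i$ whose $t$-th edge is present in $\H(k+t)$, of the product $\prod_{t=0}^{nL_s-1}M_{j_{t+1}j_t}(k+t)$. By Lemma \ref{lem: M stochastic} every positive entry of each $\bM(k')$ is at least $1/(\max_i d_i^+ +1)\geq 1/n$, using that $\G$ has no self-loops so $d_i^+\leq n-1$. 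Since each of the $nL_s$ factors along a surviving walk is therefore $\geq 1/n$, a single such walk already contributes at least $(1/n)^{nL_s}=\alpha$. Hence it suffices to show that for every $j\in\{1,\ldots,n+m'\}$ and every non-virtual $i\in\{1,\ldots,n\}$, node $i$ is reachable from node $j$ over the window $[k,k+nL_s-1]$; the ``positive elements are at least $\alpha$'' part is then automatic, since any positive entry corresponds to at least one such length-$nL_s$ walk.

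I would build the required walk in two stages, using self-loops (non-virtual nodes always have them, and $c_{ij}$ has one exactly while no message gets through) to wait and to pad lengths. Stage one shows that every node reaches some non-virtual node within $L_s$ steps. A node $b_{ij}^l$ forwards along the chain $b_{ij}^l\to\cdots\to b_{ij}^1\to j$ in $l+1\leq L_d+1$ steps. For $c_{ij}$, I would wait on its self-loop until the first successful transmission on link $(i,j)$: by parts (c) and (d) of Assumption \ref{asm: connvectivity}, node $i$ wakes at least once every $L_u$ steps and the link fails at most $L_f$ consecutive times, so a success occurs at some $t^\ast\leq k+L_u(L_f+1)-1$, at which point $c_{ij}\to b_{ij}^{l'}$, and the chain then delivers the mass to $j$ by time $t^\ast+l'+1\leq k+L_s$. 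Thus from any virtual node the walk reaches a non-virtual node within $L_s$ edges, while from a non-virtual node this stage is empty.

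Stage two shows that every non-virtual node reaches every non-virtual node within $(n-1)L_s$ steps. Here I would use that $\G$ is strongly connected (Assumption \ref{asm: connvectivity}(a)), so there is a directed path of length $p\leq n-1$ between any two non-virtual nodes. The key sub-claim is that a single edge $(a,b)\in\Ed$ is realized in $\H$ within $L_s$ steps: starting at $a$, wait on its self-loop until a successful wake-up-and-send to $b$ at some $t^\ast\leq k+L_u(L_f+1)-1$, then ride $a\to b_{ab}^{l}\to\cdots\to b_{ab}^1\to b$, arriving at time $t^\ast+l+1\leq k+L_s$, i.e. within $L_s$ edges. Concatenating the $\leq n-1$ realized edges, and inserting self-loops at the current non-virtual node to align the successive $L_s$-windows, yields a walk of at most $(n-1)L_s$ edges from $a$ to the target.

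Finally I would splice the two stages: from any $j$, stage one reaches a non-virtual $w$ in $\leq L_s$ edges and stage two reaches the target $i$ from $w$ in $\leq (n-1)L_s$ edges, for a total of at most $nL_s$ productive edges; any remaining steps are filled with self-loops at non-virtual nodes, which always exist and carry weight $\geq 1/n$, producing a walk of exactly $nL_s$ edges from $j$ to $i$ over $[k,k+nL_s-1]$. Combined with the first paragraph this gives $(\bM^{k+nL_s-1:k})_{ij}\geq\alpha>0$ for all non-virtual $i$ and all $j$. I expect the main obstacle to be the bookkeeping in the time-window counting: verifying that a $\G$-edge is realized within exactly $L_s=L_u(L_f+1)+L_d$ steps by correctly combining the wake-up period $L_u$, the $L_f$ consecutive failures, and the delay bound $L_d$, and then checking that the stitched-together sub-walks plus self-loop padding fit within the single window of length $nL_s$ rather than overflowing it.
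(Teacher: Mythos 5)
Your proposal is correct and takes essentially the same route as the paper's proof: the paper argues that, thanks to the ever-present self-loops at non-virtual nodes and the strong connectivity of $\G$, the set of reachable non-virtual nodes from any node of the augmented graph strictly increases every $L_s$ iterations, and then multiplies the per-step lower bound $1/n$ on positive entries of $\bM(k)$ across $nL_s$ factors to obtain $\alpha = (1/n)^{nL_s}$. Your two-stage walk construction (reach a non-virtual node within $L_s$ steps, then realize each edge of a $\G$-path within $L_s$ steps, padding with self-loops) is a fully detailed justification of exactly that reachability claim, so the two arguments coincide in substance.
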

\begin{proof}
	By Lemma \ref{lem: M stochastic}, each node $j\in \V$ has self-loops at every iteration in the augmented graph $\H$. Since $\G$ is strongly connected, the set of reachable non-virtual nodes from any node $a_h \in \V_A$ strictly increases every $L_s$ iterations.
	Hence, $\bM^{k+nL_s-1:k}$ has positive first $n$ rows.
	Moreover, since all positive elements of $M$ are at least $1/n$, the positive elements of $\bM^{k+nL_s-1:k}$ are at least $(1/n)^{nL_s}$. 
\end{proof}
Next, we give a reformulation of the push-sum update that will be key to showing the exponential convergence of the algorithm. The proof is a minor variation of Lemma 4 in \cite{nedic2016stochastic}.
\begin{lemma}\label{lem: uAu}
	Consider the vectors $\bu(k)\in \R^d$, $\bv(k)\in \R^d_{+}$ and square matrix $\bA(k) \in \R^{d \times d}_{+}$, for $k\geq 0$ such that,
	\begin{align}\label{eq: uAu}
	\begin{split}
	\bu(k+1) = \bA(k)\bu(k),\\
	\bv(k+1) = \bA(k)\bv(k).
	\end{split}
	\end{align}
	Also suppose $u_i(k)=0$ if $v_i(k)=0$, $\forall k,i$.
	Define $\bu^-(k)\in \R^d$ as:
	\begin{align*}
	u_i^-(k):=\begin{cases}
	1/u_i(k), \qquad &\text{if } u_i(k)\neq 0,\\
	0,\qquad &\text{if } u_i(k)= 0.
	\end{cases}
	\end{align*}
	Define $\br(k):=\bu(k)\circ \bv^-(k)$, where $\circ$ denotes the element-wise product of two vectors. Then we have,
	\begin{align*}
	\br(k+1) =\bB(k)\br(k),
	\end{align*}
	where $\bB(k) \in \R^{d \times d}_{+}$ is defined as,
	\begin{align*}
	\bB(k):= {\rm diag}(\bv^-(k+1))\bA(k) {\rm diag}(\bv(k)).
	\end{align*}
\end{lemma}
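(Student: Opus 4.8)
The plan is to verify the identity $\br(k+1) = \bB(k)\br(k)$ by a direct coordinate computation, reducing everything to the two given recursions in \eqref{eq: uAu} and the zero-pattern hypothesis $u_i(k)=0$ whenever $v_i(k)=0$. First I would expand the left-hand side from the definition $\br(k) = \bu(k)\circ\bv^-(k)$ together with the recursion for $\bu$: since $r_i(k+1) = u_i(k+1)\,v_i^-(k+1)$ and $u_i(k+1) = \sum_j A_{ij}(k)\,u_j(k)$, this gives
$$r_i(k+1) = v_i^-(k+1)\sum_j A_{ij}(k)\,u_j(k).$$
Then I would read off the entries of $\bB(k) = {\rm diag}(\bv^-(k+1))\,\bA(k)\,{\rm diag}(\bv(k))$, namely $B_{ij}(k) = v_i^-(k+1)\,A_{ij}(k)\,v_j(k)$, so that
$$[\bB(k)\br(k)]_i = v_i^-(k+1)\sum_j A_{ij}(k)\,v_j(k)\,v_j^-(k)\,u_j(k).$$

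The crux is to show these two sums agree term by term, i.e.\ that $v_j(k)\,v_j^-(k)\,u_j(k) = u_j(k)$ for every $j$. When $v_j(k)\neq 0$ this is immediate since $v_j(k)\,v_j^-(k)=1$. The only delicate case is $v_j(k)=0$: then $v_j^-(k)=0$ by the definition of $\bv^-$, so the left-hand side vanishes, and the standing hypothesis $u_j(k)=0$ forces the right-hand side to vanish as well, so the identity still holds. This is the sole place the zero-pattern assumption is used, and it is the only point requiring any care. Substituting this identity back yields $[\bB(k)\br(k)]_i = v_i^-(k+1)\sum_j A_{ij}(k)\,u_j(k) = r_i(k+1)$, which is precisely the claim.

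Finally I would record the nonnegativity $\bB(k)\in\R^{d\times d}_+$: since $\bv^-(k+1)$ and $\bv(k)$ have nonnegative entries (reciprocals of nonnegatives, with $0$ in the zero positions) and $\bA(k)$ is entrywise nonnegative, $\bB(k)$ is a product of entrywise-nonnegative matrices and hence nonnegative. I do not expect any genuine obstacle here; the statement is a bookkeeping identity and the argument is essentially a one-line algebraic manipulation, with the degenerate entries $v_j(k)=0$ absorbed cleanly by the hypothesis $u_i(k)=0$ whenever $v_i(k)=0$.
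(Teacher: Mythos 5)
Your proof is correct and follows essentially the same route as the paper's: a direct coordinate computation in which the zero-pattern hypothesis is used exactly once, to justify the identity $u_j(k) = r_j(k)\,v_j(k)$ (equivalently, your $v_j(k)\,v_j^-(k)\,u_j(k) = u_j(k)$) at the degenerate entries. The only difference is bookkeeping: the paper substitutes $u_i = r_i v_i$ into the recursion and then multiplies by $v_i^-(k+1)$, while you expand both sides of the claimed identity and match them term by term; the algebra is the same.
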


\begin{proof}
	Since $u_i(k)=0$ if $v_i(k)=0$, $u_i(k)=r_i(k)v_i(k)$ holds for all $i,k$.
	Substituting in \eqref{eq: uAu} we obtain,
	\begin{align*}
	r_i(k+1)v_i(k+1) = \sum_{j=1}^{d}A_{ij}(k)r_j(k)v_j(k).
	\end{align*}
	Since, by definition $r_i(k)=0$ if $v_i(k)=0$, $\forall k,i$, we get
	\begin{align*}
	r_i(k+1) = v_i^-(k+1)\sum_{j=1}^{d}A_{ij}(k)r_j(k)v_j(k).
	\end{align*}
	Therefore,
	\begin{align*}
	\br(k+1)={\rm diag}(\bv^-(k+1))\bA(k) {\rm diag}(\bv(k))\br(k).
	\end{align*}
\end{proof}

Our next corollary, which follows immediately from the previous lemma, characterizes the dichotomy inherent in push-sum with virtual nodes: every row either adds up to one or zero.

\begin{corollary}\label{col: B stochastic}
	Consider the matrix $\bB(k)$ defined in Lemma \ref{lem: uAu}.
	Let us define the index set $J^k:=\{i |\, v_i(k)\neq 0 \}.$
	If $i\notin J^k$, the $i$th column of $\bB(k)$ and $i$th row of $\bB(k-1)$ only contain zero entries.
	Moreover,
	\begin{align*}
	\bB(k)\mathbf{1}_d&={\rm diag}(\bv^-(k+1))\bA(k)\bv(k)\\
	&={\rm diag}(\bv^-(k+1))\bv(k+1)=
	\begin{bmatrix}
	\text{$1$ or $0$}\\
	\vdots\\
	\text{$1$ or $0$}
	\end{bmatrix}.
	\end{align*}
	Hence, the $i$th row of $\bB(k)$ sums to $1$ if and only if $\bv_i(k+1)\neq 0$ or $i\in J^{k+1}$. 
\end{corollary}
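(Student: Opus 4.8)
The plan is to read off all three assertions directly from the factored form $\bB(k) = {\rm diag}(\bv^-(k+1))\,\bA(k)\,{\rm diag}(\bv(k))$, exploiting the fact that pre- and post-multiplication by diagonal matrices merely scales rows and columns, respectively. Writing out a single entry, $[\bB(k)]_{pq} = v_p^-(k+1)\,A_{pq}(k)\,v_q(k)$, makes all of the bookkeeping transparent.

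For the column claim, I would fix $i \notin J^k$, so that $v_i(k)=0$. Every entry of the $i$th column of $\bB(k)$ carries the factor $v_i(k)$, hence $[\bB(k)]_{pi}=v_p^-(k+1)A_{pi}(k)v_i(k)=0$ for all $p$, and the column vanishes. For the row claim I would instead use the defining property of $\bv^-$: since $v_i(k)=0$ forces $v_i^-(k)=0$, and every entry of the $i$th row of $\bB(k-1) = {\rm diag}(\bv^-(k))\bA(k-1){\rm diag}(\bv(k-1))$ carries the factor $v_i^-(k)$, that row vanishes as well. These two arguments are dual: one uses the right diagonal factor $v_i(k)$, the other the left diagonal factor $v_i^-(k)$, both of which are zero precisely because $i \notin J^k$.

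For the row sums, I would compute $\bB(k)\mathbf{1}_d$ by pushing $\mathbf{1}_d$ through the factored form. Since ${\rm diag}(\bv(k))\mathbf{1}_d = \bv(k)$ and the recursion $\bv(k+1)=\bA(k)\bv(k)$ from Lemma \ref{lem: uAu} applies, the expression collapses to ${\rm diag}(\bv^-(k+1))\bv(k+1)$, whose $i$th entry is $v_i^-(k+1)v_i(k+1)$. The only thing to verify is the elementary dichotomy built into the definition of $\bv^-$: this product equals $1$ when $v_i(k+1)\neq 0$ and equals $0$ when $v_i(k+1)=0$, and nothing else. Reading off when the row sum equals $1$ then reduces to the condition $v_i(k+1)\neq 0$, i.e.\ $i\in J^{k+1}$.

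There is no substantive obstacle here; as the surrounding text notes, the corollary follows immediately from Lemma \ref{lem: uAu}. The only point demanding genuine care is to invoke the definition of $\bv^-$ at the two places where a zero entry of $\bv$ must yield a zero rather than an undefined reciprocal: once to annihilate the $i$th row of $\bB(k-1)$, and once to force the dichotomy $v_i^-(k+1)v_i(k+1)\in\{0,1\}$.
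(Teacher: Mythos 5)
Your proof is correct and follows exactly the route the paper intends: the paper treats this corollary as immediate from Lemma~\ref{lem: uAu}, with the displayed identity $\bB(k)\mathbf{1}_d={\rm diag}(\bv^-(k+1))\bA(k)\bv(k)={\rm diag}(\bv^-(k+1))\bv(k+1)$ serving as the entire argument for the row-sum dichotomy. Your entry-wise verification of the vanishing column (via $v_i(k)=0$) and vanishing row (via $v_i^-(k)=0$) simply makes explicit the same diagonal-scaling observations the paper leaves to inspection.
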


Our next lemma characterizes the relationship between zero entries in the vectors $\bchi(k)$ and $\bpsi(k)$.

\begin{lemma} \label{lem: x=0 if y=0}
	$\chi_h(k)=0$ whenever $\psi_h(k)=0$ for $h=1,\ldots,n+m'$, $k\geq 0$.
\end{lemma}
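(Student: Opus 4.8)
The plan is to argue by induction on $k$, exploiting the fact that $\bchi$ and $\bpsi$ are driven by the \emph{same} non-negative matrix $\bM(k)$, together with the sign structure of $\bpsi$. First I would record a preliminary observation that makes the whole argument run: since $\bpsi(0)$ equals $1$ on the non-virtual coordinates $1,\ldots,n$ (because $\by(0)=\mathbf 1$) and $0$ on the virtual ones (by the initialization of $u_{ij}^y$ and $x_{ij}^l$), we have $\bpsi(0)\geq 0$; and because each $\bM(k)$ is column stochastic, hence non-negative, by Lemma \ref{lem: M stochastic}, the recursion $\bpsi(k+1)=\bM(k)\bpsi(k)$ preserves non-negativity. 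Thus $\psi_h(k)\geq 0$ for every $h$ and every $k$. This sign information is the key feature that turns ``a sum of products vanishes'' into ``every product vanishes.''

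For the base case $k=0$, the non-virtual coordinates $h\in\{1,\ldots,n\}$ satisfy $\psi_h(0)=1\neq 0$, so the implication holds vacuously there, while on each virtual coordinate both $\chi_h(0)$ and $\psi_h(0)$ equal $0$ by initialization; hence the claim holds at $k=0$.

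For the inductive step, assume $\chi_h(k)=0$ whenever $\psi_h(k)=0$, and suppose $\psi_h(k+1)=0$ for some index $h$. Writing $\psi_h(k+1)=\sum_j M_{hj}(k)\psi_j(k)$ as a sum of non-negative terms (using $\bM(k)\geq 0$ and $\bpsi(k)\geq 0$), each summand must vanish: for every $j$, either $M_{hj}(k)=0$ or $\psi_j(k)=0$. Now I would examine $\chi_h(k+1)=\sum_j M_{hj}(k)\chi_j(k)$ term by term. If $M_{hj}(k)=0$ the term is zero; and if $M_{hj}(k)\neq 0$, then necessarily $\psi_j(k)=0$, so the inductive hypothesis yields $\chi_j(k)=0$ and the term again vanishes. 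Hence $\chi_h(k+1)=0$, which closes the induction.

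There is no genuine obstacle here; the only points requiring care are the non-negativity of $\bpsi$ (which is precisely what converts the vanishing of $\psi_h(k+1)$ into the coordinatewise conditions on $\bpsi(k)$) and the clean split in the base case between the virtual and non-virtual coordinates. Conceptually, the statement is a support-propagation fact: the set of coordinates on which $\bchi$ is nonzero is contained in the set on which $\bpsi$ is nonzero, and this containment is carried forward by the common non-negative transition matrix $\bM(k)$.
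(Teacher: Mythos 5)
Your proof is correct, and it is in fact more rigorous than the one the paper gives. The paper's proof is an informal mass-flow argument: after noting $\bpsi(0)=[\mathbf{1}_n^{\top},\mathbf{0}_{m'}^{\top}]^{\top}$ and $\psi_h(k)\geq 0$, it treats the virtual coordinates semantically, saying that if $\psi_h(k)=0$ then the virtual agent $a_h$ has either already forwarded all of its $y$-mass or never received any, and in either case its $x$-mass is gone as well. That reasoning implicitly leans on the specific structure of the virtual-node updates in \eqref{eq:bd}, \eqref{eq:prop} and \eqref{eq: U def} (a virtual node's $x$- and $y$-values are loaded and reset simultaneously) and is left at the level of intuition. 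Your induction replaces this with a purely algebraic support-propagation argument: since $\bchi$ and $\bpsi$ are driven by the \emph{same} non-negative matrices (non-negativity coming from column stochasticity, Lemma \ref{lem: M stochastic}) and the support of $\bchi(0)$ is contained in that of $\bpsi(0)$, the vanishing of $\psi_h(k+1)=\sum_j M_{hj}(k)\psi_j(k)$ forces, termwise, $M_{hj}(k)=0$ or $\psi_j(k)=0$, and the inductive hypothesis then annihilates every term of $\chi_h(k+1)=\sum_j M_{hj}(k)\chi_j(k)$. What your route buys is rigor and generality: it uses nothing about the algorithm beyond non-negativity of $\bM(k)$ and the initialization, and it treats virtual and non-virtual coordinates uniformly. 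What the paper's route buys is brevity and the intuition behind the statement, and it records in passing the strict positivity $\psi_i(k)>0$ for $i=1,\ldots,n$, a fact reused elsewhere (e.g., in Lemma \ref{lem: Y lower bound}); your proof neither needs nor produces that extra fact.
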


\begin{proof}
	First we note that $\bpsi(0) = [\mathbf{1}_n^{\top},\mathbf{0}_{m'}^{\top} ]^{\top}$ and each node $i\in \V$ has a self-loop in graph $\H(k)$ for all $k\geq 0$; hence, $\psi_h(k)\geq 0$ for all $h$ and particularly, $\psi_i(k)> 0$ for $i=1,\ldots,n$.
	Now suppose $h>n$ and corresponds to a virtual agent $a_h\in \V_A$.
	If $\psi_h(k)=0$, it means $a_h$ has already sent all its $y$-value to another node or has not received any $y$-value yet. In either case, that node also has no remaining $x$-value as well and $\chi_h(k)=0$.
\end{proof}

Let us define $\bpsi^-(k)\in \R^{n+m'}$, $k \geq 0$ by
\begin{align}\label{eq: Y-}
\psi_i^-(k):=\begin{cases}
1/\psi_i(k), \qquad &\text{if } \psi_i(k)\neq 0,\\
0,\qquad &\text{if } \psi_i(k)= 0.
\end{cases}
\end{align}
Moreover, we define the vector $\bz(k)$ by setting $\bz(k):=\bchi(k) \circ \bpsi^-(k)$.
By \eqref{eq: XMX} and Lemma \ref{lem: x=0 if y=0}, we can use Lemma \ref{lem: uAu} to obtain,
\begin{align*}
\bz(k+1)=\bP(k)\bz(k),
\end{align*}
where $\bP(k):= {\rm diag}(\bpsi^-(k+1))\bM(k) {\rm diag}(\bpsi(k))$. Let us define $$I^k:=\{i | \, \psi_i(k)>0\}.$$ Then, by Corollary \ref{col: B stochastic} we have each $z_i(k+1)$, $i \in I^{k+1}$, is a convex combination of $z_j(k)$'s for $j\in I^k$. Therefore,
\begin{align}\label{eq: Z sandwich}
\begin{split}
\max_{i\in I^{k+1}}z_i(k+1)\leq \max_{i\in I^{k}}z_i(k),\\
\min_{i\in I^{k+1}}z_i(k+1)\geq \min_{i\in I^{k}}z_i(k).
\end{split}
\end{align}

These equations will be key to the analysis of the algorithm. We stress that we have not shown that the quantity $\min_{i} z_i(k)$ is nondecreasing; rather, we have shown that the related quantity, where the minimum is taken over $I^k$, the set of nonzero entries of $\bpsi(k)$, is nonincreasing.

Our next lemma provides lower and upper bounds on the entries of the vector $\bpsi(k)$.

\begin{lemma}\label{lem: Y lower bound}
	For $k\geq 0$ and $1\leq i\leq n$ we have:
	\begin{align*}
	n\alpha \leq \psi_i(k) \leq n. 
	\end{align*}
	Moreover, for $n+1\leq h\leq n+m'$ and $k\geq 1$ we have either $\psi_h(k)=0$ or,
	\begin{align*}
	n\alpha^2\leq \psi_h(k) \leq n.
	\end{align*}
\end{lemma}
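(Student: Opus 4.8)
The plan is to handle the upper bound, the lower bound on non-virtual coordinates, and the lower bound on virtual coordinates separately, all starting from the fact that $\bpsi(k)=\bM^{k-1:0}\bpsi(0)$ with $\bpsi(0)=[\1_n^\top,\mathbf{0}_{m'}^\top]^\top$. Since each $\bM(k)$ is nonnegative (Lemma \ref{lem: M stochastic}) and $\bpsi(0)\geq 0$, induction gives $\bpsi(k)\geq 0$; combined with the sum-preservation identity \eqref{eq: sum preservation}, which states $\sum_{h=1}^{n+m'}\psi_h(k)=n$, this immediately yields the upper bound $\psi_h(k)\leq n$ for all $h$ and $k$. The same initialization also shows, because $\psi_j(0)=0$ for $j>n$, that $\psi_i(k)=\sum_{j=1}^n[\bM^{k-1:0}]_{ij}$ in every coordinate.

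For the lower bound on a non-virtual coordinate $1\leq i\leq n$ I would split on the size of $k$. When $k\geq nL_s$, write $\psi_i(k)=\sum_h [\bM^{k-1:k-nL_s}]_{ih}\,\psi_h(k-nL_s)$ and invoke Lemma \ref{lemma:positive n-rows} at time $k-nL_s\geq 0$: its first $n$ rows are positive with every positive entry at least $\alpha$, so $[\bM^{k-1:k-nL_s}]_{ih}\geq\alpha$ for all $h$. Summing against $\bpsi(k-nL_s)$ and using $\sum_h\psi_h(k-nL_s)=n$ gives $\psi_i(k)\geq n\alpha$. For the transient range $0\leq k<nL_s$ this is unavailable, so I would retain only the self-contribution: each non-virtual node carries a self-loop with $M_{ii}(k)\geq 1/(\max_i d_i^+ +1)\geq 1/n$ (Lemma \ref{lem: M stochastic}), hence $\psi_i(k)\geq M_{ii}(k-1)\psi_i(k-1)$ and, by induction from $\psi_i(0)=1$, $\psi_i(k)\geq (1/n)^k$. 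Since $1/n\leq 1$, one has $(1/n)^k\geq (1/n)^{nL_s-1}=n\alpha$ precisely when $k\leq nL_s-1$, which covers the whole transient and matches the threshold from the first regime.

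For a virtual coordinate the claim is a dichotomy, so I would show that a positive virtual entry must inherit a recent contribution from a non-virtual node. Using the $y$-versions of the explicit recurrences \eqref{eq: xl-linear} and \eqref{eq: u-linear}: a $b$-type entry $y_{ij}^l(k)$, when nonzero, equals the single active term $\tau_{ij}^{\,\cdot}(k-t)\,\mu_{ij}^y(k-t)$ with $\mu_{ij}^y(\cdot)=u_{ij}^y(\cdot)+\tau_i(\cdot)\,y_i(\cdot)/(d_i^+ +1)$; nonzero-ness forces $\tau_i(k-t)=1$ and $k-t\geq 0$, so the value is at least $y_i(k-t)/(d_i^+ +1)\geq (n\alpha)/n=\alpha$ by the non-virtual bound just proved. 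For a $c$-type entry $u_{ij}^y$, an induction does the same job: it acquires mass only through the term $\tau_i(\cdot)\,y_i(\cdot)/(d_i^+ +1)\geq\alpha$ and afterwards only accumulates or is reset, so whenever it is positive it is at least $\alpha$. In every case a positive virtual entry is at least $\alpha$, and since $nL_s\geq 1$ gives $\alpha=(1/n)^{nL_s}\geq (1/n)^{2nL_s-1}=n\alpha^2$, the (deliberately loose) bound $n\alpha^2$ in the statement follows.

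The main obstacle is the virtual-node case. The difficulty is bookkeeping rather than a single inequality: virtual coordinates are genuinely allowed to vanish, so one must first pin down exactly when each is positive, and the two families $b_{ij}^l$ and $c_{ij}$ obey different update rules, so the trace-back to a non-virtual source must be carried out for each. The structural facts that make it go through — that mass enters the virtual subsystem from a non-virtual node with coefficient at least $1/(d_i^+ +1)$, and that every subsequent virtual-to-virtual transition has coefficient one, so no further decay occurs — are exactly the coefficients computed in the proof of Lemma \ref{lem: M stochastic}; reconciling them with $\psi_i(\cdot)\geq n\alpha$ is what yields the reported bound.
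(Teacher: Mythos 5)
Your proposal is correct, and its first two parts (the upper bound via sum preservation, and the non-virtual lower bound obtained by splitting at $k=nL_s$, using self-loops with $M_{ii}(k)\geq 1/n$ for the transient and Lemma \ref{lemma:positive n-rows} plus column stochasticity for $k\geq nL_s$) coincide with the paper's own argument up to presentation. Where you genuinely diverge is the virtual-node dichotomy. The paper stays inside the matrix-product framework: it observes that any positive virtual mass at time $k$ must have been injected by the source node $i$ at time $k-nL_s$ or later (since every message resolves within $L_s\leq nL_s$ iterations), then uses reachability of $a_h$ from $i$ via self-loops to get $M_{hi}^{k-1:k-nL_s}\geq\alpha$, and concludes $\psi_h(k)\geq\alpha\,\psi_i(k-nL_s)\geq n\alpha^2$. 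You instead trace positive virtual mass back through the explicit recurrences \eqref{eq: xl-linear} and \eqref{eq: u-linear}: injection into the virtual subsystem carries coefficient at least $1/(d_i^++1)\geq 1/n$ against a source value $y_i\geq n\alpha$, and all subsequent virtual-to-virtual transfers have coefficient one, so any positive virtual entry is at least $\alpha$, which dominates $n\alpha^2$ since $n\alpha\leq 1$. Your route is more elementary — it needs neither the "messages resolve within $L_s$ steps" window bookkeeping nor bounds on entries of the product $\bM^{k-1:k-nL_s}$ — and it yields a strictly sharper constant ($\alpha$ versus $n\alpha^2$ for $n\geq 2$); the cost is a case analysis over the two families of virtual nodes ($b_{ij}^l$ via the closed form \eqref{eq:xl}, $c_{ij}$ via induction), whereas the paper's argument is shorter given the machinery already in place and treats all virtual nodes uniformly. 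One cosmetic remark: for the $b$-type entries you do not actually need the "single active term" property (which rests on Lemma \ref{lem: delay}); nonnegativity of all terms in \eqref{eq:xl} already gives the lower bound from any one active term, so your argument is robust even without that structural fact.
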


\begin{proof}
	We have,
	\begin{align*}
	\bpsi(k)=\bM^{k-1:0}
	\begin{bmatrix}
	\mathbf{1}_n\\\mathbf{0}_{m'}
	\end{bmatrix},
	\end{align*}
	If $k<nL_s$, positive entries of $\bM^{k-1:0}$ are at least $(1/n)^k$. Hence, positive entries of $\bpsi(k)$ are at least,
	\begin{align*}
	\left(\frac{1}{n}\right)^k \geq \left(\frac{1}{n}\right)^{nL_s-1}=n\alpha.
	\end{align*}
	Now suppose $k\geq nL_s$. $\bM^{k-1:0}$ is the product of $\bM^{k-1:k-nL_s}$ and another column stochastic matrix. By Lemma \ref{lemma:positive n-rows}, $\bM^{k-1:k-nL_s}$ has positive first $n$ rows, and positive entries of at least $\alpha$. Thus, $\bM^{k-1:0}$ has positive first $n$ rows, and positive entries of at least $\alpha$ as well. We obtain for $1\leq i\leq n$,
	\begin{align*}
	\psi_i(k)\geq n \alpha,\text{ for } k\geq 1.
	\end{align*}
	For $n+1\leq h\leq n+m'$, suppose $\psi_h$ corresponds to a virtual node $a_h$ corresponding to some link $(i,j)\in \Ed$. If $\psi_h(k)$ is positive, it is carrying a value sent from $i$ at $k-nL_s$ or later, which has experienced link failure or delays. This is because each value gets to its destination after at most $L_s$ iterations. Since $i$ has self-loops all the time, $a_h$ is reachable from $i$ in period $k-nL_s$ to $k-1$; Hence, $M_{hi}^{k-1:k-nL_s}\geq \alpha$, and it follows,
	\begin{equation*}
	\psi_h(k)\geq\alpha \psi_i(k-nL_s)\geq n\alpha^2.
	\end{equation*}
	Also, due to sum preservation property, we have $\psi_h(k)\leq n$, for all $h$ and $k\geq 0$.
\end{proof}

Using Lemma \ref{lem: uAu} again, it follows,
\begin{align*}
\bz(k+nL_s)=\hat{\bP}(k)\bz(k),
\end{align*}
where,
\begin{align}\label{eq: tP def}
\hat{\bP}(k):= {\rm diag}(\bpsi^-(k+nL_s))\bM^{k+nLs-1:k} {\rm diag}(\bpsi(k)).
\end{align}
Next, we are able to find a lower bound on the positive elements of $\hat{\mathbf{P}}(k)$. The proof of the following corollary is immediate.

\begin{corollary}\label{col: P}
	By \eqref{eq: tP def} and Lemma \ref{lem: Y lower bound} we have:
	\begin{enumerate}
		\item[(a)] $\hat{P}_{ij}(k)>0$ for $1\leq i,j \leq n$.
		\item[(b)] Positive entries of first $n$ columns of $\hat{P}(k)$ are at least $(1/n)\alpha (n\alpha)=\alpha^2$. Similarly, the last $m'$ columns have positive entries of at least $\alpha^3$.
		\item[(c)] For $h>n$, if $h\in {I}^{k+nL_s}$ then $\hat{P}_{hi}(k)>0$ for some $1\leq i \leq n$.
	\end{enumerate}
\end{corollary}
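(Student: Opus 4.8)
The plan is to prove all three parts by reducing everything to the scalar form of the entries of $\hat{\bP}(k)$. Writing $\bar{\bM}:=\bM^{k+nL_s-1:k}$, the definition \eqref{eq: tP def} gives the entrywise identity
\[ \hat{P}_{ij}(k) = \psi_i^-(k+nL_s)\,\bar{M}_{ij}\,\psi_j(k), \]
so each entry is positive exactly when all three of its factors are, and every lower bound is obtained by multiplying lower bounds on the individual factors. Before handling the three claims, I would record the ingredients I need: from Lemma \ref{lemma:positive n-rows}, $\bar{\bM}$ has positive first $n$ rows and each of its positive entries is at least $\alpha$; from Lemma \ref{lem: Y lower bound}, $n\alpha\le\psi_i(\cdot)\le n$ for $1\le i\le n$, while for $h>n$ either $\psi_h(\cdot)=0$ or $n\alpha^2\le\psi_h(\cdot)\le n$; and consequently $\psi_i^-(\cdot)\ge 1/n$ whenever it is nonzero, since $\psi_i(\cdot)\le n$ always holds by sum preservation.

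For part (a), fix $1\le i,j\le n$. Then $\psi_j(k)\ge n\alpha>0$ and $\psi_i^-(k+nL_s)\ge 1/n>0$ by Lemma \ref{lem: Y lower bound} (note $k+nL_s\ge nL_s\ge 1$), and $\bar{M}_{ij}>0$ because the row index $i\le n$ lies in the positive first $n$ rows of $\bar{\bM}$ by Lemma \ref{lemma:positive n-rows}. Hence the product is strictly positive. For part (b), take any positive entry $\hat{P}_{ij}(k)$; positivity forces $\bar{M}_{ij}\ge\alpha$, $\psi_i^-(k+nL_s)\ge 1/n$, and $\psi_j(k)>0$. If $j\le n$ then $\psi_j(k)\ge n\alpha$, giving $\hat{P}_{ij}(k)\ge(1/n)\alpha(n\alpha)=\alpha^2$; if $j>n$ then positivity of $\psi_j(k)$ forces $\psi_j(k)\ge n\alpha^2$ by Lemma \ref{lem: Y lower bound}, giving $\hat{P}_{ij}(k)\ge(1/n)\alpha(n\alpha^2)=\alpha^3$. (For $k=0$ the last $m'$ columns vanish, so that case is vacuous.)

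Part (c) is the only place requiring more than arithmetic on the three factors. Fix $h>n$ with $h\in I^{k+nL_s}$, i.e. $\psi_h(k+nL_s)>0$, and let $a_h$ be the virtual node attached to some link $(i,j)\in\Ed$. I would reuse the reachability argument from the proof of Lemma \ref{lem: Y lower bound}: a positive value $\psi_h(k+nL_s)$ must carry mass injected by node $i$ no earlier than time $(k+nL_s)-nL_s=k$, and since $i$ carries a self-loop at every step, $a_h$ is reachable from $i$ over the window $k$ to $k+nL_s-1$, whence $\bar{M}_{hi}=M_{hi}^{k+nL_s-1:k}\ge\alpha>0$. Combining this with $\psi_h^-(k+nL_s)>0$ and $\psi_i(k)\ge n\alpha>0$ (as $i\le n$) yields $\hat{P}_{hi}(k)>0$ for this particular $i\le n$, proving (c). The main, and only mild, subtlety of the whole corollary is precisely this reachability claim for the virtual node: it does not follow from the generic entry bounds but instead relies on the mass-injection and self-loop structure established earlier, while everything else is bookkeeping on the three scalar factors.
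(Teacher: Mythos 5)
Your proof is correct and is exactly the argument the paper intends: the paper itself gives no proof (it declares the corollary ``immediate''), and your entrywise factorization $\hat{P}_{ij}(k) = \psi_i^-(k+nL_s)\,[\bM^{k+nL_s-1:k}]_{ij}\,\psi_j(k)$ combined with Lemma \ref{lemma:positive n-rows} and Lemma \ref{lem: Y lower bound} is the natural fill-in, including the observation that the $k=0$ case of the last $m'$ columns is vacuous. Your handling of part (c) is also right, and you correctly flag the one non-immediate point: it reuses the reachability/self-loop argument from \emph{inside} the proof of Lemma \ref{lem: Y lower bound} (applied to the window $k$ to $k+nL_s-1$), not merely that lemma's statement.
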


Our next lemma, which is the final result we need before proving the exponential convergence rate of RAPS, provides a quantitative bound for how multiplication by the matrix $\bP$ shrinks the range of a vector.

\begin{lemma}\label{lem: s_t}
	Let $t\geq 0$ and $\{\bu(k)\}_{k \geq 0} \in \R^{n+m'}$ be a sequence of vectors such that,
	\begin{align*}
	\bu(k+1) = \hat{\bP}(knL_s+t)\bu(k).
	\end{align*}
	Define
	\begin{align*}
	s_t(k)&:=\max_{i \in I^{knL_s+t}}u_i(k) - \min_{i\in I^{knL_s+t}}u_i(k).
	\end{align*}
	Then,
	\begin{align*}
	s_t(k+2)\leq (1-n\alpha^6)s_t(k).
	\end{align*}
\end{lemma}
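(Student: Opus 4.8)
The plan is to work with the two-step product ${\mathbf Q} := \hat{\bP}((k+1)nL_s+t)\,\hat{\bP}(knL_s+t)$, so that $\bu(k+2) = {\mathbf Q}\,\bu(k)$, and to show that ${\mathbf Q}$ acts as a strong contraction on the range $s_t$. First I would record the structural facts about the two factors. Writing $\hat{\bP}_1 := \hat{\bP}(knL_s+t)$ and $\hat{\bP}_2 := \hat{\bP}((k+1)nL_s+t)$, Corollary~\ref{col: B stochastic} (applied exactly as in the derivation of \eqref{eq: Z sandwich}, now to the $nL_s$-step matrices) gives the row dichotomy: the $i$th row of $\hat{\bP}_1$ sums to $1$ precisely when $i\in I^{(k+1)nL_s+t}$ and to $0$ otherwise, while the $i$th row of $\hat{\bP}_2$ sums to $1$ precisely when $i\in I^{(k+2)nL_s+t}$; moreover the $j$th column of $\hat{\bP}_1$ vanishes unless $j\in I^{knL_s+t}$. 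Consequently, for every $i\in I^{(k+2)nL_s+t}$ the $i$th row of ${\mathbf Q}$ is a probability vector supported on $I^{knL_s+t}$, i.e. $u_i(k+2)=\sum_{j\in I^{knL_s+t}}Q_{ij}u_j(k)$ is a convex combination of exactly the entries counted in $s_t(k)$.

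The heart of the argument is a uniform lower bound on a block of entries of ${\mathbf Q}$: I claim $Q_{ij}\ge \alpha^4$ for every active $i\in I^{(k+2)nL_s+t}$ and every non-virtual index $j\in\{1,\dots,n\}$. To see this I would route the product $Q_{ij}=\sum_{l}(\hat{\bP}_2)_{il}(\hat{\bP}_1)_{lj}$ through a single non-virtual intermediate index $l\le n$. Such an $l$ exists with $(\hat{\bP}_2)_{il}\ge \alpha^2$: if $i\le n$ this is Corollary~\ref{col: P}(a)--(b), and if $i>n$ is active it is Corollary~\ref{col: P}(c), whose guaranteed positive entry lies in the first $n$ columns and is therefore at least $\alpha^2$ by~(b). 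This intermediate index is legitimate because the first $n$ coordinates are always active, as Lemma~\ref{lem: Y lower bound} gives $\psi_l\ge n\alpha>0$ for $l\le n$, so $l\in I^{(k+1)nL_s+t}$ where $\hat{\bP}_1$ has a nonzero row. Finally, since $l\le n$ and $j\le n$, Corollary~\ref{col: P}(a)--(b) give $(\hat{\bP}_1)_{lj}\ge\alpha^2$, whence $Q_{ij}\ge(\hat{\bP}_2)_{il}(\hat{\bP}_1)_{lj}\ge\alpha^4\ge\alpha^6$.

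With this block bound in hand, the contraction follows from the standard overlap estimate for probability vectors. For any two active indices $i,i'\in I^{(k+2)nL_s+t}$, the shared mass satisfies $\sum_{j}\min(Q_{ij},Q_{i'j})\ge\sum_{j=1}^{n}\min(Q_{ij},Q_{i'j})\ge n\alpha^4\ge n\alpha^6$. Splitting $Q_{i\cdot}-Q_{i'\cdot}$ into positive and negative parts and using that both rows sum to $1$ then yields $u_i(k+2)-u_{i'}(k+2)\le\big(1-\sum_j\min(Q_{ij},Q_{i'j})\big)\,s_t(k)\le(1-n\alpha^6)s_t(k)$. Choosing $i$ and $i'$ to attain the maximum and minimum defining $s_t(k+2)$ then gives the claimed inequality.

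I would expect the only genuine obstacle to be the careful bookkeeping of the three index sets $I^{knL_s+t}$, $I^{(k+1)nL_s+t}$, $I^{(k+2)nL_s+t}$: one must verify that the intermediate routing index can always be chosen among the first $n$ (non-virtual) nodes despite the dichotomy that zeroes out some rows and columns, which is exactly where Corollary~\ref{col: P}(c) together with the permanent activity of the first $n$ coordinates from Lemma~\ref{lem: Y lower bound} do the work. Everything else is routine; note in particular that the stated factor $(1-n\alpha^6)$ is a (deliberately loose) consequence of the sharper $(1-n\alpha^4)$ obtained above, since $\alpha<1$ and $s_t(k)\ge 0$.
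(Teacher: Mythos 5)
Your proposal is correct, and it takes a genuinely different route from the paper. The paper never forms the two-step product matrix: instead it introduces the auxiliary quantity $r_t(k):=\max_{1\leq i \leq n}u_i(k) - \min_{1\leq i \leq n}u_i(k)$ (the range over the non-virtual coordinates only) and chains two one-step inequalities, namely $s_t(k+1)\leq \alpha^3 r_t(k)+(1-\alpha^3)s_t(k)$ (every active row of $\hat{\bP}$ puts mass at least $\alpha^3$ on the first $n$ columns, by Corollary \ref{col: P}) and $r_t(k+1)\leq (1-n\alpha^3)s_t(k)$ (the non-virtual $n\times n$ block of $\hat{\bP}$ is entrywise at least $\alpha^3$), which combine to give exactly $(1-n\alpha^6)$. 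Your argument instead composes the two steps into ${\mathbf Q}$, proves the uniform block bound $Q_{ij}\geq\alpha^4$ for active $i$ and $j\leq n$ by routing through a non-virtual intermediate index (legitimized, as you say, by Corollary \ref{col: P}(c) and the permanent positivity of $\psi_l$ for $l\leq n$ from Lemma \ref{lem: Y lower bound}), and then invokes the standard Dobrushin-type overlap estimate. Both proofs rest on the same structural facts (Corollaries \ref{col: B stochastic} and \ref{col: P}), but your route is more modular — it reduces the lemma to a textbook ergodicity-coefficient computation — and yields the sharper constant $(1-n\alpha^4)$, of which the stated $(1-n\alpha^6)$ is a weakening; the paper's route avoids having to verify the row-sum and support structure of the product matrix, at the cost of the auxiliary bookkeeping with $r_t$ and a looser constant.
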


\begin{proof}
	Let us define
	\[ r_t(k):=\max_{1\leq i \leq n}u_i(k) - \min_{1\leq i \leq n}u_i(k).\]
	By Corollary \ref{col: P} for $j\in I^{(k+1)nL_s+t}$, the $j$th row of $\hat{\bP}(knL_s+t)$ has at least one positive entry in the first $n$ columns. Thus, because $u_j(k+1)$ is maximized/minimized when all of the weight is put on the largest/smallest possible entry of $u_j(k)$, we have:
	\begin{align*}
	u_j(k+1)\leq \alpha^3 \max_{1\leq i \leq n} u_i(k) + (1-\alpha^3) \max_{i \in I^{knL_s+t}} u_i(k),\\ 
	u_j(k+1) \geq \alpha^3 \min_{1\leq i \leq n} u_i(k) + (1-\alpha^3) \min_{i \in I^{knL_s+t}} u_i(k),
	\end{align*}
	Therefore,
	\begin{align}\label{eq: s_t}
	s_t(k+1)\leq \alpha^3 r_t(k)+ (1-\alpha^3)s_t(k).
	\end{align}
	Moreover, by a similar argument for $j \leq n$,
	\begin{align*}
	u_j(k+1)\leq \alpha^3 \sum_{i=1}^n u_i(k) + (1-n\alpha^3) \max_{i \in I^{knL_s+t}} u_i(k),\\ 
	u_j(k+1) \geq \alpha^3 \sum_{i=1}^n u_i(k) + (1-n\alpha^3) \min_{i \in I^{knL_s+t}} u_i(k).
	\end{align*}
	Thus,
	\[
	r_t(k+1)\leq (1-n\alpha^3)s_t(k).
	\]
	Combining with \eqref{eq: s_t} and noting that $r_t(k)\leq s_t(k)$ and $s_t(k+1)\leq s_t(k)$ we obtain,
	\begin{align*}
	s_t(k+2)&\leq \alpha^3(1-n\alpha^3)s_t(k)+(1-\alpha^3)s_t(k+1)\\
	&\leq \alpha^3(1-n\alpha^3)s_t(k)+(1-\alpha^3)s_t(k)\\
	&= (1-n\alpha^6)s_t(k).
	\end{align*}
\end{proof}

\begin{proof}\textbf{of Theorem \ref{The: geometric push-sum}}
	Using Lemma \ref{lem: s_t} with $t=0$ and $\bu(k)=\bz(knL_s)$ we get  $s_0(k) \leq (1-n \alpha^6)^{\lfloor k/2\rfloor}s_0(0)$ and $\lim_{k \rightarrow \infty}s_0(k)=0$. 
	Moreover by \eqref{eq: Z sandwich},  ${\bz}_{\max}(k)$ is a non-increasing sequence and by ${\bz}_{\min}(k)$, is non-decreasing. Thus,
	\begin{equation}
	\lim_{k\to \infty,\; h\in {I}^k }{\bz}_h(k) = L_\infty. \label{eq:lim rz}
	\end{equation}
	We have:
	{\small
		\begin{align*}
		L_\infty &= L_\infty \lim_{k \to \infty} \frac{\sum_{i=1}^{n+m'} \psi_i(k) }{\sum_{i=1}^{n+m'} \psi_i(k)}\\
		&= \lim_{k \to \infty}\Bigg(\frac{\sum_{i=1}^{n+m'} z_i(k)\psi_i(k) }{n} 
		+ \frac{\sum_{i=1}^{n+m'} (L_\infty-z_i(k))\psi_i(k)}{n}\Bigg)\\
		&= \lim_{k \to \infty}\left(\frac{\sum_{i=1}^{n+m'} \chi_i(k)}{n}
		+ \frac{\sum_{i=1}^{n+m'} (L_\infty-z_i(k))\psi_i(k)}{n}\right)\\
		&= \frac{\sum_{i=1}^n x_i(0)}{n}.
		\end{align*}
	}
	
	In the above, we used \eqref{eq: sum preservation} and \eqref{eq:lim rz}, the boundedness of $\psi_i(k)$, and the fact that  $\psi_i(k)=0$ for $i \notin I^k$.
	
	Finally, to show the exponential convergence rate, we go back to $s_0(k)$. We have for $ k\geq 1$,
	\begin{align*}
	s_0(k) &\leq (1-n \alpha^6)^{\lfloor k/2\rfloor}s_0(0) \leq (1-n \alpha^6)^{(k-1)/2} s_0(0),\\ 
	s_0(0)&\leq \sum_{i=1}^{n+m'} \vert z_i(0)\vert = \sum_{i=1}^n \vert x_i(0)\vert = \Vert \bx(0) \Vert_1,
	\end{align*}
	where the first equality holds because ${I}^0=\{1,\ldots,n \}$ and $y_i(0)=1$.
	Therefore, we have for $i\in I^k$,
	\begin{align*}
	\left| z_i(k)-\frac{\mathbf{1}^{\top}\bx(0)}{n}\right| &\leq 
	z_{\max}(k)-z_{\min}(k) \\ 
	&\leq s_0(\lfloor k/nL_s \rfloor) \\
	&\leq (1-n\alpha^6)^{(\lfloor \frac{k}{nL_s} \rfloor -1)/2} \Vert \bx(0)\Vert_1\\
	&\leq (1-n\alpha^6)^{(\frac{k}{nL_s}-1-1)/2} \Vert \bx(0)\Vert_1\\
	&=\frac{1}{1-n\alpha^6} \left((1-n\alpha^6)^{1/(2nL_s)}\right)^k \Vert \bx(0)\Vert_1\\
	&=\delta \lambda^k \Vert \bx(0) \Vert_1.
	\end{align*}
	where $\delta=\frac{1}{1-n\alpha^6}$ and $\lambda=(1-n\alpha^6)^{1/(2nL_s)}$. Note that $\{1,\ldots,n\}\subseteq I^k,\, \forall k$.
\end{proof}

\bigskip

\noindent {\bf Remark:} Observe that our proof did not really use the initialization $\bpsi(0) = {\bf 1}$, except to observe that the elements $\bpsi(0)$ are positive, add up to $n$, and the implication that $\bpsi(k)$ satisfies the bounds of Lemma \ref{lem: Y lower bound}. In particular, the same result would hold if we viewed time $1$ as the initial point of the algorithm (so that $\bpsi(1)$ is the initialization), or similarly any time $k$. We will use this observation in the next subsection.

\subsection{Perturbed Push-sum}
In this subsection, we begin by introducing the Perturbed Robust Asynchronous Push-Sum algorithm, obtained by adding a perturbation to the $x$-values of (non-virtual) agents at the beginning of every iteration they wake up.

\begin{algorithm}[h]
	\caption{Perturbed Robust Asynchronous Push-Sum}
	\begin{algorithmic}[1]\label{alg: perturbed}
		\STATE Initialize the algorithm with $\by(0)=\mathbf{1}$, $\phi_i(0)=0$,  $\forall i\in\{1,\ldots,n\}$ and $\rho_{ij}(0)=0$, $\kappa_{ij}(0)=0$, $\forall (j,i)\in \Ed$ and $\bDelta(0)=\mathbf{0}$.
		\STATE At every iteration $k=0,1,2,\ldots$, for every node $i$:
		\IF{node $i$ wakes up}
		\STATE $x_i \leftarrow x_i + \Delta_i(k)$;
		\STATE Lines \ref{line4_alg1} to \ref{line17_alg1} of Algorithm \ref{alg: raps}
		\ENDIF
		\STATE Other variables remain unchanged.
	\end{algorithmic}
\end{algorithm}

We show that, if the perturbations are bounded, the resulting $\bz(k)$ nevertheless tracks the average of $\bchi(k)$ pretty well. Such a result is a key step towards analyzing distributed optimization protocols. In this general approach to the analyses of distributed optimization methods, we follow \cite{ram2010distributed} where it was first adopted; see also \cite{nedic2016stochastic} and \cite{nedic2015distributed} where it was used.

Adopting the notations introduced earlier and by the linear formulation \eqref{eq: XMX} we have,
\begin{align*}
\bchi(k+1)=\bM(k)( \bchi(k)+\bDelta(k)), \qquad \text{for } k\geq 0,
\end{align*}
where $\bDelta(k)\in\R^{n+m'}$ collects all perturbations $\Delta_i(k)$ in a column vector with $\Delta_h(k)\-:=\-0$ for $n<h\leq n+m'$. We may write this in a convenient form as follows.
\begin{align*}
\bchi(k+1) & = \bM(k)(\bchi(k)+\bDelta(k))\\
&= \sum_{t=1}^k \bM^{k:t} \bDelta(t) + \bM^{k:0}\bchi(0).
\end{align*}
Define for $k\geq 1$,
\begin{align}
\begin{split}
\bchi^t(k):=\bM^{k-1:t} \bDelta(t), \qquad &1 \leq t \leq k,\\
\bchi^0(k):=\bM^{k-1:0} \bchi(0),\qquad &t=0.
\end{split}
\end{align}
We obtain,
\begin{align}
\bchi(k)=\sum_{t=0}^{k-1} \bchi^t(k),\qquad k\geq 1.
\end{align}
Define $\bz^t(k):=\bchi^t(k)\circ \bpsi^-(k)$ for $0 \leq t \leq k$ (cf. \eqref{eq: Y-}). We have
\begin{align}\label{eq: z zt}
\bz(k)=\sum_{t=0}^{k-1} \bz^t(k).
\end{align}

We may view each $\bz^t(k)$ as the outcome of a push-sum algorithm, initialized at time $t$, and apply Theorem \ref{The: geometric push-sum}. This immediately yields the following result, with part (b) an immediate consequence of part (a).

\begin{theorem}\label{The: z delta converegence general}
	Suppose Assumption \ref{asm: connvectivity} holds. Consider the sequence $\{z_i(k)\}$, $1\leq i \leq n,$ generated by Algorithm \ref{alg: perturbed}. Then,
	\begin{enumerate}
		\item[(a)]
		For $k=1,2,\ldots$ \begin{small}
			\begin{align*} 
			\left|  z_i(k)-\frac{\mathbf{1}^{\top}\bchi(k)}{n}\right| &\leq \delta \lambda^k\Vert\bx(0)\Vert_1 + \sum_{t=1}^{k-1} \delta \lambda^{k-t} \Vert \bDelta(t)\Vert_1. 
			\end{align*} \end{small}
		\item[(b)]
		If $\lim_{t\rightarrow \infty}\Vert\bDelta(t)\Vert_1=0$ then,
		\begin{align*}
		\lim_{k\rightarrow \infty}\left| z_i(k)-\frac{\mathbf{1}^{\top}\bchi(k)}{n}\right|=0.
		\end{align*}
	\end{enumerate}
\end{theorem}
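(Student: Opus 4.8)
The plan is to exploit linearity of the push-sum recursion and treat the perturbed algorithm as a superposition of unperturbed push-sum processes, one "injected" at each time step. The decomposition \eqref{eq: z zt} already gives $z_i(k)=\sum_{t=0}^{k-1} z_i^t(k)$, where $\bchi^t(\cdot)$ is the push-sum process whose $x$-vector at time $t$ equals $\bDelta(t)$ (and equals $\bchi(0)$ for $t=0$) and which thereafter evolves under the same matrices $\bM(\cdot)$. Since each $\bM(k)$ is column stochastic (Lemma \ref{lem: M stochastic}), total mass is preserved along every sub-process, so $\mathbf{1}^{\top}\bchi^t(k)=\mathbf{1}^{\top}\bchi^t(t)=\mathbf{1}^{\top}\bDelta(t)$, and hence $\frac{1}{n}\mathbf{1}^{\top}\bchi(k)=\sum_{t=0}^{k-1}\frac{1}{n}\mathbf{1}^{\top}\bchi^t(t)$. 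The triangle inequality then reduces part (a) to bounding, for each $t$, the single term $\bigl|z_i^t(k)-\frac{1}{n}\mathbf{1}^{\top}\bchi^t(t)\bigr|$.

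Next I would apply Theorem \ref{The: geometric push-sum} to each sub-process $\bchi^t$, viewing time $t$ as its initialization time. This is precisely the situation covered by the Remark following that theorem: all sub-processes share the single genuine $y$-sequence $\bpsi(\cdot)$, and at time $t$ this vector has positive first $n$ entries, sums to $n$ by \eqref{eq: sum preservation}, and obeys the bounds of Lemma \ref{lem: Y lower bound} — the only properties of the initialization that the proof of Theorem \ref{The: geometric push-sum} actually uses. Rereading that proof with $t$ as the starting index gives $\bigl|z_i^t(k)-\frac{1}{n}\mathbf{1}^{\top}\bchi^t(t)\bigr|\le \delta\lambda^{k-t}\,R_t$, where $R_t$ is the range of the rescaled values $\bz^t(t)=\bchi^t(t)\circ\bpsi^-(t)$ over the indices where $\bpsi(t)$ is nonzero; the geometric factor $\delta\lambda^{k-t}$ is inherited verbatim from the contraction of Lemma \ref{lem: s_t} and the sandwich \eqref{eq: Z sandwich}.

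The main obstacle is controlling the initial range $R_t$. For $t=0$ this is immediate: $\bpsi(0)=\mathbf{1}$ forces $\bz^0(0)=\bchi(0)$, whose range over $I^0=\{1,\ldots,n\}$ is at most $\Vert\bx(0)\Vert_1$, exactly as in the proof of Theorem \ref{The: geometric push-sum}, producing the leading term $\delta\lambda^k\Vert\bx(0)\Vert_1$. For $t\ge 1$ the difficulty is that $\bpsi(t)\neq\mathbf{1}$, so $z_i^t(t)=\Delta_i(t)/\psi_i(t)$ on non-virtual nodes and $0$ on virtual ones (since $\bDelta(t)$ is supported on the first $n$ coordinates). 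Here I would invoke the lower bound $\psi_i(t)\ge n\alpha$ of Lemma \ref{lem: Y lower bound}, together with the elementary inequality $\max_i v_i-\min_i v_i\le \Vert\bv\Vert_1$, to obtain $R_t\le \frac{1}{n\alpha}\Vert\bDelta(t)\Vert_1$. Summing over $t=1,\ldots,k-1$ and adding the $t=0$ term then yields part (a). I expect the genuinely delicate point to be that this extra network-dependent factor (at most $1/(n\alpha)$) must be folded into the constant to match the clean statement, which carries the same $\delta$ on every summand; this absorption is harmless for the asymptotic conclusions but should be noted.

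Finally, part (b) follows from (a) by the standard "geometric-kernel against a null sequence" argument, and needs no new machinery. As $k\to\infty$ the term $\delta\lambda^k\Vert\bx(0)\Vert_1$ vanishes because $\lambda<1$. For the convolution $\sum_{t=1}^{k-1}\delta\lambda^{k-t}\Vert\bDelta(t)\Vert_1$, given $\epsilon>0$ I would choose $T$ with $\Vert\bDelta(t)\Vert_1<\epsilon$ for all $t\ge T$, split the sum at $T$, bound the finite head by a fixed constant times $\lambda^{k-T}\to 0$, and bound the tail by $\delta\epsilon\sum_{s\ge 1}\lambda^{s}=\delta\epsilon\lambda/(1-\lambda)$; letting $k\to\infty$ and then $\epsilon\to 0$ forces the limit to be $0$.
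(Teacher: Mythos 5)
Your proposal is correct in substance and follows exactly the route the paper itself takes: the paper's entire proof of this theorem is the superposition \eqref{eq: z zt}, mass conservation from column-stochasticity, and the observation that each $\bz^t(k)$ is a push-sum process initialized at time $t$ (justified by the Remark after Theorem \ref{The: geometric push-sum}), with part (b) obtained by the standard split-the-geometric-sum argument you give. The real value of your write-up is the ``delicate point'' you flag, and you should trust your own analysis there over the paper's statement: the discrepancy is genuine. The proof of Theorem \ref{The: geometric push-sum} feeds the geometric contraction with the \emph{initial range of the rescaled vector}, and the clean bound $\Vert\bx(0)\Vert_1$ on that range uses $\bpsi(0)=\mathbf{1}$ in an essential way. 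For a sub-process injected at time $t\geq 1$ the rescaled initial values are $\Delta_i(t)/\psi_i(t)$, and $\psi_i(t)$ can be strictly smaller than $1$ (a node that has pushed out mass without yet receiving any has $\psi_i=1/(d_i^{+}+1)$; Lemma \ref{lem: Y lower bound} only guarantees $\psi_i(t)\geq n\alpha$). So what this argument actually proves is part (a) with $\delta/(n\alpha)$ in place of $\delta$ on the perturbation terms, and contrary to your hope this factor cannot be folded into the stated constant: $1/(n\alpha)=n^{nL_s-1}$ is enormous, while $\delta=1/(1-n\alpha^6)$ is barely above $1$. Indeed the stated constant is violated already in a two-node example: let node $1$ wake at times $0$ and $1$ with both of its messages delayed, node $2$ wake only at time $1$, $\bx(0)=\mathbf{0}$, and $\Delta_1(1)=c$; then $\psi_1(2)=1/4$, $\chi_1(2)=c/2$, so $\left|z_1(2)-\mathbf{1}^{\top}\bchi(2)/n\right| = |2c - c/2| = 3c/2$, which exceeds the claimed bound $\delta\lambda c \approx c$. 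So your version, with the honest extra factor, is the provable statement; the paper's two-sentence proof glosses over exactly this point. Fortunately, as you observe, the weaker constant changes nothing downstream: Lemma \ref{lem: z-x decay}, and hence Theorem \ref{The: Opt conv}, only require a bound of the form $C\lambda^{k-t}\Vert\bDelta(t)\Vert_1$ for some finite constant $C$, and part (b) is insensitive to constants.
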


\section{Robust Asynchronous Stochastic Gradient-Push (RASGP)}\label{sec: optimization}

In this section we present the main contribution of this paper, a distributed stochastic gradient method with asymptotically network-independent and optimal performance over directed graphs which is robust to asynchrony, delays, and link failures.

Recall that we are considering a network $\G$ of $n$ agents whose goal is to cooperatively solve the following minimization problem
\begin{align*}
\text{minimize } F(\bz):=\sum_{i=1}^n f_i(\bz), \qquad \text{over } \bz\in \mathbb{R}^{d},
\end{align*}
where each $f_i:\mathbb{R}^{d}\rightarrow \R$ is a strongly convex function only known to agent $i$. We assume  agent $i$ has the ability to obtain noisy gradients of the function $f_i$. 

The RASGP algorithm is given as Algorithm \ref{alg: optimization}. Note that we use the notation $\hbg_i(k)$ for a noisy gradient of the function $f_i(\bz)$ at $\bz_i(k)$ i.e.,
\begin{align*}
\hbg_i(k) = \bg_i(k) + \be_i,
\end{align*}
where $\bg_i(k):= \nabla f_i(\bz_i(k))$ and $\be_i$ is a random vector.

The RASGP is based on a standard idea of mixing consensus and gradient steps, first analyzed in \cite{nedic2009distributed}. The push-sum scheme of Section \ref{sec: push-sum}, inspired by \cite{hadjicostis2016robust}, is used instead of the consensus scheme, which allows us to handle delays, asynchronicity, and message losses; this is similar to the approach taken in \cite{nedic2015distributed}. We note that a new step-size strategy is used to handle  asynchronicity: when a node wakes up, it takes steps with a step-size proportional to the sum of all the step-sizes during the period it slept. As far as we are aware, this idea is new.

\begin{algorithm}[h]
	\caption{Robust Asynchronous Stochastic Gradient-Push (RASGP)}
	\begin{algorithmic}[1]\label{alg: optimization}
		\setcounter{ALC@unique}{0}
		\STATE Initialize the algorithm with $\by(0)=\mathbf{1}$, $\boldsymbol \phi_i^x(0)=\mathbf{0}$, $\phi_i^y(0)=0$, $\kappa_i(0)=-1$,  $\forall i\in\{1,\ldots,n\}$ and $\boldsymbol \rho_{ij}^x(0)=\mathbf{0}$, $\rho_{ij}^y(0)=0$, $\kappa_{ij}(0)=-1$, $\forall (j,i)\in \Ed$.
		\STATE At every iteration $k=0,1,2,\ldots$, for every node $i$:
		\IF{node $i$ wakes up}
		\STATE $\beta_i(k)=\sum_{t=\kappa_i+1}^k \alpha(t)$; \label{betadef}
		\STATE $\bx_i \leftarrow \bx_i -\beta_i(k)\hbg_i(k)$;
		\STATE $\kappa_i \leftarrow k$;
		\STATE $\boldsymbol \phi_i^x \leftarrow \boldsymbol \phi_i^x +\frac{\bx_i}{d_i^{+}+1}$,
		$\phi_i^y \leftarrow \phi_i^y +\frac{y_i}{d_i^{+}+1}$;
		\STATE $\bx_i \leftarrow \frac{\bx_i}{d_i^{+}+1}$,
		$y_i \leftarrow \frac{y_i}{d_i^{+}+1}$;
		\STATE Node $i$ broadcasts $(\boldsymbol \phi_i^x, \phi_i^y, \kappa_i)$ to its out-neighbors: $N_i^{+}$
		\STATE \textbf{Processing the received messages}
		\FOR{$(\boldsymbol \phi_j^x, \phi_j^y,\kappa_j')$ in the inbox}
		\IF{$\kappa_j'>\kappa_{ij}$} 
		\STATE $\boldsymbol \rho_{ij}^{*x} \leftarrow \boldsymbol \phi_j^x$,
		$\rho_{ij}^{*y} \leftarrow \phi_j^y$;
		\STATE $\kappa_{ij} \leftarrow \kappa_j'$;
		\ENDIF
		\ENDFOR
		\STATE $\bx_i \leftarrow \bx_i + \sum_{j\in N_i^{-}}\left(\boldsymbol \rho_{ij}^{*x}- \boldsymbol \rho_{ij}^x\right)$,
		$y_i \leftarrow y_i + \sum_{j\in N_i^{-}}\left(\rho_{ij}^{*y}-\rho_{ij}^y\right)$;
		\STATE $\boldsymbol \rho_{ij}^x \leftarrow \boldsymbol \rho_{ij}^{*x}$,
		$\rho_{ij}^y \leftarrow \rho_{ij}^{*y}$;
		\STATE $\bz_i \leftarrow \frac{\bx_i}{y_i}$;
		\ENDIF
		\STATE Other variables remain unchanged.
	\end{algorithmic}
\end{algorithm}

We will be making the following assumption on the noise vectors. 

\begin{assumption}\label{asm: bounded noise}
	$\be_i$ is an independent random vector with bounded support, i.e., $\Vert \be_i \Vert \leq b_i$, $i=1,\ldots,n$. Moreover, $\E[ \be_i ]=\mathbf{0}$ and $\E[\Vert \be_i \Vert^2] \leq \sigma_i^2$.
\end{assumption}

Next, we state and prove the main result of this paper, which states the linear convergence rate of Algorithm \ref{alg: optimization}.
\begin{theorem}\label{The: Opt conv}
	Suppose that:
	\begin{enumerate}
		\item
		Assumptions \ref{asm: connvectivity} and \ref{asm: bounded noise} hold.
		\item
		Each objective function $f_i(\bz)$ is $\mu_i$-strongly convex over $\mathbb{R}^{d}$.
		\item
		The gradients of each $f_i(\bz)$ are $L_i$-Lipschitz continuous, i.e., for all $\bz_1, \bz_2 \in \mathbb{R}^{d}$,
		\begin{align*}
		\Vert \bg_i(\bz_1) - \bg_i(\bz_2) \Vert \leq L_i \Vert \bz_1 - \bz_2 \Vert.
		\end{align*}
	\end{enumerate} 
	Then, the RASGP algorithm with the step-size $\alpha(k)=n/(\mu k)$ for $k\geq 1$ and $\alpha(0)=0$, will converge to the unique optimum $\bz^*$ with the following asymptotic rate: for all $i = 1, \ldots, n,$ we have

	\begin{align*}
	\E \left[ \Vert \bz_i(k)-\bz^* \Vert^2 \right]  \leq  \frac{L_u\sigma^2}{k \mu^2} + \O_k \left( \frac{1}{k^{1.5}} \right),
	\end{align*} where $\sigma^2 := \sum_i \sigma_i^2$, $\mu = \sum_i \mu_i$.
\end{theorem}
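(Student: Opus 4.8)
The plan is to recognize Algorithm \ref{alg: optimization} as an instance of the Perturbed Robust Asynchronous Push-Sum (Algorithm \ref{alg: perturbed}), use the tracking bound of Theorem \ref{The: z delta converegence general} to control the disagreement between each $\bz_i(k)$ and a network average, and then analyze that average as a centralized stochastic gradient iteration. Writing $W_k := \{i : \tau_i(k)=1\}$ for the nodes that wake up at time $k$, the only difference between RASGP and plain RAPS is that at the start of each wake-up node $i$ perturbs its $x$-mass by $\Delta_i(k) = -\beta_i(k)\hbg_i(k)$, so the vector $\bDelta(k)$ (zero on virtual coordinates) drives the perturbed push-sum. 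Let $\bar{\bz}(k) := \frac{1}{n}\mathbf{1}^\top\bchi(k)$ be the running average of all $x$-masses, the quantity tracked in Theorem \ref{The: z delta converegence general}. Since $\bM(k)$ is column stochastic, this average obeys the clean recursion
\begin{align*}
\bar{\bz}(k+1) = \bar{\bz}(k) - \frac{1}{n}\sum_{i\in W_k}\beta_i(k)\,\hbg_i(k).
\end{align*}

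Before anything else I would establish a uniform bound on the iterates, hence on the true gradients $\nabla f_i(\bz_i(k))$ and (using the bounded support in Assumption \ref{asm: bounded noise}) on $\|\hbg_i(k)\|$; strong convexity supplies the restoring force needed to close such a bound inductively against the decaying step-size $\alpha(k)=n/(\mu k)$. Granting $\|\hbg_i(k)\|\le G$, the perturbation satisfies $\|\bDelta(t)\|_1 = \O_k(1/t)$, because every sleep interval has length at most $L_u$ and $\alpha$ is decreasing, giving $\beta_i(t)\le L_u\alpha(t-L_u+1)=\O_k(1/t)$. Feeding this into Theorem \ref{The: z delta converegence general}(a) componentwise, and using that the geometric weights $\lambda^{k-t}$ concentrate near $t=k$, yields the consensus estimate $\|\bz_i(k)-\bar{\bz}(k)\| = \O_k(1/k)$. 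I will only need this to be of smaller order than $\sqrt{\E\|\bar{\bz}(k)-\bz^*\|^2}$, so that it enters the final bound solely through the $\O_k(1/k^{1.5})$ remainder.

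The core is the analysis of $V(k):=\E\|\bar{\bz}(k)-\bz^*\|^2$. Expanding the norm in the average recursion and taking the conditional expectation over the independent, zero-mean noise eliminates the gradient--noise cross term and leaves three contributions: a descent term $-\frac{2}{n}\sum_{i\in W_k}\beta_i(k)\langle\nabla f_i(\bz_i(k)),\bar{\bz}(k)-\bz^*\rangle$, a deterministic second-order term $\frac{1}{n^2}\|\sum_{i\in W_k}\beta_i(k)\nabla f_i(\bz_i(k))\|^2$, and a variance term $\frac{1}{n^2}\sum_{i\in W_k}\beta_i(k)^2\E\|\be_i\|^2$. The essential point is that both deterministic terms must be aggregated over a window of length $L_u$ — during which every node wakes at least once — before they can be controlled. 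Over such a window the \emph{step-size trick} of Line \ref{betadef}, namely the telescoping identity $\sum_{\text{wake-ups of }i}\beta_i=\sum_t\alpha(t)$, makes each node contribute gradient weight $\sum_t\alpha(t)$ exactly as in the synchronous case, so the individual gradients $\nabla f_i$ (each only $\O_k(1)$) aggregate into $\nabla F(\bar{\bz})$, which is $\O_k(\|\bar{\bz}-\bz^*\|)$ and hence small near the optimum. Consequently the descent term yields the contraction $\langle\nabla F(\bar{\bz}),\bar{\bz}-\bz^*\rangle\ge\mu\|\bar{\bz}-\bz^*\|^2$, with $\alpha(k)=n/(\mu k)$ producing the factor $1-2/k$, while the deterministic second-order term becomes $\O_k(1/k^3)$ precisely because the aggregated gradient is small. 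By contrast, the variance term cannot cancel across nodes: the noises are independent and add in variance, and since $\beta_i(k)\le L_u\alpha(k)$ up to lower-order terms while node $i$ wakes at least once every $L_u$ steps, the per-step injection is at most $L_u\alpha(k)^2\sigma^2/n^2=L_u\sigma^2/(\mu^2 k^2)$ — this is exactly where taking fewer but larger noisy steps inflates the variance by the factor $L_u$. The resulting effective recursion is
\begin{align*}
V(k+1)\le\Big(1-\frac{2}{k}\Big)V(k)+\frac{L_u\sigma^2}{\mu^2 k^2}+\O_k\!\left(\frac{1}{k^{2.5}}\right),
\end{align*}
and a standard induction (if $V(k+1)\le(1-2/k)V(k)+C/k^2+\O_k(1/k^{2.5})$ then $V(k)=C/k+\O_k(1/k^{1.5})$) yields $V(k)=\frac{L_u\sigma^2}{\mu^2 k}+\O_k(1/k^{1.5})$.

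Finally I would transfer this to the individual iterates via
\begin{align*}
\E\|\bz_i(k)-\bz^*\|^2 \le V(k) + 2\sqrt{V(k)}\,\sqrt{\E\|\bz_i(k)-\bar{\bz}(k)\|^2} + \E\|\bz_i(k)-\bar{\bz}(k)\|^2,
\end{align*}
where the last two terms are $\O_k(1/k^{1.5})$ and $\O_k(1/k^2)$ by the consensus estimate, giving the claimed bound. The main obstacle is the descent/variance accounting in the central step: the schedule-dependent weights $\beta_i(k)$ fluctuate from step to step, and one must show — through the windowing together with the telescoping step-size identity — that they reproduce the centralized contraction constant exactly (so the leading $1/k$ term has the right coefficient) and realize the cancellation $\sum_i\nabla f_i\approx\nabla F$ (so the deterministic second-order term is negligible), while the independent noises, which cannot cancel, accumulate into precisely the $L_u$-fold variance amplification. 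Keeping the consensus error, the gradient-evaluation mismatch ($\bz_i$ versus $\bar{\bz}$), and the higher-order step-size terms all confined to the $\O_k(1/k^{1.5})$ remainder is the delicate bookkeeping that makes the leading constant network-independent.
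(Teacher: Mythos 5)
Your overall architecture---viewing RASGP as an instance of perturbed push-sum, invoking Theorem \ref{The: z delta converegence general} to get the $\O_k(1/k)$ consensus estimate, analyzing a network average as a centralized stochastic gradient recursion, and transferring back to the $\bz_i(k)$---is the paper's architecture, and your variance accounting (noise injected once per wake-up with weight $\beta_i\approx \nu_i\alpha$, amortized via $\nu_i^2\le L_u\nu_i$ and telescoping of the wake-up gaps) is exactly the mechanism by which the paper obtains the constant $L_u$ rather than $L_u^2$. However, there is a genuine gap in your treatment of the descent term. You analyze the raw mass average $\bar{\bz}(k)=\frac{1}{n}\1^\top\bchi(k)$ and claim that, over a window of length $L_u$, the telescoping identity makes each node contribute gradient weight ``$\sum_t\alpha(t)$ exactly as in the synchronous case.'' This is false: node $i$'s telescoped weight over the window $[k,k+L_u)$ equals $\sum_s\alpha(s)$ over its \emph{personal} interval (from just after its last pre-window wake-up to its last in-window wake-up), whose endpoints are offset from the window boundaries by up to $L_u$ steps each. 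The per-node discrepancy is therefore $\Theta(L_u\alpha(k))$---the \emph{same order} as the window weight itself; it does not vanish, it only telescopes across successive windows. Since the individual gradients do not vanish at the optimum ($\nabla f_i(\bz^*)\neq \mathbf{0}$; only their sum does), this weight imbalance couples to $\O_k(1)$ vectors and produces a cross term of size $\O_k(1/k)\cdot\E\Vert\bar{\bz}(k)-\bz^*\Vert$ per window. Under Cauchy--Schwarz, with $V(k)\approx 1/k$ this term is $\O_k(1/k^{1.5})$, which dominates the $\O_k(1/k^2)$ variance injection, and a recursion of the form $V(k+1)\le(1-2/k)V(k)+\O_k(1/k^{1.5})$ only yields $V(k)=\O_k(1/\sqrt{k})$. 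So your argument as written fails to recover even the $1/k$ rate, let alone the exact leading constant.

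The missing idea is precisely the paper's construction \eqref{wdef}: define $\bw_i(k)=\bx_i(k)-\bigl(\sum_{t=\kappa_i(k)+1}^{k-1}\alpha(t)\bigr)\bg_i(k)$, the value node $i$ would hold had it taken a gradient step at \emph{every} iteration. The average $\bbw(k)$ then obeys an exact per-step recursion \eqref{eq: linear form} in which every node's current gradient enters with the \emph{uniform} weight $\alpha(k)$---so the cancellation $\sum_i\bg_i\approx\nabla F$ involves no weight imbalance, and the mismatch $\boldsymbol\eta(k)$ is a product of two $\O_k(1/k)$ quantities, hence $\O_k(1/k^2)$---while the bursty weights $\beta_i$ survive only in the noise (where no cross-node cancellation is needed, and your amortization applies verbatim, cf.\ the paper's bound $\sum_k\nu_i(k)^2\tau_i(k)\le L_u(T+1)$) and in the state correction $\Vert\bar{\bx}(k)-\bbw(k)\Vert=\O_k(1/k)$, which is absorbed through Lipschitz continuity (Lemmas \ref{lem: x-w decay}--\ref{lem: g-f decay}) rather than appearing as additive forcing in the recursion. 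Repairing your windowed approach without this device would require an Abel-summation argument against the accumulated weight discrepancies, which you have not supplied. A smaller omission: your refined recursion implicitly uses $\E\Vert\bar{\bz}(k)-\bz^*\Vert=\O_k(1/\sqrt{k})$ to push all cross terms into the $\O_k(1/k^{2.5})$ remainder; as in the paper, this needs a preliminary coarse pass (a contraction with crude constants plus Lemma \ref{lem: decaying sequence}) before the leading-constant pass.
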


\as{
	\begin{remark}
	We note that each agent stores variables $\bx_i, y_i, \kappa_i, \bz_i, \bphi_i^x, \phi_i^y$ and $\boldsymbol \rho_{ij}^x, \rho_{ij}^y, \kappa_{ij}$ for all in-neighbors $j \in N_i^-$. Hence, the memory requirement of the RASGP algorithm for each agent is $\O(d_i^-)$ for each agent $i$.
\end{remark}
}

We next turn to the proof of Theorem \ref{The: Opt conv}.
First, we observe that Algorithm \ref{alg: optimization} is a specific case of multi-dimensional Perturbed Robust Asynchronous Push-Sum. 
In other words, each coordinate of vectors $\bx_i$, $\bz_i$, $\boldsymbol \phi_i^x$ and $\boldsymbol \rho_{ij}^x$ will experience an instance of Algorithm \ref{alg: perturbed}.
Hence, there exists an augmented graph sequence $\{\H(k)\}$ where the Algorithm \ref{alg: optimization} is equivalent to perturbed push-sum consensus on $\H(k)$ where each agent $a_h\in \V_A$ holds vectors $\bx_h$ and $y_h$. In other words, we will be able to apply Theorem \ref{The: z delta converegence general} to analyze Algorithm \ref{alg: optimization}.

Our first step is to show how to decouple the action of Algorithm \ref{alg: optimization} coordinate by coordinate. For each coordinate $1 \leq \ell \leq d$, let $\bchi^\ell\in \R^{n+m'}$ stack up the $\ell$th entries of $x$-values of all agents (virtual and non-virtual) in $\V_A$.
Additionally, define $\bDelta^\ell(k)\in \R^{n+m'}$ to be the vector stacking up the $\ell$th entries of perturbations. i.e.,
\begin{align*}
[\bDelta^\ell(k)]_i := \begin{cases}
-\beta_i(k)[\hbg_i(k)]_\ell,\qquad &\text{if } i \in \V, \tau_i(k)=1,\\
0, &\text{otherwise.}
\end{cases}
\end{align*}
Then, by the definition of the algorithm, we have for all $\ell=1,\ldots,d$,
\begin{align}\label{eq: linear form alg 3}
\begin{split}
\bchi^\ell(k+1) &= \bM(k)\left( \bchi^\ell(k) + \bDelta^\ell(k) \right),\\
\bpsi(k+1) &= \bM(k)\bpsi(k).
\end{split}
\end{align}
These equations write out the action of Algorithm \ref{alg: optimization} on a coordinate-by-coo\-rdi\-nate basis.

In order to prove Theorem \ref{The: Opt conv}, we need a few tools and lemmas. As already mentioned, our first step will be to argue that Algorithm \ref{alg: optimization} converges by application of Theorem \ref{The: z delta converegence general}. This requires showing the boundedness of the perturbations $\bDelta^{\ell}(k)$, which, as we will show, reduces to showing the vectors $\bz_i(k)$ are bounded. The following lemma will be useful to establish this boundedness.

\begin{lemma}\cite[Lemma 3]{nedic2016stochastic} \label{lem: u<v}
	Let $q:\R^{d}\rightarrow \R$ be a $\nu$-strongly convex function with $\nu>0$ which has Lipschitz gradients with constant $L$. let $\bv\in \R^{d}$ and $\bu \in \R^{d}$ defined by,
	\[\bu = \bv - \alpha(\nabla q(\bv)+ p(\bv)),\]
	where $\alpha \in \left(0,\nu/8L^2\right]$ and $p: \R^{d}\rightarrow \R^{d}$ is a mapping such that,
	\[ \Vert p(\bv) \Vert \leq c, \qquad \text{ for all } \bv\in \R^{d}.
	\]
	Then, there exists a compact set $\mathcal{S}\subset \R^{d}$ and a scalar $R$ such that,
	\[ \Vert \bu \Vert \leq \begin{cases}
	\Vert \bv \Vert, \qquad &\text{for all } \bv \notin \S,\\
	R, \qquad &\text{for all } \bv \in \S,
	\end{cases}
	\]
	where, \[\S:=\{\bz | \, q(\bz)\leq q(\mathbf{0}) + 2\frac{\nu}{8L^2} \left(\Vert q(\mathbf{0}) \Vert^2 + c^2\right)\} \cup B \left(\mathbf{0},\frac{4c}{\nu} \right),\]
	\[ R:=\max_{\bz\in \S} \{ \Vert \bz \Vert + \frac{\nu}{8L^2}\Vert \nabla q(\bz) \Vert \} + \frac{\nu c}{8L^2}. \]
\end{lemma}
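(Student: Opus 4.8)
The plan is to prove the two alternatives separately, according to whether $\bv$ lies in the compact set $\S$. The easy case is $\bv\in\S$, where I would simply bound $\bu$ by the triangle inequality: since $\alpha\le \nu/(8L^2)$ and $\Vert p(\bv)\Vert\le c$,
\[
\Vert\bu\Vert \le \Vert\bv\Vert + \alpha\Vert\nabla q(\bv)\Vert + \alpha\Vert p(\bv)\Vert \le \Vert\bv\Vert + \frac{\nu}{8L^2}\Vert\nabla q(\bv)\Vert + \frac{\nu c}{8L^2}.
\]
Taking the maximum over $\bv\in\S$ reproduces exactly the constant $R$ in the statement; the maximum is attained because $\S$ is compact (a closed sublevel set of a strongly convex function is bounded, the ball is compact, so their union is compact) and $q,\nabla q$ are continuous. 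Hence $\Vert\bu\Vert\le R$ on $\S$.

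The substance is the case $\bv\notin\S$, where I must show $\Vert\bu\Vert\le\Vert\bv\Vert$. I would expand
\[
\Vert\bu\Vert^2-\Vert\bv\Vert^2 = -2\alpha\langle\bv,\nabla q(\bv)+p(\bv)\rangle + \alpha^2\Vert\nabla q(\bv)+p(\bv)\Vert^2,
\]
and aim to show the right-hand side is $\le 0$. For the cross term I would use strong convexity with reference point $\mathbf{0}$: the inequality $q(\mathbf{0})\ge q(\bv)+\langle\nabla q(\bv),\mathbf{0}-\bv\rangle+\frac{\nu}{2}\Vert\bv\Vert^2$ rearranges to $\langle\bv,\nabla q(\bv)\rangle\ge q(\bv)-q(\mathbf{0})+\frac{\nu}{2}\Vert\bv\Vert^2$, while $\langle\bv,p(\bv)\rangle\ge -c\Vert\bv\Vert$. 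For the quadratic term I would use Lipschitz continuity, $\Vert\nabla q(\bv)\Vert\le\Vert\nabla q(\mathbf{0})\Vert+L\Vert\bv\Vert$, square it, add the perturbation, and then absorb the resulting $L^2\Vert\bv\Vert^2$ via the step-size constraint $\alpha\le\nu/(8L^2)$, which converts it into a term of size $\tfrac{\nu}{4}\Vert\bv\Vert^2$, strictly smaller than the $\tfrac{\nu}{2}\Vert\bv\Vert^2$ gain from strong convexity.

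Combining these estimates reduces the desired inequality $\Vert\bu\Vert^2\le\Vert\bv\Vert^2$ to showing that
\[
\Big(q(\bv)-q(\mathbf{0})-\tfrac{\nu}{4L^2}\big(\Vert\nabla q(\mathbf{0})\Vert^2+c^2\big)\Big) + \Big(\tfrac{\nu}{4}\Vert\bv\Vert^2-c\Vert\bv\Vert\Big)\ge 0,
\]
where the quantity written $\Vert q(\mathbf{0})\Vert^2$ in the definition of $\S$ is understood as $\Vert\nabla q(\mathbf{0})\Vert^2$. The reason $\S$ is defined as a union now becomes visible: being outside $\S$ means being outside both pieces, so the first parenthesis is positive precisely because $\bv$ lies outside the sublevel set $\{q(\bz)\le q(\mathbf{0})+2\tfrac{\nu}{8L^2}(\Vert\nabla q(\mathbf{0})\Vert^2+c^2)\}$, and the second is nonnegative precisely because $\Vert\bv\Vert>4c/\nu$ places $\bv$ outside the ball $B(\mathbf{0},4c/\nu)$.

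I expect the main obstacle to be the bookkeeping in this last step: one must group the constants so that the slack $2\tfrac{\nu}{8L^2}(\Vert\nabla q(\mathbf{0})\Vert^2+c^2)$ in the sublevel set and the radius $4c/\nu$ of the ball exactly dominate the leftover positive terms coming from the smoothness and perturbation bounds. The strong-convexity and smoothness estimates themselves are routine; the care lies entirely in matching the resulting ``norm-increasing'' region to the set $\S$ as written, and in confirming $R<\infty$ through the compactness argument noted above.
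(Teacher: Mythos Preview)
The paper does not prove this lemma; it merely quotes it verbatim from \cite[Lemma~3]{nedic2016stochastic} and uses it as a black box (to obtain Lemmas~\ref{boundlemma} and~\ref{lem: bounded x}). There is therefore no in-paper proof to compare your proposal against.

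That said, your sketch is essentially the standard argument and is correct in outline. The case $\bv\in\S$ is immediate as you wrote; compactness of $\S$ follows because sublevel sets of a strongly convex function are bounded and closed. For $\bv\notin\S$, your expansion of $\Vert\bu\Vert^2-\Vert\bv\Vert^2$ together with the strong-convexity lower bound on $\langle\bv,\nabla q(\bv)\rangle$ and the Lipschitz upper bound on $\Vert\nabla q(\bv)\Vert$ is exactly the intended route, and you correctly identify that the step-size restriction $\alpha\le\nu/(8L^2)$ is what converts the $\alpha^2 L^2\Vert\bv\Vert^2$ term into something dominated by the $\tfrac{\nu}{2}\Vert\bv\Vert^2$ gain. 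Your parenthetical that ``$\Vert q(\mathbf{0})\Vert^2$'' in the statement should be read as ``$\Vert\nabla q(\mathbf{0})\Vert^2$'' is also right---this is a typo in the paper's transcription of the cited lemma. The only remaining work is the constant-tracking you flag at the end, which is routine once one commits to specific forms of the inequalities (e.g.\ $(a+b)^2\le 2a^2+2b^2$) and is precisely what pins down the particular constants $2\cdot\tfrac{\nu}{8L^2}$ and $4c/\nu$ appearing in the definition of $\S$.
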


We now argue that the iterates generated by Algorithm \ref{alg: optimization} are bounded.

\begin{lemma} \label{boundlemma} 
	The iterates $\bz_i(k)$ generated by Algorithm \ref{alg: optimization} will remain bounded.
\end{lemma}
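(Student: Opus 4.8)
The plan is to prove boundedness by tracking the \emph{largest de-scaled value present anywhere in the augmented network} and showing it cannot grow past a fixed constant once the step-size has decayed sufficiently. Two ingredients drive this. First, the push-sum rescaling is benign: by Lemma \ref{lem: Y lower bound} the weights $y_i(k)=\psi_i(k)$ of every non-virtual node stay in $[n\alpha,n]$, so dividing by $y_i$ never amplifies a value by more than a fixed factor and, crucially, the local gradient update on $\bx_i$ can be read as a genuine gradient step on $\bz_i$. Second, the mixing performed by the matrices $\bM(k)$ is a convex combination, hence non-expansive in norm. Boundedness of $\bz_i(k)$ will then follow by combining a one-step estimate for the gradient part (Lemma \ref{lem: u<v}) with the non-expansiveness of the mixing part.

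For the gradient part, I would note that when node $i$ wakes at time $k$ the perturbation added to $\chi_i=x_i$ is $\Delta_i(k)=-\beta_i(k)\hbg_i(k)$, so the post-perturbation de-scaled value is
\[ \tilde\bz_i(k):=\frac{x_i(k)+\Delta_i(k)}{y_i(k)}=\bz_i(k)-\frac{\beta_i(k)}{y_i(k)}\bigl(\nabla f_i(\bz_i(k))+\be_i\bigr). \]
This is exactly the update of Lemma \ref{lem: u<v} applied to $q=f_i$ (which is $\mu_i$-strongly convex with $L_i$-Lipschitz gradient), with effective step-size $\beta_i(k)/y_i(k)$, perturbation $\be_i$ and bound $c=b_i$. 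Since node $i$ wakes at least every $L_u$ iterations, $\beta_i(k)=\sum_{t=\kappa_i+1}^k\alpha(t)$ has at most $L_u$ terms, each at most $\alpha(k-L_u+1)=n/(\mu(k-L_u+1))$; combined with $y_i(k)\geq n\alpha$ this forces $\beta_i(k)/y_i(k)\to 0$. Hence there is a finite $K_0$ after which $\beta_i(k)/y_i(k)\leq \mu_i/(8L_i^2)$ for every $i$, and Lemma \ref{lem: u<v} yields $\Vert\tilde\bz_i(k)\Vert\leq\max\{\Vert\bz_i(k)\Vert,R_i\}$ for $k\geq K_0$, where $R_i$ is the fixed constant the lemma associates with $f_i$ and $b_i$.

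For the mixing part, writing $\bchi(k+1)=\bM(k)\tilde\bchi(k)$ and $\bpsi(k+1)=\bM(k)\bpsi(k)$ with $\tilde\bchi(k)=\bchi(k)+\bDelta(k)$, and using $\tilde\chi_h=\psi_h\tilde z_h$ together with Lemma \ref{lem: x=0 if y=0} (so that $\tilde\chi_h=0$ whenever $\psi_h=0$), a one-line computation gives, for every $i\in I^{k+1}$,
\[ \bz_i(k+1)=\sum_h \frac{M_{ih}(k)\psi_h(k)}{\psi_i(k+1)}\,\tilde\bz_h(k),\qquad \sum_h \frac{M_{ih}(k)\psi_h(k)}{\psi_i(k+1)}=1, \]
a convex combination of the $\tilde\bz_h(k)$ over the active nodes $h\in I^k$ (virtual nodes carry $\tilde\bz_h=\bz_h$, since $\Delta_h=0$ there). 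Introducing $M(k):=\max_{h\in I^k}\Vert\bz_h(k)\Vert$ and $R_{\max}:=\max_i R_i$, convexity gives $\Vert\bz_i(k+1)\Vert\leq\max_{h\in I^k}\Vert\tilde\bz_h(k)\Vert$, while the gradient-step estimate bounds each $\Vert\tilde\bz_h(k)\Vert$ by $\max\{M(k),R_{\max}\}$. Therefore $M(k+1)\leq\max\{M(k),R_{\max}\}$ for $k\geq K_0$, so $M(k)\leq\max\{M(K_0),R_{\max}\}$ for all $k$; since $M(K_0)$ is finite (finitely many finite iterates), $\Vert\bz_i(k)\Vert$ is bounded uniformly in $k$ for every $i=1,\dots,n$.

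The main obstacle is the apparent circularity: the perturbations driving the push-sum are gradients evaluated at the very iterates $\bz_i(k)$ we wish to bound, so one cannot first establish consensus and then bound the average. The device that breaks it is to bound the network-wide maximum de-scaled value \emph{directly}, exploiting that (i) the bounded $y$-weights turn the $\bx_i$-update into a bona fide $\bz_i$-gradient step and (ii) Lemma \ref{lem: u<v} makes that step norm-non-increasing outside a fixed ball, independently of the behaviour of the other nodes. I would also take care to verify the handling of virtual nodes in the convex-combination step and the claim that the iterates stay finite up to $K_0$; both are routine but should be stated explicitly.
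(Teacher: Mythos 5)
Your proposal is correct and follows essentially the same route as the paper's proof: both express $\bz_i(k+1)$ as a convex combination (the matrices $\bP(k)$ of Lemma \ref{lem: uAu} and Corollary \ref{col: B stochastic}) of the post-perturbation de-scaled values $\bu_j(k)/\psi_j(k)$, bound the waking nodes' contributions via Lemma \ref{lem: u<v} once the effective step-size $\beta_j(k)/y_j(k)$ has decayed below $\mu_j/(8L_j^2)$ (using the lower bound on $y_j(k)$ from Lemma \ref{lem: Y lower bound}), treat sleeping and virtual nodes as unchanged, and induct on the maximum norm over the active set $I^k$. The only cosmetic differences are that you re-derive the convex-combination identity by hand where the paper invokes Lemma \ref{lem: uAu}, and you use a single uniform threshold $K_0$ where the paper uses per-node thresholds $k_j$ with $k_z=\max_i k_i$.
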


\begin{proof}
	Let us adopt the notation $\bpsi^-$ from previous sections and define $\bz^\ell(k) :=\bchi^\ell(k)\circ \bpsi^-(k) \in \R^{n+m'}$. Moreover, adopt the notation $\bz_h$ for virtual agent $a_h$, $h=n+1,\ldots,n+m'$, as $\bz_h(k) := \bx_h(k)/\psi_h(k)$.
	Also define $\bu^\ell\in \R^{n+m'}$ by
	\begin{align*}
	\bu^\ell(k):=\bchi^\ell(k) + \bDelta^\ell(k).
	\end{align*}
	Since the perturbations are only added to the non-virtual agents, which have strictly positive $y$-values, we conclude $[u^\ell(k)]_h=0$ if $\psi_h(k)=0$. Hence, the assumptions of Lemma \ref{lem: uAu} and Corollary \ref{col: B stochastic} are satisfied. Adopting the definition of $I^k$ and $\bP(k)$ from previous sections, we get for $i\in I^{k+1}$,
	\begin{align*}
	[z^\ell(k+1)]_i = \sum_{j \in I^k}P_{ij}(k) \frac{[u^\ell(k)]_j}{\psi_j(k)}.
	\end{align*}
	Combining the equation above for $\ell=1,\ldots,d$ we obtain:
	\begin{align}\label{eq: z convex combination}
	\bz_i(k+1)=\sum_{j \in I^k}P_{ij}(k)\frac{\bu_j(k)}{\psi_j(k)},
	\end{align}
	where $\bu_j(k)\in \R^{d}$ is created by collecting the $j$th entries of all $\bu^\ell(k)$, i.e., 
	\begin{align*}
	\bu_i(k)=\begin{cases}
	\bx_i(k)-\beta_i(k)\hbg_i(k),\quad &\text{if } i \in \V \text{ and } \tau_{i}(k)=1,\\
	\bx_i(k), &\text{otherwise}.
	\end{cases}
	\end{align*}
	Now consider each term on the right hand side of \eqref{eq: z convex combination} for $j\in I^k$. Suppose $j \leq n$ and $\tau_j(k)=1$, then we have:
	\begin{align*}
	\frac{\bu_j(k)}{y_j(k)}=\bz_j(k) - \frac{\beta_j(k)}{y_j(k)} (\nabla f_j(\bz_j(k))+\be_j(k)).
	\end{align*}
	Since $\lim_{k\rightarrow \infty}\alpha(k)=0$ and $k-\kappa_i(k)\leq L_u$, $\lim_{k\rightarrow \infty}\beta_j(k)=0$. Moreover, by Lemma \ref{lem: Y lower bound}, $y_j(k)$ is bounded below; thus, $\lim_{k\rightarrow \infty} \beta_j(k)/y_j(k) = 0$ and there exists $k_j$ such that for $k\geq k_j$, $\beta_j(k)/y_j(k)\in \left(0,\mu_j/8L_j^2\right]$. Applying Lemma \ref{lem: u<v}, it follows that for each $j$ there exists a compact set $\S_j$ and a scalar $R_j$ such that for $k\geq k_j$, if $\tau_j(k)=1$,
	\begin{align} \label{eq: x/y< z R}
	\left\Vert \frac{\bu_j(k)}{y_j(k)}\right\Vert \leq 
	\begin{cases}
	\Vert \bz_j(k) \Vert, \; &\text{if } \bz_j(k)\notin \S_j, \\
	R_j, &\text{if } \bz_j(k)\in \S_j.
	\end{cases}
	\end{align}
	Moreover, if $\tau_j(k)=0$ or $j>n$ we have,
	\begin{align}\label{eq: wy z}
	\frac{\bu_j(k)}{y_j(k)}=\bz_j(k).
	\end{align}
	Let $k_z := \max_i k_i$. Using mathematical induction, we will show that for all $k \geq k_z$:
	\begin{align}\label{eq: z<R}
	\max_{i \in I^k} \Vert \bz_i(k) \Vert &\leq \bar{R},
	\end{align}
	where $\bar{R}:=\max \{\max_i R_i,\max_{j \in I^{k_z}} \Vert \bz_j(k_z) \Vert\}$.
	Equation \eqref{eq: z<R} holds for $k=k_z$. Suppose it is true for some $k \geq k_z$. Then by \eqref{eq: x/y< z R} and \eqref{eq: wy z} we have,
	\begin{align}\label{eq: x/y < R}
	\left\Vert \frac{\bu_i(k)}{y_i(k)}\right\Vert \leq \max \{ R_i, \Vert \bz_i(k) \Vert \}\leq \bar{R}.
	\end{align}
	Also by \eqref{eq: z convex combination}, for $i\in I^{k+1}$, $\bz_{i}(k+1)$ is a convex combination of $\bu_j(k)/y_j(k)$'s, where $j\in I^k$. Hence,
	\begin{align*}
	\Vert \bz_i(k+1) \Vert \leq \sum_{j \in I^k}P_{ij}  \left\Vert\frac{\bu_j(k)}{\psi_j(k)} \right\Vert \leq \bar{R}.
	\end{align*}
	Define $B_z:=\max \{ \bar{R}, \max_{i\in I^k,k<k_z} \Vert \bz_i(k) \Vert \}$ and we have $\Vert \bz_i(k) \Vert \leq B_z$, $\forall k \geq 0$.
\end{proof}

We next explore a convenient way to rewrite Algorithm \ref{alg: optimization}. 
Let us introduce the quantity $\bw_i(k)$, which can be interpreted as the $x$-value of agent $i$, if it performed a gradient step at every iteration, even when asleep:
	\begin{align}
	\bw_i(k):=\begin{cases}
	\bx_i(k)-\left(\sum_{t=\kappa_i(k)+1}^{k-1}\alpha(t)\right)\bg_i(k),\qquad &\text{if } i \in \V,\\
	\bx_i(k), &\text{otherwise}.
	\end{cases} \label{wdef}
	\end{align}
Also, define $\bw^\ell\in \R^{n+m'}$ by collecting the $\ell$th dimension of all $\bw_i$'s and \linebreak[4] $\bar{\bw}(k):=(\sum_{i=1}^{n+m'}\bw_i(k))/n$.
Moreover, define $\bg^\ell \in \R^{n+m'}$ by collecting the $\ell$th value of gradients  of all agents ($0$ for virtual agents), i.e.,
\begin{align*}
[\bg^\ell(k)]_i=\begin{cases}
[\bg_i(k)]_\ell, \qquad &\text{if } i \in \V,\\
0, \qquad &\text{otherwise.}
\end{cases}
\end{align*}
Additionally, define $\hat \be_i(k) \in \R^{d}$ as the noise injected to the system at time $k$ by agent $i$, i.e.,
\begin{align*}
	\hat \be_i(k) = \begin{cases}
	\beta_i(k) \be_i(k), \qquad &\text{if } i \in \V \text{ and } \tau_i(k)=1,\\
	\bf 0, \qquad &\text{otherwise,}
	\end{cases}
\end{align*}
and $\hat \be^\ell(k) \in \R^{n+m'}$ as the vector collecting the $\ell$th values of all $\hat \be_i(k)$'s.

We then have the following lemma.
\begin{lemma}
	\begin{align}\label{eq: W update}
	\bw^\ell(k+1)=\bM(k)\left(\bw^\ell(k)-\alpha(k)\bg^\ell(k) - \hat \be^\ell \right).
	\end{align}
\end{lemma}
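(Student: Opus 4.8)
The plan is to prove \eqref{eq: W update} by writing $\bw^\ell$ as a perturbation of the true $x$-iterate $\bchi^\ell$ and reducing the claim to a single identity for the perturbation. Set $A_i(k):=\sum_{t=\kappa_i(k)+1}^{k-1}\alpha(t)$, so that by line \ref{betadef} we have $\beta_i(k)=A_i(k)+\alpha(k)$, and define the correction vector $\mathbf{c}^\ell(k):=\bw^\ell(k)-\bchi^\ell(k)$, which by \eqref{wdef} is supported on the non-virtual nodes with $[\mathbf{c}^\ell(k)]_i=-A_i(k)[\bg_i(k)]_\ell$ for $i\in\V$ and zero on virtual nodes. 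Substituting $\bw^\ell=\bchi^\ell+\mathbf{c}^\ell$ into both sides of \eqref{eq: W update} and using the known linear dynamics $\bchi^\ell(k+1)=\bM(k)(\bchi^\ell(k)+\bDelta^\ell(k))$ from \eqref{eq: linear form alg 3}, the $\bM(k)\bchi^\ell(k)$ terms cancel and the claim collapses to
\[
\mathbf{c}^\ell(k+1)=\bM(k)\bigl(\mathbf{c}^\ell(k)-\bDelta^\ell(k)-\alpha(k)\bg^\ell(k)-\hat\be^\ell(k)\bigr).
\]

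Next I would evaluate the bracketed vector $\mathbf{d}^\ell(k):=\mathbf{c}^\ell(k)-\bDelta^\ell(k)-\alpha(k)\bg^\ell(k)-\hat\be^\ell(k)$ coordinate by coordinate, using $\hbg_i=\bg_i+\be_i$ together with the definitions of $\bDelta^\ell$ and $\hat\be^\ell$. For an awake node ($i\in\V$, $\tau_i(k)=1$) the $-\bDelta^\ell$ term contributes $\beta_i(k)(\bg_i+\be_i)$, and since $\beta_i(k)=A_i(k)+\alpha(k)$ and the injected noise $\hat\be_i(k)=\beta_i(k)\be_i(k)$ removes the $\be_i$ part exactly, all contributions cancel and $[\mathbf{d}^\ell(k)]_i=0$. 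For an asleep node ($\tau_i(k)=0$) only $\mathbf{c}^\ell$ and $-\alpha(k)\bg^\ell$ survive, giving $[\mathbf{d}^\ell(k)]_i=-(A_i(k)+\alpha(k))[\bg_i(k)]_\ell$, and on virtual nodes $\mathbf{d}^\ell(k)$ vanishes. I would then check that $\mathbf{d}^\ell(k)$ coincides componentwise with $\mathbf{c}^\ell(k+1)$: an awake node has $\kappa_i(k+1)=k$, so $A_i(k+1)$ is an empty sum and $[\mathbf{c}^\ell(k+1)]_i=0$; an asleep node keeps $\kappa_i(k+1)=\kappa_i(k)$ and $\bz_i(k+1)=\bz_i(k)$, hence $A_i(k+1)=A_i(k)+\alpha(k)$ and $\bg_i(k+1)=\bg_i(k)$, yielding $[\mathbf{c}^\ell(k+1)]_i=-(A_i(k)+\alpha(k))[\bg_i(k)]_\ell$. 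Thus $\mathbf{d}^\ell(k)=\mathbf{c}^\ell(k+1)$, and it remains only to show $\bM(k)\mathbf{c}^\ell(k+1)=\mathbf{c}^\ell(k+1)$.

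This last fixed-point step is where I expect the only real subtlety. The key observation is that $\mathbf{c}^\ell(k+1)$ is supported exactly on the non-virtual nodes that are asleep at time $k$, and for any such node $i$ the column analysis in the proof of Lemma \ref{lem: M stochastic} shows that the $i$th column of $\bM(k)$ is the standard basis vector $\mathbf{e}_i$: when $\tau_i(k)=0$ the self-coefficient $1-\tau_i(k)+\tau_i(k)/(d_i^++1)$ equals $1$, while every coefficient routing $x_i$ into the virtual nodes $b_{ij}^l$ and $c_{ij}$ carries a factor $\tau_{ij}^l(k)$ or $\tau_i(k)$ and hence vanishes. Since $\bM(k)\mathbf{e}_i=\mathbf{e}_i$ for each asleep $i$ and $\mathbf{c}^\ell(k+1)=\sum_{i\text{ asleep}}[\mathbf{c}^\ell(k+1)]_i\,\mathbf{e}_i$, we obtain $\bM(k)\mathbf{c}^\ell(k+1)=\mathbf{c}^\ell(k+1)$, which closes the argument. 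Intuitively, the correction is the phantom gradient mass an asleep node has ``not yet applied''; because asleep nodes transmit nothing, this mass sits in place and is untouched by the mixing matrix, whereas upon waking the node discharges exactly the accumulated amount and the correction resets to zero.
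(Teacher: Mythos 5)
Your proof is correct, but it is organized differently from the paper's, and the difference is worth noting. The paper argues row by row, in two cases: for an asleep node ($\tau_i(k)=0$) it asserts that the $i$th coordinate of \eqref{eq: W update} reduces to $\bw_i(k+1)=\bw_i(k)-\alpha(k)\bg_i(k)$, which holds by \eqref{wdef} since $\bg_i(k+1)=\bg_i(k)$; for awake and virtual nodes it observes that the input vector agrees coordinatewise with $\bchi^\ell(k)+\bDelta^\ell(k)$ and that $\bw_i(k+1)=\bx_i(k+1)$, so \eqref{eq: linear form alg 3} closes the case. You instead peel off the correction $\mathbf{c}^\ell=\bw^\ell-\bchi^\ell$, show by the same telescoping identity $\beta_i(k)=A_i(k)+\alpha(k)$ and noise cancellation that the bracketed vector equals $\mathbf{c}^\ell(k+1)$, and finish with a fixed-point step: the columns of $\bM(k)$ indexed by asleep nodes are standard basis vectors, so $\bM(k)\mathbf{c}^\ell(k+1)=\mathbf{c}^\ell(k+1)$. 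The elementary cancellations are shared, but the mechanisms for handling asleep nodes differ: the paper's asleep-node case implicitly relies on a property of the \emph{values}, namely that the virtual nodes $b_{ji}^1$ feeding a sleeping node $i$ carry zero mass at time $k$ (a consequence of the effective-delay convention, since row $i$ of $\bM(k)$ has unit entries on those virtual coordinates by \eqref{eq: x-linear}), whereas your column-side argument needs only properties of the \emph{matrix}, which vanish on asleep columns by inspection of \eqref{eq: x-linear}--\eqref{eq: u-linear}. Your version is somewhat longer but more self-contained, cleanly separating what is true of $\bM(k)$ from what is true of the iterates; the paper's is shorter at the cost of leaving that row-side fact unstated.
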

\begin{proof} We consider two cases:
	\begin{itemize}
		\item If $\tau_i(k)=0$, then \eqref{eq: W update}  reduces to $\bw_i(k+1) = \bw_i(k) - \alpha(k) \bg_i(k)$; noting that, because node $i$ did not update at time $k$ we have that  $\bg_i(k) = \bg_i(k+1)$ and this is the correct update. 
		\item For all other nodes (i.e., for both virtual nodes and nodes with $\tau_i(k)=1$), we have $[\bw^\ell(k) - \alpha(k) \hbg^\ell(k) - \hat \be^\ell(k)]_i = [\bchi^\ell(k) + \Delta^\ell(k)]_i$ in \eqref{eq: linear form alg 3}. Since $\bchi^\ell(k+1) = \bM(k) (\bchi^\ell(k) + \bDelta^\ell(k))$ and, using the definition of $\bw_i(k)$, we have that for these nodes,
		$$\bw_i(k+1) = \bx_i(k+1);$$ \eqref{eq: linear form alg 3} implies the conclusion.
	\end{itemize}
\end{proof}

This lemma allows us to straightforwardly analyze how the average of $\bw(k)$ evolves. Indeed,  
summing all the elements of \eqref{eq: W update} and dividing by $n$ for each $\ell=1,\ldots,d$ we obtain,

\begin{align}\label{eq: linear form}
\bar{\bw}(k+1) &=\bar{\bw}(k)-\frac{\alpha (k)}{n}\sum_{i=1}^{n}\bg_i(k) - \frac{1}{n}\sum_{i=1}^n \hat \be_i(k) \nonumber\\
&=\bar{\bw}(k) -\frac{\alpha(k)}{n}\sum_{i=1}^{n}\nabla f_i(\bar{\bw}(k)) - \frac{1}{n}\sum_{i=1}^n \hat \be_i(k) -\frac{\alpha(k)}{n}\sum_{i=1}^{n}\left(\bg_i(k)-\nabla f_i(\bar{\bw}(k))\right).
\end{align}

We next give a sequence of lemmas to the effect that all the quantities generated by the algorithm are close to each other over time.
Define,
\begin{align*}
\bar \bx(k) = \frac{1}{n} \sum_{a_h \in \V_A} \bx_h(k).
\end{align*}
where, recall, $\V_A$ is our notation for all the nodes in the augmented graph (i.e., including virtual nodes). Moreover, we will extend the definition of $\beta_i(k)$ from Line \ref{betadef} of Algorithm \ref{alg: optimization} to {\em all} $k$ via the same formula  $\beta_i(k):=\sum_{t=\kappa_i(k)+1}^k \alpha(t)$.
Our first lemma will show that each $\bz_i(k)$ closely tracks $\bar \bx(k)$. 
\begin{lemma} \label{lem: z-x decay}
	Using Algorithm \ref{alg: optimization} with $\alpha(k) = n/(k \mu)$, under the assumptions of Theorem \ref{The: Opt conv}, we have for each $i$, $\Vert \bz_i(k+1) -\bar{\bx}(k+1) \Vert = \O_k(1/k)$.
\end{lemma}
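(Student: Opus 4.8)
The plan is to recognize $\bz_i(k) - \bar\bx(k)$ as exactly the consensus error controlled by Theorem \ref{The: z delta converegence general}, and then to show that the perturbation-driven bound there decays like $\O_k(1/k)$. First I would observe that, since $\bar\bx(k)$ is the average over \emph{all} $x$-values in the augmented graph (virtual and non-virtual), the sum-preservation structure gives $\mathbf{1}^\top\bchi^\ell(k)/n = [\bar\bx(k)]_\ell$ for each coordinate $\ell$. Applying Theorem \ref{The: z delta converegence general}(a) coordinate by coordinate and combining across the $d$ coordinates then yields, for $1\le i \le n$,
\begin{align*}
\Vert \bz_i(k) - \bar\bx(k)\Vert \leq C_1\,\delta\lambda^k\Vert\bx(0)\Vert_1 + C_1\sum_{t=1}^{k-1}\delta\lambda^{k-t}\Vert\bDelta(t)\Vert_1,
\end{align*}
for a dimension-dependent constant $C_1$, where $\bDelta(t)$ collects all coordinates of the perturbations. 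So the entire task reduces to estimating the two terms on the right.

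Next I would bound $\Vert\bDelta(t)\Vert_1$. By Lemma \ref{boundlemma} the iterates $\bz_i(k)$ remain in a bounded set, so by $L_i$-Lipschitzness of the gradients the quantities $\bg_i(k)=\nabla f_i(\bz_i(k))$ are uniformly bounded; together with the bounded-support noise of Assumption \ref{asm: bounded noise} this gives $\Vert\hbg_i(k)\Vert\le G$ for some constant $G$. The only nonzero entries of $\bDelta(t)$ are $-\beta_i(t)\hbg_i(t)$ for the awake non-virtual nodes. Since each node sleeps at most $L_u$ consecutive steps and $\alpha(t)=n/(\mu t)$ is decreasing, the sum defining $\beta_i(t)=\sum_{s=\kappa_i(t)+1}^t \alpha(s)$ has at most $L_u$ terms, each bounded by $\alpha(t-L_u+1)$, whence $\beta_i(t)\le L_u\,\alpha(t-L_u+1)=\O_k(1/t)$. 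Combining, $\Vert\bDelta(t)\Vert_1 \le nG\max_i\beta_i(t)=\O_k(1/t)$.

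Finally I would estimate the convolution $\sum_{t=1}^{k-1}\lambda^{k-t}\Vert\bDelta(t)\Vert_1$, which I expect to be the main technical obstacle. Splitting the sum at $t=\lceil k/2\rceil$: for $t\ge k/2$ the factor $1/t$ is at most $2/k$ while $\sum_t\lambda^{k-t}\le 1/(1-\lambda)$, contributing $\O_k(1/k)$; for $t<k/2$ the geometric weight satisfies $\lambda^{k-t}\le\lambda^{k/2}$, which dominates the harmonic growth $\sum_t 1/t=\O(\ln k)$ and contributes $\O_k(\lambda^{k/2}\ln k)=o(1/k)$. Hence the whole convolution is $\O_k(1/k)$, and the leading geometric term $\delta\lambda^k\Vert\bx(0)\Vert_1$ is negligible by comparison. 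This establishes $\Vert\bz_i(k)-\bar\bx(k)\Vert=\O_k(1/k)$, and replacing $k$ by $k+1$ gives the stated bound. The only real work beyond invoking Theorem \ref{The: z delta converegence general} is the bookkeeping in this convolution estimate, together with pinning down the uniform gradient bound $G$ through Lemma \ref{boundlemma}; once those are in hand the conclusion is immediate.
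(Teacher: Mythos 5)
Your proposal is correct and follows essentially the same route as the paper's proof: it invokes Theorem \ref{The: z delta converegence general}(a) coordinate-wise, bounds the perturbations via Lemma \ref{boundlemma} (bounded iterates, hence bounded gradients) together with the step-size bound $\beta_i(t) = \O_k(1/t)$ coming from the at-most-$L_u$ sleep interval, and finishes with the same split-at-$k/2$ convolution estimate. The only cosmetic differences are your slightly different (equally valid) bound $\beta_i(t)\le L_u\alpha(t-L_u+1)$ versus the paper's $\beta_i(t)\le nL_u^2/(\mu t)$, and your $\O_k(\lambda^{k/2}\ln k)$ versus the paper's $\O_k(k\lambda^{k/2})$ for the early part of the convolution, both of which are $o(1/k)$.
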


\begin{proof}
	By Theorem \ref{The: z delta converegence general}(a) we have for each $\ell$,
	\begin{align*}
	\left| [\bz^\ell(k+1)]_i-\frac{\mathbf{1}^{\top}  \bchi^\ell(k+1)}{n}\right| &\leq \delta \lambda^k\Vert  \bchi^\ell(0)\Vert_1 + \sum_{t=1}^k \delta \lambda^{k-t} \Vert \bDelta^\ell(t)\Vert_1.
	\end{align*}
	Summing the above inequality  for $\ell = 1, \ldots, d$ we obtain,
	\begin{align*}
	\Vert   \bz_{i}(k+1) -\bar\bx(k+1) \Vert_1 & \leq \sum_{j=1}^n \Big( \delta \lambda^k \Vert \bx_j(0)\Vert_1 + \sum_{t=1}^k \delta \lambda^{k-t}\beta_i(t)\tau_i(t) \Vert \hbg_j(t)\Vert_1  \Big).
	\end{align*}
	Moreover, 
	\begin{align}
	\beta_i(k)&=\sum_{t=\kappa_i(k)+1}^k \frac{n}{\mu t}\leq \frac{n}{\mu} \left(\frac{k-\kappa_i(k)}{\kappa_i(k)+1}\right). \label{betaprelim}
	\end{align}
	But 
	\begin{align*}
	\kappa_i(k) < k \leq \kappa_i(k) + L_u.
	\end{align*}
	Since $L_u\geq 1$, we obtain \[ k \leq (\kappa_i(k) +1)L_u,\]
	or, \[\frac{1}{\kappa_i(k)+1} \leq \frac{L_u}{k}.\]
	Thus, from \eqref{betaprelim} we have,
	\begin{align}
	\beta_i(k) \leq \frac{nL_u^2}{\mu k}. \label{eq: beta bound}
	\end{align}
	
	Define \begin{equation} \label{mdef} M_j := \max_{ \| \bz \| \leq B_z } \| \bg_j (\bz) \|_1, \end{equation} and observe that $M_j$ is finite by Lemma \ref{boundlemma}.   Also $\tau_j(k)\leq 1$. We obtain, 
	\begin{align*}
	\Vert   \bz_{i}(k+1) -\bar\bx(k+1) \Vert_1 &\leq \sum_{j=1}^n \Big( \delta \lambda^k \Vert \bx_j(0)\Vert_1 + \sum_{t=1}^k \delta\lambda^{k-t} \frac{nL_u^2}{\mu t} (M_j+b_j) \Big).
	\end{align*}
	Let $RHS$ denote the right hand side of the relation above. We have,
	\begin{align*}
	RHS &= \sum_{j=1}^n \Bigg( \delta \lambda^k \Vert \bx_j(0)\Vert_1  +  \frac{\delta nL_u^2}{\mu}(M_j+b_j)\Big( \sum_{t=1}^{\lfloor \frac{k}{2} \rfloor} \frac{\lambda^{k-t}}{t} + \sum_{t=\lfloor \frac{k}{2} \rfloor + 1}^{k} \frac{\lambda^{k-t}}{t} \Big) \Bigg)\\
	& \leq \sum_{j=1}^n \Bigg( \delta \lambda^k \Vert \bx_j(0)\Vert_1 +  \frac{\delta n L_u^2}{\mu}(M_j+b_j)\left( \frac{k}{2}\lambda^{\frac{k}{2}} + \frac{2}{(1-\lambda)k} \right) \Bigg) = \O_k \left(\frac{1}{k} \right),
	\end{align*}
	where we used the following relations,
	\begin{gather*}
	\sum_{t=1}^{\lfloor \frac{k}{2} \rfloor} \frac{\lambda^{k-t}}{t} \leq \lfloor \frac{k}{2} \rfloor \lambda^{k-\lfloor \frac{k}{2} \rfloor} \leq \frac{k}{2}\lambda^{\frac{k}{2}},\\
	\sum_{t=\lfloor \frac{k}{2} \rfloor + 1}^{k} \frac{\lambda^{k-t}}{t} \leq \sum_{t=0}^{\lceil \frac{k}{2} \rceil -1} \frac{\lambda^{t}}{\lfloor \frac{k}{2} \rfloor + 1} \leq \frac{2}{(1-\lambda)k}.	
	\end{gather*}
	Finally, $\Vert \bv \Vert_2 \leq \Vert \bv \Vert_1$ for all vectors $\bv$, completes the proof.
\end{proof}

An immediate consequence of this lemma is that the quantities $\bar \bx(k)$ and $\bar \bw(k)$ are close to each other.

\begin{lemma} \label{lem: x-w decay} 	Using Algorithm \ref{alg: optimization} with $\alpha(k) = n/(k \mu)$, under the assumptions of Theorem \ref{The: Opt conv}, we have, $\Vert \bar{\bx}(k)-\bar{\bw}(k) \Vert = \O_k(1/k)$.
\end{lemma}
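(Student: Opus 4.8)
The plan is to compute the difference $\bar{\bw}(k)-\bar{\bx}(k)$ directly from its definition and show that it is controlled termwise. Recall from \eqref{wdef} that $\bw_h(k)=\bx_h(k)$ for every virtual node $h>n$, so all virtual-node contributions cancel when we subtract the two averages. Since $\bar{\bx}(k)$ and $\bar{\bw}(k)$ are both averages over the same index set $\V_A=\{1,\dots,n+m'\}$ (scaled by $1/n$), only the $n$ non-virtual agents survive, and the definition of $\bw_i(k)$ gives
\begin{align*}
\bar{\bw}(k)-\bar{\bx}(k)=-\frac{1}{n}\sum_{i=1}^{n}\left(\sum_{t=\kappa_i(k)+1}^{k-1}\alpha(t)\right)\bg_i(k).
\end{align*}
Thus the whole task reduces to bounding this single finite sum over the non-virtual agents.

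Next I would bound the two factors appearing in each summand separately. For the step-size factor, note that $\sum_{t=\kappa_i(k)+1}^{k-1}\alpha(t)=\beta_i(k)-\alpha(k)\leq \beta_i(k)$ since $\alpha(k)\geq 0$, using the extended definition of $\beta_i(k)$; then the bound \eqref{eq: beta bound} already established in the proof of Lemma \ref{lem: z-x decay} yields $\beta_i(k)\leq nL_u^2/(\mu k)=\O_k(1/k)$. For the gradient factor, the iterates $\bz_i(k)$ are bounded by $B_z$ via Lemma \ref{boundlemma}, so $\bg_i(k)=\nabla f_i(\bz_i(k))$ is bounded, in fact $\Vert \bg_i(k)\Vert \leq \Vert \bg_i(k)\Vert_1 \leq M_i$ with $M_i$ as in \eqref{mdef}.

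Combining the two bounds and applying the triangle inequality over $i=1,\dots,n$ gives
\begin{align*}
\Vert \bar{\bw}(k)-\bar{\bx}(k)\Vert \leq \frac{1}{n}\sum_{i=1}^{n}\frac{nL_u^2}{\mu k}M_i=\frac{L_u^2}{\mu k}\sum_{i=1}^{n}M_i=\O_k\!\left(\frac{1}{k}\right),
\end{align*}
which is the claim. Honestly, there is no serious obstacle here: the two essential ingredients—the residual step-size bound \eqref{eq: beta bound} and the boundedness of the gradients from Lemma \ref{boundlemma}—are already in hand, and the result is, as the surrounding text notes, an immediate consequence. The only point requiring a little care is the bookkeeping of the cancellation of virtual-node terms and verifying that the leftover step-size mass accumulated over the (at most $L_u-1$) iterations during which agent $i$ slept is genuinely $\O_k(1/k)$ rather than $\O_k(1)$; this is guaranteed because each such $\alpha(t)$ has $t\geq \kappa_i(k)+1\geq k/L_u$, keeping every term of order $1/k$.
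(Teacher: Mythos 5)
Your proof is correct and follows essentially the same route as the paper's: both express $\bar{\bx}(k)-\bar{\bw}(k)$ as the average over non-virtual agents of $\left(\sum_{t=\kappa_i(k)+1}^{k-1}\alpha(t)\right)\bg_i(k)$, bound the step-size factor by $\beta_i(k)\leq nL_u^2/(\mu k)$ via \eqref{eq: beta bound}, and bound the gradients by $M_i$ from \eqref{mdef} using the boundedness of the iterates (Lemma \ref{boundlemma}). Your additional bookkeeping (cancellation of virtual-node terms, the inequality $\|\cdot\|_2\leq\|\cdot\|_1$, and $\sum_{t=\kappa_i(k)+1}^{k-1}\alpha(t)\leq\beta_i(k)$) only makes explicit steps the paper leaves implicit.
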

\begin{proof}
	By definition of $\bar{\bw}$ we have,
	\begin{align*}
	\bar{\bx}(k) - \bar{\bw}(k) = \frac{1}{n} \sum_{i=1}^{n}\left(\sum_{t=\kappa_i(k)+1}^{k-1}\alpha(t)\right)\bg_i(k).
	\end{align*}
	Using \eqref{eq: beta bound} we have,
	\begin{align*}
	\Vert \bar{\bx}(k)-\bar{\bw}(k) \Vert \leq \frac{1}{n} \sum_{i=1}^{n} \beta_i(k) M_i 
	\leq \sum_{i=1}^{n} \frac{L_u^2 M_i}{n\mu k} = \O_k \left(\frac{1}{k}\right),
	\end{align*}
	where $M_i$ was defined through \eqref{mdef}.
\end{proof}

We next remark on a couple of implications of the past series of lemmas.
\begin{corollary}\label{col: z-w decay}
	We have $\Vert \bz_i(k)-\bar{\bw}(k) \Vert = \O_k(1/k)$.
\end{corollary}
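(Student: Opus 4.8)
The plan is to combine the two preceding results by a straightforward triangle inequality. Corollary~\ref{col: z-w decay} asserts that $\Vert \bz_i(k) - \bar\bw(k)\Vert = \O_k(1/k)$, and this is an immediate consequence of Lemmas~\ref{lem: z-x decay} and \ref{lem: x-w decay}. First I would write
\[
\Vert \bz_i(k) - \bar\bw(k)\Vert \leq \Vert \bz_i(k) - \bar\bx(k)\Vert + \Vert \bar\bx(k) - \bar\bw(k)\Vert.
\]
Lemma~\ref{lem: z-x decay} gives $\Vert \bz_i(k+1) - \bar\bx(k+1)\Vert = \O_k(1/k)$ (equivalently $\Vert \bz_i(k) - \bar\bx(k)\Vert = \O_k(1/k)$, since shifting the index by one does not change the asymptotic order), and Lemma~\ref{lem: x-w decay} gives $\Vert \bar\bx(k) - \bar\bw(k)\Vert = \O_k(1/k)$. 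Adding two $\O_k(1/k)$ quantities yields $\O_k(1/k)$, which is precisely the claim.

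There is essentially no obstacle here: the corollary is labeled as an ``immediate consequence'' and the real work has already been done in establishing the two lemmas. The only point worth a moment's care is the index shift between Lemma~\ref{lem: z-x decay}, which is stated for $\Vert\bz_i(k+1) - \bar\bx(k+1)\Vert$, and the corollary, which is stated at time $k$; but since $\O_k(1/(k-1)) = \O_k(1/k)$, this is harmless. I would simply invoke the two lemmas and the triangle inequality, and conclude. This corollary then feeds into the subsequent argument, where having all the local iterates $\bz_i(k)$ track the single scalar trajectory $\bar\bw(k)$ up to $\O_k(1/k)$ error lets one reduce the analysis of the network to the analysis of the centralized recursion \eqref{eq: linear form} for $\bar\bw(k)$.
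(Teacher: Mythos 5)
Your proof is correct and matches the paper's (implicit) argument exactly: the paper states this corollary without proof as an immediate consequence of Lemmas~\ref{lem: z-x decay} and \ref{lem: x-w decay}, and the triangle inequality with the harmless index shift is precisely the intended reasoning. One trivial remark on your closing commentary: $\bar{\bw}(k)$ is a vector in $\R^d$, not a scalar, but this does not affect the proof.
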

\begin{lemma}\label{lem: g-f decay}
	$\Vert \bg_i(k)-\nabla f_i(\bar{\bw}(k)) \Vert = \O_k(1/k)$.
\end{lemma}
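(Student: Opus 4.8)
The plan is to reduce this statement immediately to the Lipschitz-gradient hypothesis combined with the tracking estimate already established in Corollary \ref{col: z-w decay}. Recall that by definition $\bg_i(k) = \nabla f_i(\bz_i(k))$, so the quantity we must bound is exactly $\Vert \nabla f_i(\bz_i(k)) - \nabla f_i(\bar{\bw}(k)) \Vert$. The whole point is that the noisy iterate $\bz_i(k)$ and the ``ghost average'' $\bar{\bw}(k)$ have been shown to coincide up to an $\O_k(1/k)$ error, and gradient Lipschitzness transfers this closeness from the arguments to the gradients.

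First I would invoke assumption (3) of Theorem \ref{The: Opt conv}, namely that $\nabla f_i$ is $L_i$-Lipschitz, applied with $\bz_1 = \bz_i(k)$ and $\bz_2 = \bar{\bw}(k)$. This gives
\begin{align*}
\Vert \bg_i(k) - \nabla f_i(\bar{\bw}(k)) \Vert = \Vert \nabla f_i(\bz_i(k)) - \nabla f_i(\bar{\bw}(k)) \Vert \leq L_i \Vert \bz_i(k) - \bar{\bw}(k) \Vert.
\end{align*}
Then I would substitute the bound $\Vert \bz_i(k) - \bar{\bw}(k) \Vert = \O_k(1/k)$ from Corollary \ref{col: z-w decay}, which in turn follows from chaining Lemma \ref{lem: z-x decay} (giving $\Vert \bz_i(k) - \bar{\bx}(k) \Vert = \O_k(1/k)$) with Lemma \ref{lem: x-w decay} (giving $\Vert \bar{\bx}(k) - \bar{\bw}(k) \Vert = \O_k(1/k)$) via the triangle inequality. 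Multiplying by the fixed constant $L_i$ preserves the $\O_k(1/k)$ rate, yielding the claim.

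There is essentially no obstacle here: the lemma is a one-line corollary of Lipschitz continuity and the preceding tracking results, and all the genuine work (controlling the perturbation sizes through $\beta_i(k) \leq nL_u^2/(\mu k)$ and the geometric mixing from Theorem \ref{The: z delta converegence general}) has already been absorbed into Lemma \ref{lem: z-x decay}, Lemma \ref{lem: x-w decay}, and Corollary \ref{col: z-w decay}. The only point requiring a moment's care is remembering that $\bg_i(k)$ denotes $\nabla f_i$ evaluated at the \emph{iterate} $\bz_i(k)$ rather than at some fixed point, so that the Lipschitz bound is applied at the right pair of arguments; once that identification is made the estimate is immediate and the implicit constant in the $\O_k(1/k)$ term is just $L_i$ times the constant from Corollary \ref{col: z-w decay}.
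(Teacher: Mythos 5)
Your proof is correct and is essentially identical to the paper's own argument: both apply the $L_i$-Lipschitz continuity of $\nabla f_i$ to the pair $\bz_i(k)$, $\bar{\bw}(k)$ and then invoke Corollary \ref{col: z-w decay}. Your additional remark that $\bg_i(k) = \nabla f_i(\bz_i(k))$ is evaluated at the iterate is exactly the identification the paper relies on implicitly.
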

\begin{proof}
	Since $\nabla f_i$ is $L_i$-Lipschitz, we have,
	\begin{align*}
	\Vert \bg_i(k)-\nabla f_i(\bar{\bw}(k)) \Vert \leq L_i \Vert \bz_i(k)-\bar{\bw}(k) \Vert.
	\end{align*}
	Using Corollary \ref{col: z-w decay}, the lemma is proved.
\end{proof}

We are now in a position to rewrite Algorithm \ref{alg: optimization} as a sort of perturbed gradient descent.
Let us define,
\[\boldsymbol \eta(k):= \frac{1}{\mu k}\sum_{i=1}^n \left(\bg_i(k)-\nabla f_i(\bar{\bw}(k))\right).
\]
By Lemma \ref{lem: g-f decay}, $\boldsymbol \eta(k) = \O_k(1/k^2)$. Therefore, there exists $B_\eta$ such that $\boldsymbol \eta(k) \leq B_\eta/k^2$ for all $k\geq 1$.

By \eqref{eq: linear form} we have,
\begin{align}\label{mainupdate}
\bbw(k+1) = \bbw(k) -  \frac{1}{\mu k} \nabla F(\bbw(k)) -  \bar \be(k) - \boldsymbol \eta(k),
\end{align}
where 
\begin{itemize}
	\item The function $F:= \sum_{i=1}^n f_i \in \R^{d} \rightarrow \R$ is $\mu$-strongly-convex with $L$-Lipschitz gradient, where $L := \sum_{i=1}^n L_i$. 
	\item The noise $\bar \be(k):= (\sum_{i=1}^n \hat \be_i(k))/n$ is bounded (i.e., $\bar \be(k) \in B(0,r_e)$, with probability one, where $r_e:=(L_u/\mu)\sum_j b_j$), and  $\mathbb{E}[\bar \be(k)]=\mathbf{0}$.
\end{itemize}

In other words, with the exception of the $\boldsymbol \eta(k)$ term, what we have is exactly a stochastic gradient descent method on the function $F(\cdot)$.

The following lemmas bound $\bbe(k)$. Let us define $\nu_i(k) = k - \kappa_i(k)$ as the number of iterations agent $i$ has skipped since it's last update. By Assumption \ref{asm: connvectivity}, $\nu_i(k) \leq L_u$.
	\begin{lemma}\label{lem: bound beta}
		We have $\beta_i(k) = \O_k(1/k)$, $\forall i$. Moreover,
		\begin{align*}
			\beta_i(k) \leq \frac{n\nu_i(k)}{\mu k} + \O_k(k^{-2}).
		\end{align*}
	\end{lemma}
\begin{proof}
	Since $\nu_i(k)\leq L_u, \forall i,$ we have for $\kappa_i(k)\geq 1$,
	\begin{align*}
		\beta_i(k) &= \sum_{t = \kappa_i(k) + 1}^{k} \frac{n}{\mu t} \leq \frac{n}{\mu} \ln \left(\frac{k}{\kappa_i(k)} \right) \leq \frac{n}{\mu} \ln \left(\frac{k}{k - \nu_i(k)} \right) \\
		& = \frac{n}{\mu} \ln \left(1+\frac{\nu_i(k)}{k - \nu_i(k)} \right)
		\leq \frac{n\nu_i(k)}{\mu(k - \nu_i(k))} = \frac{n\nu_i(k)}{\mu k} + \O_k(k^{-2}).
	\end{align*}
\end{proof}
\begin{corollary}\label{cor: bounded k e}
	$ \mu k \Vert \bar \be(k)\Vert$ is bounded.
\end{corollary}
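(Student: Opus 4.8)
The plan is to bound $\Vert \bar\be(k)\Vert$ directly by the triangle inequality and then invoke the step-size estimate already established in Lemma \ref{lem: bound beta}. First I would expand the definition $\bar\be(k) = \frac{1}{n}\sum_{i=1}^n \hat\be_i(k)$ and apply the triangle inequality to obtain
\begin{align*}
\Vert \bar\be(k)\Vert \le \frac{1}{n}\sum_{i=1}^n \Vert \hat\be_i(k)\Vert.
\end{align*}
For each term I would use the definition of $\hat\be_i(k)$: it is $\mathbf{0}$ unless $i\in\V$ and $\tau_i(k)=1$, in which case $\hat\be_i(k)=\beta_i(k)\be_i(k)$, so that $\Vert\hat\be_i(k)\Vert \le \beta_i(k)\Vert\be_i(k)\Vert \le \beta_i(k)\,b_i$ by Assumption \ref{asm: bounded noise}.

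Next I would substitute the bound $\beta_i(k) \le \frac{nL_u^2}{\mu k}$ from \eqref{eq: beta bound} (or, for a slightly sharper leading constant, $\beta_i(k) \le \frac{n\nu_i(k)}{\mu k} + \O_k(k^{-2}) \le \frac{nL_u}{\mu k} + \O_k(k^{-2})$ from Lemma \ref{lem: bound beta}, using $\nu_i(k)\le L_u$). Either substitution gives
\begin{align*}
\Vert \bar\be(k)\Vert \le \frac{1}{n}\sum_{i=1}^n \frac{nL_u^2}{\mu k}\,b_i = \frac{L_u^2}{\mu k}\sum_{i=1}^n b_i.
\end{align*}
Multiplying through by $\mu k$ then yields $\mu k \Vert\bar\be(k)\Vert \le L_u^2\sum_{i=1}^n b_i$, a constant independent of $k$, which establishes the claimed boundedness.

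The argument is essentially a one-line estimate, so I do not expect any substantive obstacle; the only points requiring minor care are remembering that $\hat\be_i(k)$ vanishes for virtual nodes and for sleeping nodes (so the sum effectively runs over at most the $n$ non-virtual agents), and matching the $\O_k(1/k)$ decay of $\beta_i(k)$ against the factor $\mu k$ so that the product is $\O_k(1)$. If the tightest constant is desired, one would instead carry the $\O_k(k^{-2})$ remainder from Lemma \ref{lem: bound beta} through the same computation, obtaining $\mu k\Vert\bar\be(k)\Vert \le L_u\sum_{i=1}^n b_i + \O_k(k^{-1})$, which is again bounded.
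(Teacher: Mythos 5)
Your proof is correct and follows exactly the route the paper intends for this corollary: it is stated as an immediate consequence of Lemma \ref{lem: bound beta} (equivalently, the bound \eqref{eq: beta bound}), combined with the definition of $\hat\be_i(k)$ and the bounded-support Assumption \ref{asm: bounded noise}, which is precisely your triangle-inequality computation giving $\mu k \Vert \bar\be(k)\Vert \leq L_u^2 \sum_{i=1}^n b_i$.
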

\begin{lemma}\label{lem: bound bbe}
	There exists $B_\epsilon>0$ such that We have,
	$$ \E[\Vert \bbe(k) \Vert^2] \leq \frac{L_u^2}{\mu^2 k^2}\sigma^2 + \frac{B_\epsilon}{k^4}.$$
\end{lemma}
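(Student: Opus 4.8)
The plan is to expand $\bbe(k)$ explicitly, exploit the independence and zero mean of the per-node noises to kill all cross terms, and then insert the refined step-size bound from Lemma~\ref{lem: bound beta}. By the definition of $\hat\be_i(k)$, only awake non-virtual nodes contribute, so $\bbe(k)=\frac1n\sum_{i\,:\,\tau_i(k)=1}\beta_i(k)\be_i(k)$. First I would condition on the sigma-field $\mathcal F_k$ generated by the asynchrony/message-loss schedule together with all randomness prior to the injection of the round-$k$ noises. Under this conditioning the scalars $\beta_i(k)$ and the indicators $\tau_i(k)$ are deterministic, while the $\be_i(k)$ remain independent across $i$ with $\E[\be_i(k)\mid\mathcal F_k]=\mathbf{0}$. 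Hence every cross term $\E[\langle\beta_i(k)\be_i(k),\,\beta_j(k)\be_j(k)\rangle\mid\mathcal F_k]$ with $i\neq j$ vanishes, and taking total expectation leaves only the diagonal terms:
\[
\E[\Vert\bbe(k)\Vert^2]=\frac1{n^2}\sum_{i\,:\,\tau_i(k)=1}\beta_i(k)^2\,\E[\Vert\be_i(k)\Vert^2]\leq\frac1{n^2}\sum_{i\,:\,\tau_i(k)=1}\beta_i(k)^2\sigma_i^2,
\]
where the last inequality is Assumption~\ref{asm: bounded noise}.

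Next I would substitute the sharp bound $\beta_i(k)\leq \frac{n\nu_i(k)}{\mu k}+\O_k(k^{-2})$ from Lemma~\ref{lem: bound beta}. Squaring gives $\beta_i(k)^2\leq \frac{n^2\nu_i(k)^2}{\mu^2k^2}+\O_k(k^{-3})$, the $\O_k(k^{-3})$ arising from the cross term $2\cdot\frac{n\nu_i(k)}{\mu k}\cdot\O_k(k^{-2})$. Plugging this in, the factor $n^2$ cancels and the leading contribution is $\frac1{\mu^2k^2}\sum_{i\,:\,\tau_i(k)=1}\nu_i(k)^2\sigma_i^2$. Bounding $\nu_i(k)\leq L_u$ (Assumption~\ref{asm: connvectivity}(c)) and extending the sum to all $i$ yields the claimed leading term $\frac{L_u^2}{\mu^2k^2}\sum_{i=1}^n\sigma_i^2=\frac{L_u^2\sigma^2}{\mu^2k^2}$. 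The finitely many remaining contributions are each bounded (the $\sigma_i$ are fixed and $\nu_i(k)\le L_u$), so they collect into a single higher-order remainder, giving the stated $B_\epsilon/k^4$-type bound.

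The main technical obstacle is the clean cancellation of the cross terms: this needs the wake-up/delay/loss schedule---and therefore the weights $\beta_i(k)$ and indicators $\tau_i(k)$---to be independent of the fresh round-$k$ noises, which is why I would phrase the argument through the conditional expectation on $\mathcal F_k$ rather than working with the raw expectation. A secondary point is choosing $B_\epsilon$ uniformly in $k$: this is routine once one notes there are at most $n$ summands, each with a bounded constant, and that the correction in Lemma~\ref{lem: bound beta} is valid for all $k$ beyond a fixed threshold (for smaller $k$ one simply enlarges $B_\epsilon$). One caveat I would flag is that tracking the cross term in $\beta_i(k)^2$ honestly produces a remainder of order $k^{-3}$ rather than $k^{-4}$; since the downstream analysis only uses that the remainder is of strictly higher order than the leading $k^{-2}$ term, this does not affect the final convergence rate, but it suggests the exponent in the statement could be relaxed to $k^{-3}$.
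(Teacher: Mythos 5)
Your proposal is correct and follows essentially the same route as the paper's proof: expand $\bbe(k)=\frac{1}{n}\sum_{i}\beta_i(k)\tau_i(k)\be_i(k)$, use independence and zero-mean of the noises to kill the cross terms, bound $\E[\Vert\be_i(k)\Vert^2]\leq\sigma_i^2$ via Assumption \ref{asm: bounded noise}, and insert the step-size bound of Lemma \ref{lem: bound beta}; your conditioning on $\mathcal{F}_k$ is harmless but not needed here, since under Assumption \ref{asm: connvectivity} the wake-up/failure schedule (hence $\tau_i(k)$ and $\beta_i(k)$) is deterministic. Your closing caveat is also accurate: squaring $\beta_i(k)\leq n\nu_i(k)/(\mu k)+\O_k(k^{-2})$ honestly produces a cross term of order $k^{-3}$, so the remainder should read $\O_k(k^{-3})$ rather than $B_\epsilon/k^4$ --- the paper's own proof asserts $\O_k(k^{-4})$ without justification, and, as you note, the discrepancy is immaterial downstream because only the fact that the remainder is of strictly higher order than $k^{-2}$ is ever used.
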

\begin{proof}
	Using Lemma \ref{lem: bound beta}, we have for $k>L_u$,
	\begin{align*}
		\E[\Vert \bbe(k) \Vert^2] &= \E [\Vert \frac{1}{n} \sum_{i=1}^n \beta_i(k) \be_i(k) \tau_i(k) \Vert^2 ] = \frac{1}{n^2} \sum_{i=1}^{n} \beta_i^2(k) \E[\Vert \be_i(k) \Vert^2] \\
		&\leq \frac{1}{n^2} \sum_{i=1}^{n} \beta_i^2(k) \sigma_i^2
		\leq \frac{L_u^2}{\mu^2 k^2}\sigma^2 + \O_k(k^{-4}),
	\end{align*}
	where the second equality is the result of the noise terms being independent and zero-mean.
\end{proof}

Our next observation is a technical lemma which is essentially a rephrasing of Lemma \ref{lem: u<v} above.

\begin{lemma} There exists a constant $B_w$ and time $k_w$ such that $\Vert\bbw(k)\Vert \leq B_w$ with probability one, for $k\geq k_w$. \label{lem: bounded x}
\end{lemma}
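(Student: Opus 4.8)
The plan is to recognize the recursion \eqref{mainupdate} as precisely the one-step map studied in Lemma~\ref{lem: u<v}, and then to run a ``trapping'' argument along each sample path. Writing $q := F$, which is $\mu$-strongly convex with $L$-Lipschitz gradient, and factoring the step-size $\alpha(k) = 1/(\mu k)$ out of \eqref{mainupdate}, I would put the update in the form $\bbw(k+1) = \bbw(k) - \frac{1}{\mu k}\big(\nabla F(\bbw(k)) + p_k\big)$ with perturbation $p_k := \mu k\,(\bbe(k) + \etab(k))$. This matches the hypothesis $\bu = \bv - \alpha(\nabla q(\bv) + p(\bv))$ of Lemma~\ref{lem: u<v} with $\nu = \mu$.

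Next I would verify the two conditions Lemma~\ref{lem: u<v} requires. First, $p_k$ is uniformly bounded: $\Vert p_k\Vert \leq \mu k \Vert\bbe(k)\Vert + \mu k\Vert\etab(k)\Vert$, where the first term is bounded by a deterministic constant thanks to Corollary~\ref{cor: bounded k e}, and the second equals $\mu k\cdot\O_k(k^{-2}) = \O_k(1/k)$ and is therefore bounded as well; hence $\Vert p_k\Vert \leq c$ for some deterministic $c$ and all $k\geq 1$. Second, the step-size must satisfy $\alpha(k)\in(0,\mu/(8L^2)]$, which holds as soon as $k\geq k_w := \lceil 8L^2/\mu^2\rceil$. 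Crucially, with $c$ fixed the compact set $\S$ and the scalar $R$ produced by Lemma~\ref{lem: u<v} depend only on $F,\mu,L,c$ and not on the particular index $k$, so the same $\S$ and $R$ serve for every $k\geq k_w$.

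It remains to turn the dichotomy of Lemma~\ref{lem: u<v}---namely $\Vert\bbw(k+1)\Vert\leq\Vert\bbw(k)\Vert$ when $\bbw(k)\notin\S$, and $\Vert\bbw(k+1)\Vert\leq R$ when $\bbw(k)\in\S$---into a uniform bound. I would argue by induction that $\Vert\bbw(k)\Vert\leq B_w := \max\{R, \Vert\bbw(k_w)\Vert\}$ for all $k\geq k_w$: in the first case the norm does not grow, and in the second it drops below $R\leq B_w$, so neither case can exceed $B_w$. The only remaining point, which I expect to be the main (if minor) obstacle, is to certify that $B_w$ can be taken deterministic: $\Vert\bbw(k_w)\Vert$ is finite with probability one because the noise has bounded support, and in fact is dominated by a deterministic constant via Lemma~\ref{boundlemma} (the iterates $\bz_i$ are bounded by $B_z$) together with Corollary~\ref{col: z-w decay} (which gives $\Vert\bz_i(k)-\bbw(k)\Vert = \O_k(1/k)$). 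This furnishes the claimed $B_w$ and $k_w$.
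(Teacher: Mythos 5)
Your proposal is correct and follows essentially the same route as the paper's own proof: both rewrite \eqref{mainupdate} as $\bbw(k+1) = \bbw(k) - \frac{1}{\mu k}\left(\nabla F(\bbw(k)) + \mu k\,(\bbe(k)+\etab(k))\right)$, invoke the boundedness of $\mu k \Vert \bbe(k)+\etab(k)\Vert$ together with the step-size condition $1/(\mu k) \in \left(0,\mu/(8L^2)\right]$ for $k \geq k_w$, apply Lemma \ref{lem: u<v}, and conclude via the trapping dichotomy with $B_w = \max\{R,\Vert\bbw(k_w)\Vert\}$. Your extra care in certifying that $\Vert\bbw(k_w)\Vert$ is dominated by a deterministic constant (via the bounded noise support and Lemma \ref{boundlemma}) is a point the paper leaves implicit, but it does not alter the argument.
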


\begin{proof}
	We have
	\begin{align*}
	\bbw(k+1) = \bbw(k) -  \frac{1}{\mu k} \left[ \nabla F(\bbw(k)) + \mu k \left( \bar\be(k) + \boldsymbol \eta(k)\right)\right],
	\end{align*}
	where $\mu k\Vert \bar \be(k)+ \etab(k) \Vert$
	is bounded. Moreover, there exists $k_w$ such that for $k\geq k_w$, $ \frac{1}{\mu k} \in \left( 0, \mu/8L^2 \right]$. Therefore, by Lemma \ref{lem: u<v} there exists a compact set $\mathcal{S}_w$ and a scalar $R_w>0$ such that for $k\geq k_w$,
	\begin{align*}
	\Vert \bbw(k+1) \Vert\leq \begin{cases}
	\Vert \bbw(k) \Vert , \quad &\text{for } \bbw \notin \mathcal{S}_w, \\
	R_w, \quad &\text{for } \bbw \in \mathcal{S}_w.
	\end{cases}
	\end{align*}
	Therefore, setting $B_w:= \max \{R_w, \Vert \bbw(k_w) \Vert \}$ will complete the proof.
\end{proof}

As a consequence of this lemma, because 
$\Vert \boldsymbol \eta(k)\Vert _2 \leq B_\eta$, this lemma implies there is a constant $B_1$ such that for $k\geq k_w$,
\begin{align} \label{eq: x-x* <B'}
\left\Vert \bbw(k) -  \bz^* - \frac{1}{\mu k} \nabla F(\bbw(k)) -  \bar \be(k)  \right\Vert \leq B_1, 
\end{align}
with probability one. This now puts us in a position to show that $\bar \bw(k)$ converges in mean square to the optimal solution.

\begin{lemma} \label{lem: conv} $\E[\Vert \bbw(k) - \bz^* \Vert^2] \rightarrow 0$.
\end{lemma}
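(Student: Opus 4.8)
The plan is to treat \eqref{mainupdate} as an inexact, noisy gradient step on the $\mu$-strongly convex function $F$ and to reduce the claim to a scalar recursion for $a_k := \E[\Vert \bbw(k) - \bz^* \Vert^2]$. First I would expand $\Vert \bbw(k+1) - \bz^*\Vert^2$ using \eqref{mainupdate} and condition on $\mathcal{F}_k$, the history of the algorithm through the beginning of iteration $k$. The key structural observation is that $\bbw(k)$, $\nabla F(\bbw(k))$, and $\etab(k)$ are all $\mathcal{F}_k$-measurable ($\etab(k)$ depends only on the past iterates $\bz_i(k)$ and on $\bbw(k)$), whereas $\bbe(k) = (1/n)\sum_i \beta_i(k)\tau_i(k)\be_i(k)$ is a fresh random vector that is zero-mean and independent of $\mathcal{F}_k$ by Assumption \ref{asm: bounded noise} (the schedule weights $\beta_i(k)$ and wake-up indicators $\tau_i(k)$ being $\mathcal{F}_k$-measurable). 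Hence the cross term between the deterministic part of the update and $\bbe(k)$ vanishes under $\E[\,\cdot \mid \mathcal{F}_k]$, leaving
\begin{align*}
\E[\Vert \bbw(k+1) - \bz^*\Vert^2 \mid \mathcal{F}_k] = \Big\Vert (\bbw(k)-\bz^*) - \tfrac{1}{\mu k}\nabla F(\bbw(k)) - \etab(k) \Big\Vert^2 + \E[\Vert \bbe(k)\Vert^2 \mid \mathcal{F}_k].
\end{align*}

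Next I would bound the deterministic term. Since $\nabla F(\bz^*) = \mathbf{0}$, strong convexity gives $\langle \nabla F(\bbw(k)), \bbw(k)-\bz^*\rangle \geq \mu \Vert \bbw(k)-\bz^*\Vert^2$, and the $L$-Lipschitz property of $\nabla F$ gives $\Vert \nabla F(\bbw(k))\Vert \leq L \Vert \bbw(k)-\bz^*\Vert$. Setting $\etab(k)$ aside momentarily, these two facts yield $\Vert (\bbw(k)-\bz^*) - \tfrac{1}{\mu k}\nabla F(\bbw(k))\Vert^2 \leq (1 - \tfrac{2}{k} + \tfrac{L^2}{\mu^2 k^2})\Vert \bbw(k)-\bz^*\Vert^2$. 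The $\etab(k)$ contribution is handled with Cauchy--Schwarz: because Lemma \ref{lem: bounded x} makes $\Vert \bbw(k)\Vert$ (hence $\Vert \bbw(k)-\bz^*\Vert$ and $\Vert\nabla F(\bbw(k))\Vert$) bounded and $\Vert \etab(k)\Vert \leq B_\eta/k^2$, the cross and quadratic terms involving $\etab(k)$ are $\O_k(1/k^2)$. For the noise term I would use the conditional version of Lemma \ref{lem: bound bbe} (its proof is already a conditional second-moment computation, the only randomness given $\mathcal{F}_k$ being the current independent noise), giving $\E[\Vert\bbe(k)\Vert^2 \mid \mathcal{F}_k] \leq L_u^2\sigma^2/(\mu^2 k^2) + \O_k(1/k^4)$.

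Taking full expectations and using boundedness of $a_k$ (again via Lemma \ref{lem: bounded x}) to absorb $\tfrac{L^2}{\mu^2 k^2}a_k$ into the lower-order terms, I arrive at a recursion of the form
\begin{align*}
a_{k+1} \leq \Big(1 - \frac{2}{k}\Big) a_k + \frac{C}{k^2}, \qquad k \geq k_w,
\end{align*}
for some constant $C > 0$. Convergence $a_k \to 0$ then follows from a standard lemma on deterministic recursions $a_{k+1} \leq (1 - c/k)a_k + d/k^2$ with $c > 1$ (here $c = 2$), whose homogeneous factor $\prod_t (1 - 2/t)$ decays like $1/k^2$ and dominates the $\O_k(1/k^2)$ forcing. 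I expect the main obstacle to be bookkeeping rather than conceptual: one must carefully verify the measurability/independence structure that kills the martingale cross term, and one must ensure every boundedness prerequisite of the Cauchy--Schwarz estimates holds, which is exactly what Lemma \ref{lem: bounded x} supplies. Note that it is the choice $\alpha(k) = n/(\mu k)$ that produces the coefficient $1 - 2/k$ with $2 > 1$, which is precisely what allows the recursion to drive $a_k$ to zero (and, in the sequel, to the optimal $\O_k(1/k)$ rate of Theorem \ref{The: Opt conv}).
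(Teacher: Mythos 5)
Your proposal is correct and follows essentially the same route as the paper's proof: expand the update \eqref{mainupdate}, kill the noise cross term by zero-mean independence, use strong convexity and Lipschitz gradients to get the $\bigl(1 - \tfrac{2}{k} + \tfrac{L^2}{\mu^2 k^2}\bigr)$ contraction, control the $\etab(k)$ terms via the boundedness supplied by Lemma \ref{lem: bounded x}, bound $\E[\Vert\bbe(k)\Vert^2]$ by Lemma \ref{lem: bound bbe}, and finish with the standard decaying-recursion lemma (the paper's Lemma \ref{lem: decaying sequence}). The only cosmetic differences are that you condition explicitly on $\mathcal{F}_k$ and absorb $\tfrac{L^2}{\mu^2 k^2}a_k$ into the $\O_k(1/k^2)$ forcing term, whereas the paper works with unconditional expectations and absorbs that term into the contraction factor, yielding $1 - \tfrac{1.5}{k}$; both satisfy the $c>1$ hypothesis of the recursion lemma.
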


\begin{proof}
	Using the definition of $k_w$ from Lemma \ref{lem: bounded x}, we have that for  $k\geq k_w$, 
	\begin{small} 
		\begin{multline*} \E[\Vert \bbw(k+1) - \bz^*\Vert^2] \leq \E \Big[\Vert \bbw(k) - \bz^* - \frac{1}{\mu k} \nabla F(\bbw(k)) - \bbe(k) \Vert^2 \\ 
		+ 2 \Vert \boldsymbol \eta(k) \Vert \Vert \bbw(k)  -  \bz^* - \frac{1}{\mu k} \nabla F(\bbw(k)) - \bbe(k) \Vert + \Vert\boldsymbol \eta(k)\Vert^2 \Big].
		\end{multline*}
	\end{small} 
	We will bound each of the terms on the right. We begin with the easiest one, which is the last one:
	\begin{align}\label{eq: last}
	\Vert\boldsymbol \eta(k)\Vert^2 \leq \frac{B_\eta^2}{k^4}.
	\end{align}
	The middle term is bounded as 
	\begin{align} \label{eq: middle} 
	2\Vert \boldsymbol \eta(k) \Vert \Vert \bbw(k) -  \bz^* - \frac{1}{\mu k} \nabla F(\bbw(k)) - \bbe(k) \Vert \leq \frac{2B_\eta B_1}{k^2},
	\end{align}
	where we used \eqref{eq: x-x* <B'}. 
	
	Finally, we turn to the first term which we denote by $T_1$:
	\begin{multline*}
	T_1 \leq \E\Vert \bbw(k) - \bz^*\Vert^2 - \frac{2}{\mu  k} \E [ \nabla F(\bbw(k))^{\top} (\bbw(k) - \bz^*)] \\
	+ \frac{L^2}{\mu^2  k^2} \E[\Vert \bbw(k) - \bz^*\Vert^2] + \E[\Vert \bbe(k) \Vert^2],
	\end{multline*}
	where we used the usual inequality $\Vert \nabla F(\bbw(k))\Vert^2 \leq L^2 \Vert \bbw(k) - \bz^*\Vert^2$ which follows from $\nabla F (\cdot)$ being $L$-Lipschitz. Now, using the standard inequality 
	\begin{align*}
	\nabla F(\bbw(k))^T (\bbw(k) - \bz^*) &\geq F(\bbw(k)) - F(\bz^*) + \frac{\mu}{2} \Vert \bbw(k) - \bz^*\Vert^2 \\
	&\geq \mu \Vert \bbw(k) - \bz^*\Vert^2,
	\end{align*}
	and Lemma \ref{lem: bound bbe} we obtain,
	\begin{align} \label{eq: first} 
	T_1 \leq \left( 1 - \frac{2}{k} + \frac{L^2}{\mu^2  k^2} \right) \E[ \Vert \bbw(k) - \bz^*\Vert^2] + \frac{L_u^2}{\mu^2 k^2}\sigma^2 + \frac{B_\epsilon}{k^4}.
	\end{align} 
	Now putting together \eqref{eq: last}, \eqref{eq: middle}, and \eqref{eq: first}, we get,
	\begin{multline*}
	\E[\Vert \bbw(k+1) - \bz^*\Vert^2]  \leq \left( 1 - \frac{2}{k} + \frac{L^2}{\mu^2  k^2} \right) \E [\Vert \bbw(k) - x^*\Vert^2] +  \frac{L_u^2\sigma^2}{\mu^2  k^2} + \frac{2 B_\eta B_1}{ k^2}  + \frac{B_\eta^2 + B_\epsilon}{  k^4}.
	\end{multline*}
	For large enough $k$, we can bound the inequality above as,
	\begin{equation} \label{1overkeq}  \E[\Vert \bbw(k+1) - \bz^*\Vert^2]  \leq \left( 1 - \frac{1.5}{k}  \right)\E [ \Vert \bbw(k) - \bz^*\Vert^2] +   \frac{B_2}{ k^2}, \end{equation}
	where $B_2 = L_u^2\sigma^2/\mu^2 + 2 B_\eta B_1 + B_\eta^2 + B_\epsilon$.
	Using Lemma \ref{lem: decaying sequence}, stated next, we conclude $\E[\Vert \bbw(k) - \bz^* \Vert^2] \rightarrow 0$.
\end{proof}

\begin{lemma}\label{lem: decaying sequence}
	Let $a>1, b\geq 0$ and $\{x_t\}$ be a non-negative sequence which satisfies,
	\begin{align*}
	x_{t+1} \leq \left(1-\frac{a}{t} \right)x_t + \frac{b}{t^2},\qquad\text{for }t\geq t'>0.
	\end{align*}
	Then for all $t \geq t'$ we have, \[ x_t \leq \frac{m}{t},\]
	where $m:=\max \{t'x_{t'}, b/(a-1)\}$.
\end{lemma}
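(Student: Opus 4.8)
The plan is to prove the bound $x_t \le m/t$ for all integers $t \ge t'$ by ordinary induction on $t$, since the recursion expresses $x_{t+1}$ in terms of $x_t$ alone. The base case $t = t'$ is immediate from the definition of $m$: because $m \ge t' x_{t'}$ and $t' > 0$, dividing by $t'$ gives $x_{t'} \le m/t'$. For the inductive step I assume $x_t \le m/t$ and bound $x_{t+1}$ using the hypothesis of the lemma, namely $x_{t+1} \le (1-a/t)x_t + b/t^2$. Provided the coefficient $1 - a/t$ is nonnegative, I may substitute the inductive hypothesis into the first term to get
\[
x_{t+1} \le \Big(1 - \frac{a}{t}\Big)\frac{m}{t} + \frac{b}{t^2}
= \frac{m}{t} - \frac{am - b}{t^2}.
\]

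The conceptual crux is the choice of $m$, which is designed precisely to control the cross term $am - b$. Since $m \ge b/(a-1)$ and $a > 1$, we have $m(a-1) \ge b$, equivalently $am - b \ge m$. Substituting this lower bound for $am-b$ yields
\[
x_{t+1} \le \frac{m}{t} - \frac{m}{t^2} = \frac{m(t-1)}{t^2}.
\]
It then remains to verify $m(t-1)/t^2 \le m/(t+1)$, which is equivalent to $(t-1)(t+1) \le t^2$, i.e.\ $t^2 - 1 \le t^2$, and is therefore always true. This closes the induction, giving $x_{t+1} \le m/(t+1)$ and hence the claimed $1/t$ decay rate.

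The one delicate point — and the main obstacle to a fully general statement — is the sign of $1 - a/t$: the substitution of the inductive hypothesis into $(1-a/t)x_t$ preserves the upper bound only when $1 - a/t \ge 0$, i.e.\ $t \ge a$. This is harmless in the intended application, where $a = 1.5$ and the recursion \eqref{1overkeq} is invoked only for large $k$, so that $t \ge t' \ge 2 > a$ throughout. Even the lone borderline integer index is benign: for $a = 1.5$ and $t = 1$ one has $(1-a/t)x_t \le 0$ by nonnegativity of $x_t$, so $x_{t+1} \le b/t^2 = b$, while $m/(t+1) = m/2 \ge b$ because $m \ge b/(a-1) = 2b$. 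Thus the two ingredients that make the argument work are the algebraic bookkeeping identity $am - b \ge m$ coming from the definition of $m$, and the elementary inequality $(t-1)(t+1) \le t^2$ that makes the $1/t$ rate self-propagating.
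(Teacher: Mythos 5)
Your induction is correct, and it is in substance the same argument that underlies the paper's treatment: the paper does not prove this lemma at all, but cites it as Lemma 3 of \cite{rakhlin2012making} (proved there for $t'=1$ by precisely this kind of induction) and remarks that general $t'$ follows. Your write-up supplies the details the paper leaves to the reference: the base case from $m \ge t'x_{t'}$, the key inequality $am-b \ge m$ extracted from $m \ge b/(a-1)$, and the propagation step $m(t-1)/t^2 \le m/(t+1)$, which reduces to $(t-1)(t+1)\le t^2$. So there is no genuinely different route here; you have reconstructed the standard proof.

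The caveat you raise about the sign of $1-a/t$ deserves emphasis, because it is not a defect of your proof but of the lemma as stated: with no hypothesis relating $t'$ to $a$, the conclusion is simply false. Take $a=10$, $b=9$, $t'=1$, and $x_1=x_2=x_3=x_4=0$, $x_5=3/10$, $x_t=0$ for $t\ge 6$. This sequence is non-negative and satisfies the recursion for every $t\ge 1$: for $t\ne 4,5$ the right-hand side is non-negative since $x_t=0$; at $t=4$ it equals $9/16 \ge 3/10$; at $t=5$ it equals $-3/10+9/25=3/50 \ge 0$. Yet $m=\max\{0,\,9/9\}=1$ while $x_5=3/10 > m/5 = 1/5$. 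So the statement needs either $t'\ge a$ (the regime your induction covers) or $a\le 3/2$ together with your separate one-step bound $x_{t+1}\le b/t^2 \le m/(t+1)$. As you note, this is harmless for the paper: the lemma is applied to \eqref{1overkeq} with $a=1.5$ and only for ``large enough'' $k$, so both fixes apply there; but strictly speaking the missing hypothesis should be added to the statement.
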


This lemma is stated and proved for $t'=1$ in \cite[Lemma 3]{rakhlin2012making}, and the case of general $t'$ follows immediately.

We are almost ready to complete the proof of Theorem \ref{The: Opt conv}; all that is needed is to refine the convergence rate of $\bar \bw(k)$ to $x^*$. 
Now as a consequence of \eqref{1overkeq}  and Lemma \ref{lem: decaying sequence}, we may use the inequality $\E[|X|] \leq \sqrt{\E[X^2]}$ to obtain that
\begin{equation} \label{1oversqrt} \E[\Vert \bbw(k) - \bz^*\Vert] = \O_k \left( \frac{1}{\sqrt{k}} \right).
\end{equation}
Furthermore, 
by the finite support of $\mu k \bar \be(k)$, by Corollary \ref{cor: bounded k e}, we also have that
\begin{align} \label{eq: conve}
\E [\Vert \bbw(k)  - \bz^* - \frac{1}{\mu  k} \nabla F(\bbw(k)) - \bbe(k) \Vert ]= \O_k \left( \frac{1}{\sqrt{k}} \right).
\end{align}  

We now use these observations to provide a proof of our main result. 

\begin{proof}\textbf{of Theorem \ref{The: Opt conv}}
	Essentially, we rewrite the proof of Lemma \ref{lem: conv}, but now using the fact that $\E[\Vert \bbw(k) - \bz^*\Vert] = \O_k(1/\sqrt{k})$ from \eqref{1oversqrt}. This allows us to make two modification to the arguments of that lemma. 
	First, we can now replace \eqref{eq: middle} by 
	\begin{small}
		\begin{align} \label{eq: middle2}
		\E [2\Vert \boldsymbol \eta(k) \Vert \Vert \bbw(k) -  \bz^* - \frac{1}{\mu k} \nabla F(\bbw(k)) - \bbe(k) \Vert] \leq \frac{2B_\eta}{ k^2} \O_k \left( \frac{1}{\sqrt{k}} \right),
		\end{align}
	\end{small}
	where we used \eqref{eq: conve}.
	Second,  putting together \eqref{eq: last}, \eqref{eq: middle2}, and \eqref{eq: first}, we obtain:
	\begin{multline*}
	\E[\Vert \bbw (k+1) - \bz^*\Vert^2]  \leq \left( 1 - \frac{2}{k} + \frac{L^2}{\mu^2  k^2} \right) \E [\Vert \bbw(k) - \bz^*\Vert^2] \\+ \E[\Vert \bbe(k) \Vert^2] + \frac{B_\eta^2}{ k^4} + \frac{2B_\eta}{  k^2} \O_k \left( \frac{1}{\sqrt{k}} \right).
	\end{multline*}
	which, again using the fact that $\E[\Vert \bbw(k) - \bz^*\Vert^2]= \O_k(1/\sqrt{k})$, we simply rewrite as, 
	\begin{align*}
	\E[\Vert \bbw (k+1) - \bz^*\Vert^2]  &\leq \left( 1 - \frac{2}{k} \right) \E [\Vert \bbw(k) - \bz^*\Vert^2] + \E[\Vert \bbe(k) \Vert^2] + \O_k \left(\frac{1}{k^{2.5}} \right).
	\end{align*}
	
	To save space, let us define $a_k := \E [\Vert \bbw(k) - \bz^*\Vert^2]$. Multiplying both sides of relation above by $k^2$ we obtain, 
	\begin{align*}
		a_{k+1}k^2 \leq a_k \left(1 - \frac{2}{k} \right)k^2 + \E[\Vert \bbe(k) \Vert^2]k^2 + \O_k(k^{-0.5}).
	\end{align*}
	Note that,
	\[ \left(1 - \frac{2}{k}\right)k^2 = k^2 - 2k < (k-1)^2. \]
	Thus,
	\begin{align*}
	a_{k+1}k^2 \leq a_k(k-1)^2 + \E[\Vert \bbe(k) \Vert^2]k^2 + \O_k(k^{-0.5}).
	\end{align*}
	Summing the relation above for $k=0,\ldots,T$ implies,
	\[ a_{T+1} T^2 \leq \sum_{k=0}^{T} \E[\Vert \bbe(k) \Vert^2]k^2 + \O_T(T^{0.5}). \]
	Now, let us estimate the first term on the right hand side of relation above,
	\begin{align*}
		 \sum_{k=0}^{T} \E[\Vert \bbe(k) \Vert^2]k^2 \leq  \sum_{k=0}^{T} \sum_{i=1}^n \frac{\beta_i^2(k)}{n^2}\sigma_i^2 \tau_i(k) k^2 = \sum_{i=1}^{n} \frac{\sigma_i^2}{\mu^2}\sum_{k=0}^{T} \nu_i(k)^2\tau_{i}(k) + \O_T(T^{-1}),
	\end{align*}
	where we used Lemma \ref{lem: bound beta} in the last equality. Define $t_i(j)$ as the $j$'th time agent $i$ has woken up, and set $t_i(0)=-1$. Then we can rewrite the relation above as,
	\begin{align*}
		\sum_{k=0}^{T} \nu_i(k)^2 \tau_i(k) =  \sum_{j=1}^{t_i(j)\leq T} (t_i(j) - t_i(j-1))^2 \leq \sum_{j=1}^{t_i(j)\leq T} L_u (t_i(j) - t_i(j-1))\leq L_u (T+1).
	\end{align*}
	Combining relations above and then dividing both sides by $T^2$ we obtain,
	\begin{align}
		a_{T+1} \leq \frac{L_u \sigma^2}{\mu^2 T} + \O_T(T^{-1.5}). \label{eq: wbardecay}
	\end{align}
	
	We next argue that the same guarantee holds for every $\bz_i(k)$. Indeed, for each $i=1, \ldots, m,$
		\begin{eqnarray*}
			\Vert\bz_i(k) - \bz^*\Vert^2   & = & \Vert\bz_i(k) - \bar \bw(k) + \bar \bw(k) - \bz^* \Vert^2 \\ 
			& = & \Vert\bz_i(k) - \bar \bw(k)\Vert^2  + 2 \Vert\bz_i(k) - \bar \bw(k) \Vert \Vert \bar \bw(k) - \bz^*\Vert + \Vert\bar \bw(k) - \bz^*\Vert^2.
		\end{eqnarray*}
	Now from Corollary \ref{col: z-w decay}, we know that with probability one, $\Vert\bz_i(k) - \bar \bw(k)\Vert_2 = \O_k(1/k)$. Taking expectation of both sides and using \eqref{eq: wbardecay} along with the usual  bound $\E[|X|] \leq \sqrt{\E[X^2]}$, we have
	\[ \E[\Vert\bz_i(k) - \bz^*\Vert^2] = \O_k \left( \frac{1}{k^2} \right) + \O_k\left( \frac{1}{k^{1.5}} \right) + \E[\Vert\bar \bw(k) - \bz^*\Vert^2]. \]
	Putting this together with \eqref{eq: wbardecay}  completes the proof.
\end{proof}

\subsection{Time-varying graphs}  We remark that Theorems \ref{The: geometric push-sum}, \ref{The: z delta converegence general} and \ref{The: Opt conv} all extend verbatim to the case of time-varying graphs with no message losses. Indeed, only one problem appears in extending the proofs in this paper to time-varying graphs: a node $i$ may send a message to node $j$; that message will be lost; and afterwards node $i$ never sends anything to node $j$ again. In this case, Lemmas \ref{lemma:positive n-rows} and \ref{lem: Y lower bound} do not hold. Indeed, examining Lemma \ref{lem: Y lower bound}, we observe what can very well happen is that all of $\chi_i(k)$ and $\psi_i(k)$ are ``lost'' over time into messages that never arrive. However, as long as no messages are lost, the proofs in this paper extend to the time-varying case verbatim.
\as{ On a technical level, the results still hold if $\bu_{ij}^x(k) = {\bf 0}, u_{ij}^y(k)=0$ (virtual node $c_{ij} \in \V_A$ holds no lost message), when link $(i,j)$ is removed from the network at time $k$, and the graph $\G$ stays strongly connected (or $B$-connected, i.e., there exists a positive integer $B$ such that the union of every $B$ consecutive graphs is strongly connected).}

\subsection{On the bounds for delays, asynchrony, and message losses} \as{It is natural to what extent the assumption of finite upper bounds on delays, asynchrony, and message losses are really necessary. A natural example which falls outside our framework is a fixed graph $G$, where, at each time step, every link in $G$ appears with probability $1/2$. A more general model might involve a different probability $p_e$ of failure for each edge $e$. }

\as{We observe that our result can already handle this case in the following manner. For simplicity, let us stick with the scenario where every link appears with probability $1/2$. Then the probability that, after time $t$, some link has not appeared is at most $m (1/2)^t$, where $m$ is the number of edges in $G$. This implies that if we choose $B=O(\log(mnT))$, then with high probability, the sequence of graphs $G_1, \ldots, G_T$ is $B$-connected.} 

\as{Thus our theorem applies to this case, albeit at the expense of some logarithmic factors due to the choice of $B$. We remark that it is possible to get rid of these factors by directly analyzing the decrease in $E[||z(t) - z^*||_2^2]$ coming from the random choice of graph $G$. Since our arguments are already quite lengthy, we do not pursue this generalization here, and refer the reader to \cite{lobel2010distributed, srivastava2011distributed} where similar arguments have been made. }

\section{Numerical simulations}\label{sec: numerical}
\subsection{Setup}
In this section, we simulate the RASGP algorithm on two classes of graphs, namely, random \as{directed} graphs and \as{bidirectional} cycle graphs. The main objective function is chosen to be a strongly convex and smooth Support Vector Machine (SVM), i.e. $F(\boldsymbol \omega, \gamma) = \frac{1}{2}\left(\Vert \boldsymbol \omega \Vert^2 + \gamma^2\right) + \as{C_N}\sum_{j=1}^N h(b_j(\bA_j^{\top} \boldsymbol \omega + \gamma ))$ where $\boldsymbol\omega\in \R^{d-1}$ and $\gamma\in \R$ are the optimization variables, and $\bA_j \in \R^{d-1}, b_j \in \{ -1, +1\}$, $j=1,\ldots,N$, are the data points and their labels, respectively. \as{The coefficient $C_N\in \R$ penalizes the points outside of the soft margin. We set $C_N = c/N, c=500$ in our simulations, which depends on the total number of data points.}  Here, $h:\R \rightarrow \R$ is the smoothed hinge loss, initially introduced in \cite{rennie2005loss}, defined as follows:
\begin{align*}
h(\xi)=\begin{cases}
-0.5 -\xi, \qquad &\text{if } \xi<0,\\
0.5(1-\xi)^2, \qquad &\text{if } 0\leq \xi<1,\\
0, \qquad &\text{if } 1 \leq \xi.
\end{cases}
\end{align*}

To solve this problem in a distributed way, we suppose all data points are spread among agents. Hence, the local objective functions are $f_i(\boldsymbol\omega_i,\gamma_i)= \frac{1}{2n}\left(\Vert \boldsymbol\omega \Vert^2 + \gamma^2\right) + \as{C_N}\sum_{j\in D_i} h(b_j(\bA_j^{\top}\boldsymbol\omega + \gamma ))$, where $D_i \subset \{1,2,\ldots,N\}$ is an index set for data points of agent $i$ and $N$ is the total number of data points. \as{We choose the size of the dataset for each local function to be a constant ($|D_i| = 50$), thus $N=50n$.} It is easy to check that each $f_i$ has Lipschitz gradients and is strongly convex with $\mu_i=1/n$.

\as{We will compare our results with a centralized gradient descent algorithm, which updates every $L_u$ iterations using the step-size sequence $\alpha_c(k) = L_u/(\mu k)$, in the direction of the \emph{sum} of the gradients of all agents.}

To make gradient estimates stochastic, we add a uniformly distributed noise $\be_i \sim \mathbb{U}[-b/2, b/2]^d$ to the gradient estimates of each agent and $\be_c \sim \mathbb{U} [-\sqrt{n}b/2, \sqrt{n}b/2]^d$ to the gradient of the centralized gradient descent, where $\mathbb{U}[b_1, b_2]^d$ denotes the uniform distribution of size $d$ over the interval $[b_1,b_2)$, $b_1<b_2$.
Note that $\be_i$ and $\be_c$ are bounded and have zero mean and $\E [\Vert \be_i \Vert^2] = db^2 /12$ and $\E [\Vert \be_c \Vert^2] = ndb^2 /12$.
We set $b=\as{4}$ for all simulations.

Agents wake up with probability $P_w$ and links fail with probability $P_f$, unless they reach their maximum allowed value where the algorithm forces the agent to wake up or the link to work successfully. The link delays are chosen uniformly between $1$ to $L_{del}$.

\as{Each dataset $D_i$ is synthetically generated by picking $25$ data points around each of the centers $(1,1)$ and $(3,3)$ with multivariate normal distributions, labeled $-1$ and $+1$, respectively.}
In generating strongly connected random graphs, we pick each edge with a probability of \as{$0.5$} and then check if the resulting graph is strongly connected; if it isn't, we repeat the process. 
\as{Since the initial step-sizes for the distributed algorithm can be very large (e.g., $\alpha(1) = 50$ for $n=50$), to stabilize the algorithms, both algorithms are started with $k_0 = 100$. This wouldn't affect the asymptotic convergence performance. Moreover, the initial point of the centralized algorithm and all agents in RASGP are chosen as $\mathbf{1}_d$.}

Let us denote by $\hat{\bz}(k):= (1/n)\sum_{i=1}^n \bz_i(k)$ the average of $\bz$-values of non-virtual agents. Then, we define \as{\textit{optimization errors}} $E_{dist}:= \Vert \hat{\bz}(k) - \bz^* \Vert^2$ and 
$E_{c}(k) :=  \Vert \bx_c(k) - \bz^* \Vert^2$ for RASGP and Centralized stochastic gradient descent, respectively.

Since our performance guarantees are for the expectation of (squared) errors, for each network setting, we perform \as{up to $1000$
Monte-Carlo simulations and use their corresponding performance to estimate the average behavior of the algorithms}.
\as{Since accurately estimating the \textit{true} expected value requires an extremely large number of simulations, in order to alleviate the effect of spikes and high variance, we take the following steps. First a batch of simulations are performed and their average  is calculated. Next, to obtain a smoother plot, an average over every $100$ iterations is taken. And finally, the median of these outputs over all the batches is our estimate of the expected value.}

We report two figures for each setting: one including the errors $E_{dist}$ and $E_c$, and another one including $k \times E_{dist}$ and $k\times E_c$ to demonstrate the convergence rates.

\as{Finally, to study the non-asymptotic behavior of RASGP and its dependence on network size $n$, we have compared the performance of the centralized stochastic gradient descent and RASGP over a bidirectional cycle graph, with error variances of $n^2\hat\sigma^2$ and $\sigma_i^2 = \hat \sigma^2$, respectively. Then, we plot the ratio $E_c(k)/E_{dist}(k)$ over $n$, for different iterations $k$.}

\subsection{Results}
Our simulation results are consistent with our theoretical claims (due to the performance of centralized and decentralized methods growing closer over time) and show the achievement of an asymptotic network-independent convergence rate.

Fig. \ref{fig: res1} shows that when there is no link failure or delay and all agents wake up at every iteration ($L_s=2$), RASGP and centralized gradient descent have very similar performance.
When we allow links to have delays and failures (see Fig. \ref{fig: res2}), \as{as well as asynchronous updates (see Fig. \ref{fig: res3})}, it takes longer for RASGP to reach its asymptotic convergence rate.

\begin{figure}
	\centering
	\begin{subfigure}[b]{0.45\textwidth}
		\includegraphics[width=\textwidth]{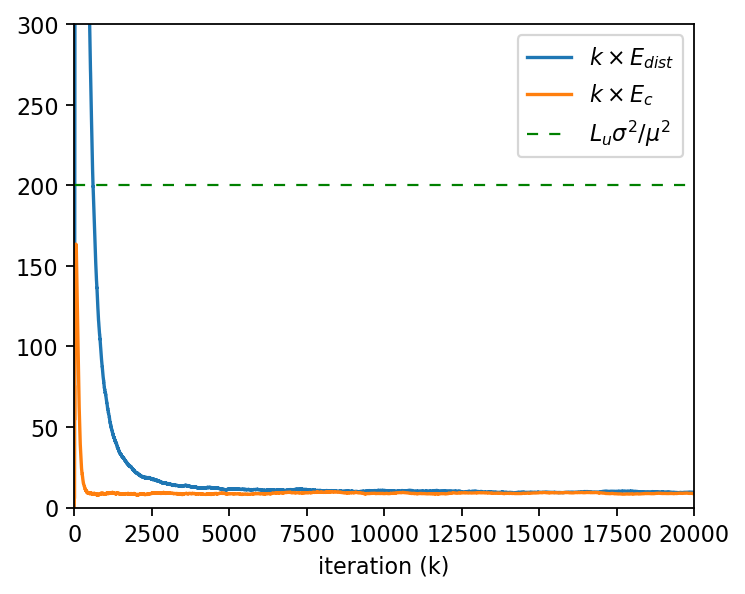}
		\caption{$k$ times squared errors.}
	\end{subfigure}
	\begin{subfigure}[b]{0.46\textwidth}
		\includegraphics[width=\textwidth]{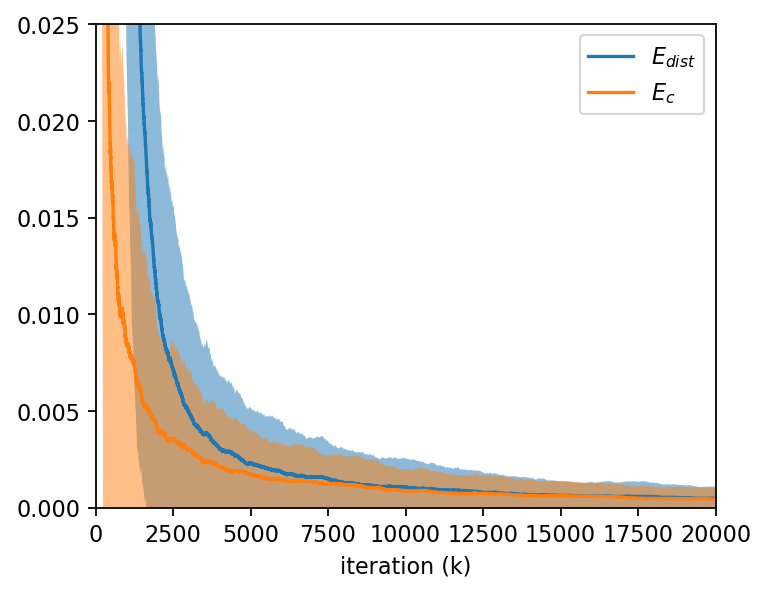}
		\caption{Errors and \as{ $1$-standard-deviation band}.}
	\end{subfigure}
	\caption{Results on a directed cycle graph \as{of size $n=50$}, synchronous with no delays and link failures ($P_w = 1$, $P_f=0$, $L_{del}=L_f =0, L_u=1$, $L_s = 2$).}
	\label{fig: res1}
\end{figure}

\begin{figure}
	\centering
	\begin{subfigure}[t]{0.45\textwidth}
		\includegraphics[width=\textwidth]{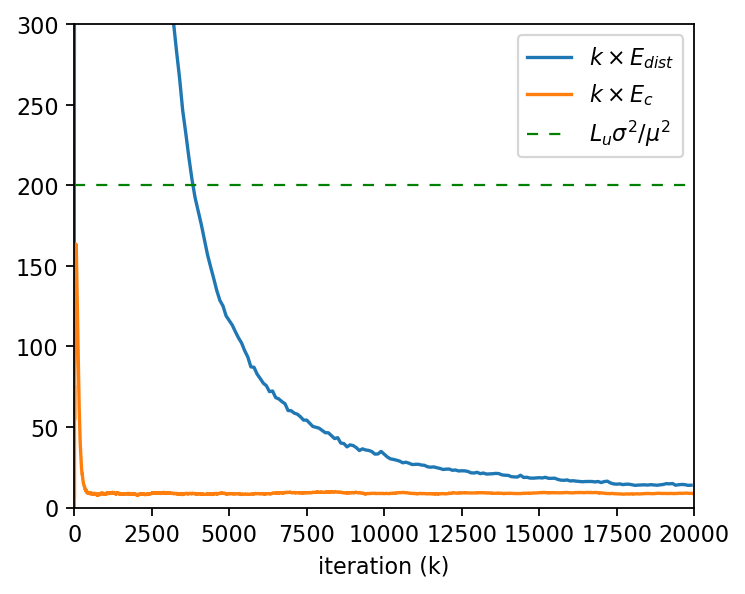}
		\caption{$k$ times squared errors.}
	\end{subfigure}
	\begin{subfigure}[t]{0.46\textwidth}
		\includegraphics[width=\textwidth]{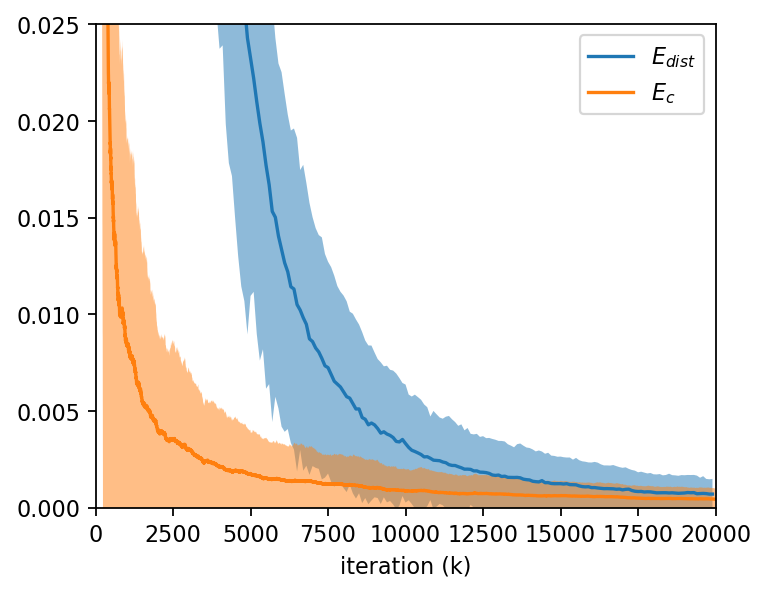}
		\caption{Errors \as{and $1$-standard-deviation band}.}
	\end{subfigure}
	\caption{Results on a directed cycle graph \as{of size $n=50$}, synchronous with delays and link failures ($P_w = 1$, $P_f=\as{0.3}$, $L_{del}=L_f =3, L_u=1$, $L_s = 7$).}
	\label{fig: res2}
\end{figure}

\begin{figure}[h]
	\centering
	\begin{subfigure}[b]{0.45\textwidth}
		\includegraphics[width=\textwidth]{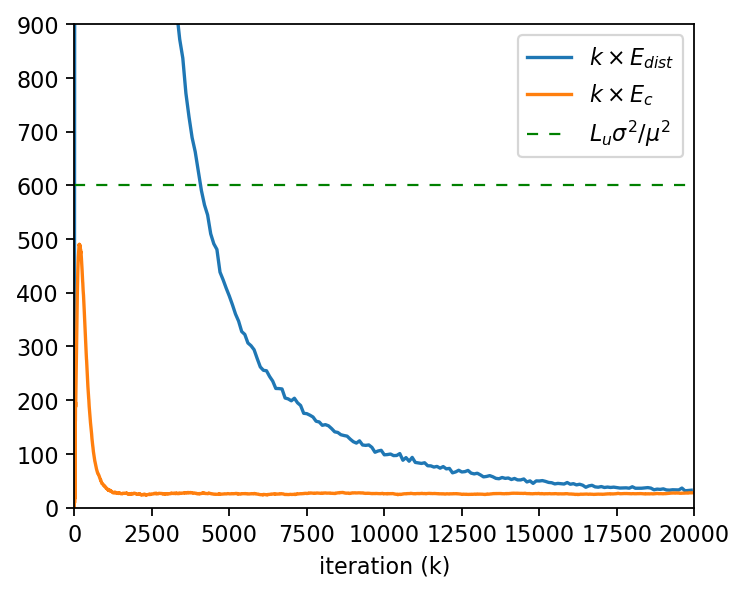}
		\caption{$k$ times squared errors.}
	\end{subfigure}
	\begin{subfigure}[b]{0.46\textwidth}
		\includegraphics[width=\textwidth]{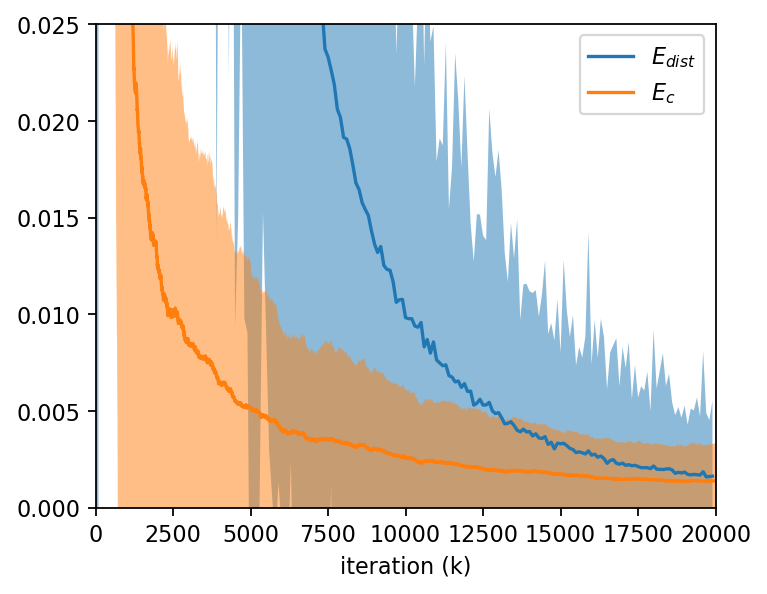}
		\caption{Errors \as{and $1$-standard-deviation band}.}
	\end{subfigure}
	\caption{Results on a directed cycle graph \as{of size $n=50$}, asynchronous with delays and link failures ($P_w = 0.5$, $P_f=\as{0.3}$, $L_{del}=L_f =3, L_u=3$, $L_s = 17$).}
	\label{fig: res3}
\end{figure}

We observe that, with all the other parameters fixed, the RASGP performs better on a random graph than on a cycle graph (see Figs. \ref{fig: res3} and \ref{fig: res4}). A possible reason is that the cycle graph has a higher diameter or mixing time compared to the random graph, resulting in a slower decay of the consensus error.
\begin{figure}
	\centering
	\begin{subfigure}[t]{0.45\textwidth}
		\includegraphics[width=\textwidth]{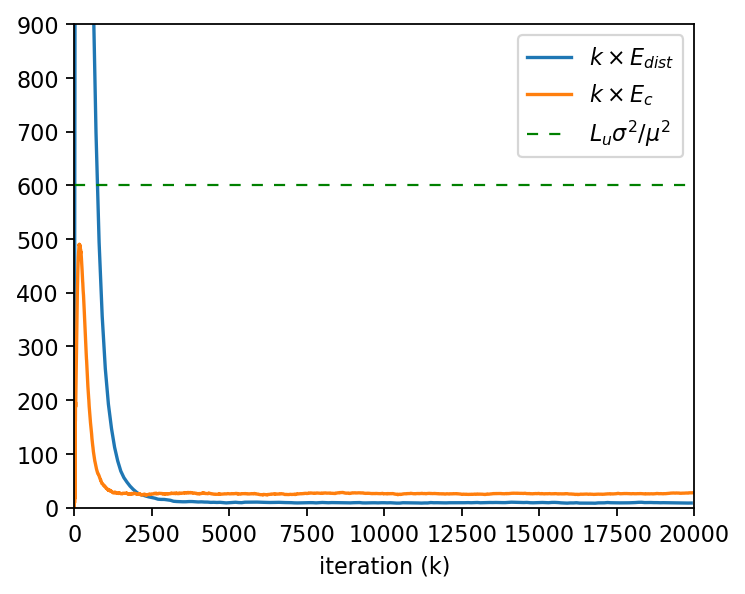}
		\caption{$k$ times squared errors.}
	\end{subfigure}
	\begin{subfigure}[t]{0.46\textwidth}
		\includegraphics[width=\textwidth]{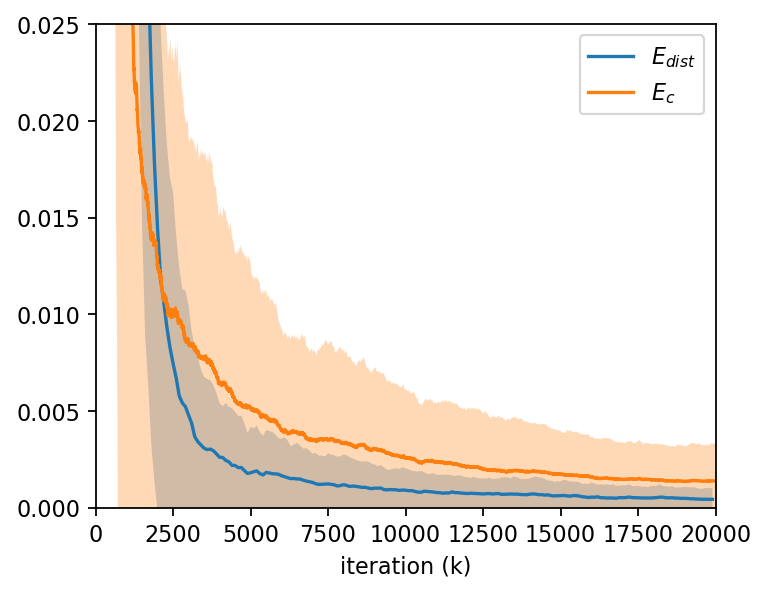}
		\caption{Errors \as{and $1$-standard-deviation band}.}
	\end{subfigure}
	\caption{Results on a directed random graph \as{of size $n=50$}, asynchronous with delays and link failures ($P_w = 0.5$, $P_f=\as{0.3}$, $L_{del}=L_f =3, L_u=3$, $L_s = 17$).}
	\label{fig: res4}
\end{figure}

\as{We notice that by fixing the network size, increasing the number of iterations brings us closer to linear speed-up (see Fig. \ref{fig: network size}). On the other hand, when fixing the number of iterations, increasing the number of nodes, after a certain point, does not help speeding up the optimization.
Moreover, by allowing link delays and failures (see Fig. \ref{fig: network size}(b)) we require more iterations to achieve network independence.}

\begin{figure}[h]
	\centering
	\begin{subfigure}[t]{0.45\textwidth}
		\includegraphics[width=\textwidth]{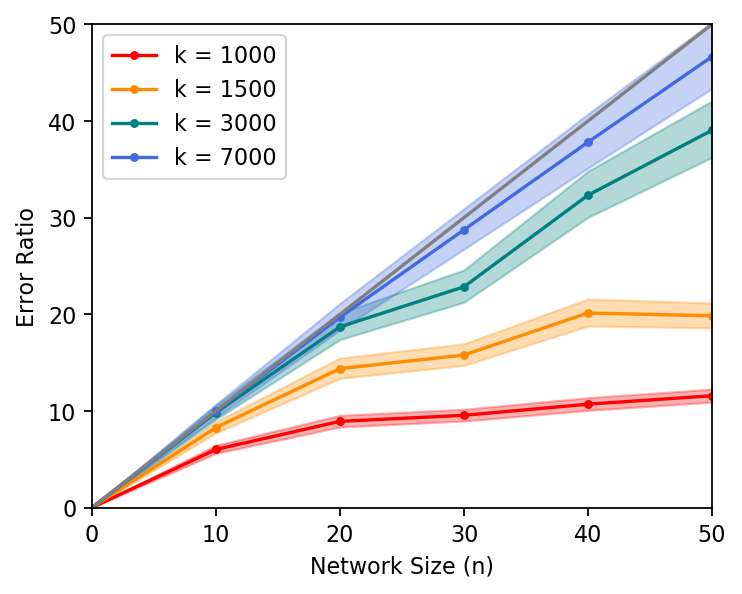}
		\caption{Synchronous with no delays and link failures.}
	\end{subfigure}
	\begin{subfigure}[t]{0.45\textwidth}
		\includegraphics[width=\textwidth]{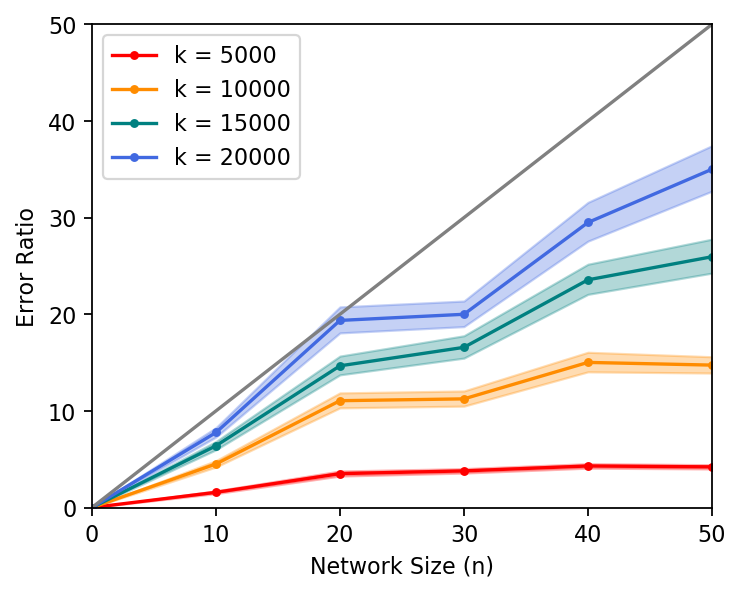}
		\caption{Synchronous with delays and link failure {($P_w = 1$, $P_f=0.3$, $L_{del}=L_f=3, L_u=1$, $L_s = 7$)}.}
	\end{subfigure}
	\caption{ Error ratio over network size. \as{Shaded areas correspond  to $1$-standard-deviation of the performance.}}
	\label{fig: network size}
\end{figure}

\section{Conclusions}\label{sec: conclusion}
The main result of this paper is to stablish asymptotically, network independent performance for a distributed stochastic optimization method over directed graphs with message losses, delays, and asynchronous updates. Our work raises several open questions. 

The most natural question raised by this paper concerns the size of the transients. How long must the nodes wait until the network-independent performance bound is achieved? The answer, of course, will depend on the network, but also on the number of nodes, the degree of asynchrony, and the delays. Understanding how this quantity scales is required before the algorithms presented in this work can be recommended to practitioners. 

More generally, it is interesting to ask which problems in distributed optimization can achieve network-independent performance, even asymptotically. For example, the usual bounds for distributed subgradient descent (see, e.g., \cite{nedic2018network}) depend on the spectral gap of the underlying network; various worst-case scalings with the number of nodes can be derived, and the final asymptotics are not network-independent. It is not immediately clear whether this is due to the analysis, or a fundamental limitation that will not be overcome.



\acks{The authors acknowledge support for this project by the AFOSR under grant FA9550-15-1-0394, by the ONR under grant N000014-16-1-224 and MURI N00014-19-1-2571, by the NSF under grants IIS-1914792, DMS-1664644, and CNS-1645681, and by the NIH under grant 1R01GM135930. A preliminary version of the results in Section \ref{sec: push-sum} has been published in the proceedings of the {\em American Control Conference} 2018 (\cite{olshevsky2018fully}).}


\newpage

\appendix
\section*{Appendix A. Proof of Lemma \ref{lem: rho u}}
\begin{proof}
	We use mathematical induction.
	For $k=0$ we have $x_{ij}^l(0)=0, \; \forall l$ and $u_{ij}^x(0)=\phi_i^x(0)=\rho_{ji}^x(0)=0$. By \eqref{eq: rho alg} and the definition of $u_{ij}^x$ and $x_{ij}^l$ we obtain,
	\begin{align*}
	\rho_{ji}^x(1) &=0,\\ 
	u_{ij}^x(1) &= (1-\sum_{l=1}^{L_d} \tau_{ij}^l(0))\phi_i^x(1),\\
	\sum_{l=1}^{L_d} x_{ij}^l(1) &= (\sum_{l=1}^{L_d} \tau_{ij}^l(0))\phi_i^x(1).
	\end{align*}
	Equation \eqref{eq: rho x1} is concluded from first equation above and \eqref{eq: u rho x} results by summing up all three equations above.
	
	Now assume this lemma is true for $k=0,\ldots,K-1$. We want to show it will be true for $k=K$ as well.
	In the following, $LHS$ and $RHS$ denote the left-hand-side and right-hand-side of \eqref{eq: rho x1} for $k=K$.
	By \eqref{eq: rho alg} we have,
	\begin{align*}
	LHS=\sum_{l=1}^{L_d} \tau_{ij}^l(K-l)[\phi_i^x(K+1-l)-\rho_{ji}^x(K)].
	\end{align*}
	Using \eqref{eq:x1} we obtain,
	\begin{align*}
	RHS= \sum_{l=1}^{L_d} \tau_{ij}^l(K-l)\mu_{ij}^x(K-l).
	\end{align*}
	Hence, it suffices to show that:
	\begin{align}\label{eq: equi1}
	\sum_{l=1}^{L_d} \tau_{ij}^l(K-l)[\phi_i^x(K+1-l) -
	\rho_{ji}^x(K)-\mu_{ij}^x(K-l)]=0.
	\end{align}
	By part (e) of Assumption \ref{asm: connvectivity}, at most one of the $\tau_{ij}^l(K-l)$, $l=1,\ldots,L_d$ is non-zero. If all are zeros, the result follows. 
	Now suppose $\tau_{ij}^l(K-l)=1$ for some $l$. Equation \eqref{eq: equi1} becomes, 
	\begin{align*}
	\phi_i^x(K+1-l)-\rho_{ji}^x(K)-\mu_{ij}^x(K-l)=0. 
	\end{align*}
	Plugging in the definition of $\mu_{ij}^x$, after rearrangement we obtain,
	\begin{align}\label{eq: equi3}
	\phi_i^x(K-l)-u_{ij}^x(K-l)=\rho_{ji}^x(K).
	\end{align}
	By the induction hypothesis, \eqref{eq: rho x1} holds for $k=K-t$, $t=1,\ldots,l$. Therefore,
	\begin{align*}
	\rho_{ji}^x(K+1-t)-\rho_{ji}^x(K-t)=x_{ij}^1(K-t).
	\end{align*}
	Hence,
	\begin{align*}
	\rho_{ji}^x(K)&=\rho_{ji}^x(K-l)+\sum_{t=1}^l(\rho_{ji}^x(K+1-t)-\rho_{ji}^x(K-t)) \\
	&=\rho_{ji}^x(K-l)+\sum_{t=1}^l x_{ij}^1(K-t)\\
	&=\rho_{ji}^x(K-l)+\sum_{l'=1}^l x_{ij}^{l'}(K-l)\qquad\text{(Lemma \ref{lem: x1=xl})}\\
	&=\rho_{ji}^x(K-l)+\sum_{l'=1}^d x_{ij}^{l'}(K-l).\qquad\text{(Lemma \ref{lem: xl>l=0})}
	\end{align*}
	Moreover, by the induction hypothesis, \eqref{eq: u rho x} holds for $k=K-l$, thus,
	\begin{align*}
	\phi_i^x(K-l)-u_{ij}^x(K-l)=\rho_{ji}^x(K-l)+\sum_{l'=1}^{L_d} x_{ij}^{l'}(K-l).
	\end{align*}
	Combining the two relations above we conclude \eqref{eq: equi3}.
	
	To show \eqref{eq: u rho x}, consider the following equations which are direct results of the definitions and \eqref{eq: rho x1} that we just showed for $k=K$:
	\begin{align*}
	u_{ij}^x(K+1) &= (1-\sum_{l=1}^{L_d} \tau_{ij}^l(K))\mu_{ij}^x(K),\\
	\rho_{ji}^x(K+1) &= \rho_{ji}^x(K) + x_{ij}^1(K),\\
	\sum_{l=1}^{L_d} x_{ij}^l(K+1) & = \sum_{l=2}^{L_d} x_{ij}^l(K) + \sum_{l=1}^{L_d} \tau_{ij}^l(K) \mu_{ij}^x(K).
	\end{align*}
	Summing up both sides of the equations above we have,
	\begin{align*}
	LHS &= u_{ij}^x(K+1)+ \rho_{ji}^x(K+1) + \sum_{l=1}^{L_d}x_{ij}^l (K+1),\\
	RHS &= \sum_{l=1}^{L_d} x_{ij}^l(K) + \rho_{ji}^x(K) + \mu_{ij}^x(K)\\
	& = \sum_{l=1}^{L_d} x_{ij}^l(K) + \rho_{ji}^x(K) + u_{ij}^x(K) - \phi_i^x(K) + \phi_i^x(K+1) = \phi_i^x(K+1).
	\end{align*}
	The last equality holds because of the induction hypothesis \eqref{eq: u rho x} for $k=K-1$, hence completing the proof.
\end{proof}


\vskip 0.2in
\bibliographystyle{natbib}
\bibliography{References}

\end{document}